\documentclass[11pt]{amsart}

\setcounter{tocdepth}{2}
\setcounter{secnumdepth}{4}

\usepackage[utf8]{inputenc}
\usepackage{amsmath, amssymb,amsthm,tikz}
\usepackage{tikz-cd}
\usepackage{tikz}
\usepackage{todonotes}
\usepackage{mathrsfs}
\usepackage{extarrows}
\usepackage{graphicx}
\usepackage{thmtools}
\usepackage{mathtools}
\usepackage{hyperref}
\usepackage{multirow}

\usepackage{nicematrix}
\usepackage{adjustbox}
\usepackage[margin=1.48in]{geometry}

\usepackage[english]{babel}
\usepackage{comment}

\usepackage[toc,page]{appendix}

\newcommand{\Z}{\mathbb{Z}}
\newcommand{\R}{\mathbb{R}}
\newcommand{\C}{\mathbb{C}}
\newcommand{\Q}{\mathbb{Q}}

\newcommand{\mcA}{\mathcal{A}}
\newcommand{\mcB}{\mathcal{B}}
\newcommand{\mcD}{\mathcal{D}}
\newcommand{\mcG}{\mathcal{G}}
\newcommand{\mcJ}{\mathcal{J}}
\newcommand{\mcM}{\mathcal{M}}
\newcommand{\mcS}{\mathcal{S}}
\newcommand{\bbG}{\mathbb{G}}
\newcommand{\bbM}{\mathbb{M}}

\newcommand{\bbN}{\mathbb{N}}

\newcommand{\bbbU}{{\mathbf{U}}}
\newcommand{\bbS}{\mathbb{S}}

\newcommand{\mcE}{\mathcal{E}}
\newcommand{\mcH}{\mathcal{H}}
\newcommand{\mcV}{\mathcal{V}}

\newcommand{\bA}{\mathbf{A}}

\newcommand{\cus}{\mathrm{c}}
\newcommand{\Oo}{\mathcal{O}}

\newcommand{\End}{\operatorname{End}}

\newcommand{\II}{\operatorname{I}}

\newcommand{\ind}{\operatorname{ind}}
\newcommand{\Ind}{\operatorname{Ind}}
\newcommand{\meas}{\operatorname{meas}}
\newcommand{\Mod}{\operatorname{Mod}}
\newcommand{\trace}{\operatorname{tr}}
\newcommand{\Frob}{\mathrm{Fr}}
\newcommand{\rU}{\mathrm{U}}

\newcommand{\bG}{\mathbf{G}}
\newcommand{\bM}{\mathbf{M}}

\newcommand{\bS}{\mathbf{S}}
\newcommand{\bT}{\mathbf{T}}

\newcommand{\ad}{\mathrm{ad}}

\newcommand{\sconn}{\mathrm{sc}}

\newcommand{\tame}{\operatorname{t}}

\newcommand{\aff}{\mathrm{aff}}

\newcommand{\Hom}{\operatorname{Hom}}

\newcommand{\cInd}{\operatorname{c-Ind}}
\newcommand{\Cent}{\operatorname{Z}}

\newcommand{\Nor}{\operatorname{N}}
\newcommand{\fR}{\mathfrak{R}}
\newcommand{\fB}{\mathfrak{B}}
\newcommand{\fgg}{\mathfrak{g}}

\newcommand{\Irr}{\operatorname{Irr}}

\newcommand{\fa}{\mathfrak{a}}
\newcommand{\fff}{\mathfrak{f}}
\newcommand{\fl}{\mathfrak{l}}
\newcommand{\fL}{\mathfrak{L}}
\newcommand{\fo}{\mathfrak{o}}
\newcommand{\fp}{\mathfrak{p}}
\newcommand{\fP}{\mathfrak{P}}
\newcommand{\fs}{\mathfrak{s}}
\newcommand{\fS}{\mathfrak{S}}

\newcommand{\fr}{\mathfrak{r}}
\newcommand{\fe}{\mathfrak{e}}
\newcommand{\fw}{\mathfrak{w}}
\newcommand{\fX}{{\mathfrak{X}}}

\newcommand{\bounded}{{\mathrm{b}}}
\newcommand{\der}{{\mathrm{der}}}
\newcommand{\nr}{{\mathrm{nr}}}
\newcommand{\red}{{\mathrm{red}}}
\newcommand{\scn}{{\mathrm{sc}}}
\newcommand{\enh}{{\mathrm{e}}}
\newcommand{\depth}{{\mathrm{depth}}}

\newcommand{\bbbM}{{\mathbf{M}}}
\newcommand{\bbbS}{{\mathbf{S}}}
\newcommand{\rZ}{{\mathrm{Z}}}

\newcommand{\bkappa}{{\boldsymbol{\kappa}}}

\newcommand{\rankkF}{{\mathrm{r}}}

\newcommand{\rA}{\operatorname{A}}
\newcommand{\Ad}{\operatorname{Ad}}
\newcommand{\rE}{\operatorname{E}}
\newcommand{\rF}{\operatorname{F}}
\newcommand{\Gal}{\operatorname{Gal}}
\newcommand{\sep}{\operatorname{sep}}
\newcommand{\GL}{\mathrm{GL}}

\newcommand{\SL}{\operatorname{SL}}

\newcommand{\rG}{\operatorname{G}}

\newcommand{\Sp}{\operatorname{Sp}}
\newcommand{\SO}{\operatorname{SO}}

\newcommand{\GSp}{\operatorname{GSp}}

\newcommand{\Mat}{\operatorname{Mat}}

\newcommand{\scusp}{\mathrm{sc}}
\newcommand{\cusp}{\mathrm{c}}
\newcommand{\val}{\mathrm{val}}
\hypersetup{
    colorlinks,
    citecolor=blue,
    filecolor=blue,
    linkcolor=blue,
    urlcolor=blue
}

\renewcommand{\tilde}{\widetilde}

\title[Hecke algebras for $p$-adic groups]{Hecke algebras for 
$p$-adic reductive groups and Local Langlands Correspondence for Bernstein blocks
}

\author{Anne-Marie Aubert}
\address{Sorbonne Universit\'e and Universit\'e Paris Cit\'e, CNRS,
IMJ-PRG, F-75005 Paris, France}
\email{anne-marie.aubert@imj-prg.fr}

\author{Yujie Xu}
\address{M.I.T., 77 Massachusetts Avenue,
Cambridge, MA, USA}
\email{yujiexu@mit.edu}

\address{}
\curraddr{}
\email{}
\date{\today}

\numberwithin{equation}{subsection}
\newtheorem{theorem}[equation]{Theorem}
\newtheorem{prop}[equation]{Proposition}
\newtheorem{thm}{Theorem}

\newtheorem{lem}[equation]{Lemma}
\newtheorem{Coro}[equation]{Corollary}

\newtheorem{property}[equation]{Property}
\theoremstyle{definition}
\newtheorem{Defn}[equation]{Definition}
\newtheorem{remark}[equation]{Remark}
\newtheorem{numberedparagraph}[equation]{}
\newtheorem{conj}[equation]{Conjecture}

\begin{document}

\maketitle
\begin{abstract}

We study the endomorphism algebras attached to Bernstein components of reductive $p$-adic groups and construct a local Langlands correspondence with the appropriate set of enhanced $L$-parameters, using certain ``desiderata'' properties for the LLC for supercuspidal representations of proper Levi subgroups. We give several applications of our LLC to various reductive groups with Bernstein blocks cuspidally supported on general linear groups. 

In particular, for Levi subgroups of maximal parabolic of the split exceptional group $\rG_2$, we compute the explicit weight functions for the corresponding Hecke algebras, and show that they satisfy a conjecture of Lusztig's. Some results from $\S \ref{sec:G2}$ are used by the same authors to construct a full local Langlands correspondence in \cite{AX-LLC}. 
Moreover, we also prove a reduction to depth zero case result for the Bernstein components attached to regular supercuspidal representations of Levi subgroups. 
\end{abstract}

\tableofcontents

\section{Introduction}
\addtocontents{toc}{\protect\setcounter{tocdepth}{0}}
\subsection{Background}

Let $F$ be a non-archimedean local field. Let $\bG$ be a connected reductive group defined over $F$, and $G$ its group of $F$-points.  
Let $M$ be the Levi subgroup of a parabolic subgroup $P$ of $G$.  

Let $\fs=[M,\sigma]_G$ be the inertial class attached to the pair $(M,\sigma)$, where $\sigma$ is a supercuspidal irreducible representation of $M$. Recall that this means that $\fs$ is the $G$-conjugacy class of $(M,\fX_\nr(M)\cdot\sigma)$, where $\fX_\nr(M)\cdot\sigma$ is the orbit of $\sigma$ under $\fX_\nr(M)$ --the group of unramified characters of $M$. Let $\fB(G)$ be the set of such $\fs$'s. 
We denote by $\Irr^\fs(G)$ the Bernstein series of irreducible representations of $G$ whose cuspidal support lies in $\fs$ (see $\mathsection$\ref{subsec:general} for a precise definition). 

Let $W_G^\fs$ denote the extended finite Weyl group $\Nor_G(\fs_M)/M$, where $\fs_M=[M,\sigma]_M$, 
and let $W_G^{\fs,x}$ be the stabilizer of $x\in \Irr^{\fs_M}(M)$ in $W_G^\fs$. 
By \cite{Solleveld-endomorphism-algebra}, there exists a collection $(\natural_x)_x$ of $2$-cocycles for $x\in \Irr^{\fs_M}(M)$, 
\begin{equation}
\natural_x\colon W_G^{\fs,x}\times W_G^{\fs,x}\longrightarrow\C^\times,
\end{equation}
such that we have a bijection
\begin{equation} \label{eqn:Sol}
 \xi_G^\fs\colon   \Irr^\fs(G) \longrightarrow (\Irr^{\fs_M}(M)/\!/W_G^\fs)_\natural,
\end{equation}
where $(\Irr^{\fs_M}(M)/\!/W_G^\fs)_\natural$ is a \textit{twisted extended quotient} in the sense of \cite[\S2.1]{ABPS-CM} (see \ref{eqn:teq} for the precise definition).

A parallel picture to (\ref{eqn:Sol}) exists on the Galois side. Let $W_F$ be the absolute Weil group of $F$ and $I_F$ its inertia subgroup. Let $M^\vee$ be the Langlands dual group of $M$, i.e.~it is a complex Lie group with root datum dual to that of $M$. It is equipped with an action of $W_F$, and we write ${}^LM:=M^{\vee}\rtimes W_F$. The group $M^\vee$ acts on the set of \textit{cuspidal} $M$-relevant enhanced $L$-parameters for $M$--a terminology based on Lusztig's notion of cuspidal pairs (see Definition~\ref{def:cuspidal} for more details). Let $\Phi_\enh^\cusp(M)$ be the set of $M^\vee$-conjugacy classes of cuspidal enhanced $L$-parameters for $M$.

Let $\rZ_{M^\vee\rtimes I_F}$ be the center of $M^\vee\rtimes I_F$. The group $\fX_\nr({}^LM):=(\rZ_{M^\vee\rtimes I_F})_{W_F}^\circ$, which is naturally isomorphic to the group $\fX_\nr(M)$ (see \cite[\S3.3.1]{Haines}), acts naturally on the set of cuspidal $M$-relevant enhanced $L$-parameters for $M$.
We denote by $\fs^\vee=[{}^LM,\varphi_\cus,\varrho_\cus]_{G^\vee}$
the $G^\vee$-conjugacy class of the orbit of $(\varphi_\cus,\varrho_\cus)\in \Phi_\enh^\cusp(M)$ under the action of 
$\fX_\nr({}^LM)$. Let $\fB^\vee(G)$ be the set of such $\fs^\vee$.
 
In \cite{AMS1}, the first author, with Moussaoui and Solleveld, constructed a partition--\`a la Bernstein--of the set $\Phi_\enh(G)$ of $G$-relevant enhanced Langlands parameters:
\begin{equation} \label{eqn:partitionAMS}
\Phi_\enh(G) =\bigsqcup_{\fs^\vee\in \fB^\vee(G)}\Phi_\enh^{\fs^\vee}(G),  
\end{equation}
where $\Phi_\enh^{\fs^{\vee}}(G)$ consists of enhanced Langlands parameters for $G$ whose cuspidal support lies in $\fs^{\vee}$. 
Let $M$ be a Levi subgroup of $G$ and let $\fs^\vee_M:=[{}^LM,\varphi_\cus,\varrho_\cus]_{M^\vee}\in \fB^\vee(M)$. Analogous to the group side $W_G^{\fs}$, we denote by $W^{\fs^\vee}_{G^\vee}$ the stabilizer of $\fs^\vee_M$ in $\Nor_{G^\vee}(M^\vee)/M^\vee$, and
by $W^{\fs^\vee,y}_{G^\vee}$ the stabilizer of $y\in\Phi_\enh^{\fs_M^\vee}(M)$ in $W^{\fs^\vee}_{G^\vee}$.
By \cite[Theorem~9.3]{AMS1}, there is a bijection
\begin{equation}
  \xi_{G^\vee}^{\fs^\vee}\colon\Phi_\enh^{\fs^\vee}(G)\longrightarrow (\Phi_\enh^{\fs_M^\vee}(M)/\!/W^{\fs^\vee}_{G^\vee})_{{}^L\natural},
\end{equation}
where the right-hand side $(\Phi_\enh^{\fs_M^\vee}(M)/\!/W^{\fs^\vee}_{G^\vee})_{{}^L\natural}$ is a twisted extended quotient with respect to a collection $({}^L\natural_y)_y$ of $2$-cocycles 
\begin{equation}
{}^L\natural_y\colon W^{\fs^\vee,y}_{G^\vee}\times W^{\fs^\vee,y}_{G^\vee}\to \C^\times.
\end{equation}

\subsection{Main Results}
Axiomatic setup: we suppose the existence of a map
\begin{equation}\label{introducing-LsM}
\begin{matrix}\fL^{\fs_M}\colon&\Irr^{\fs_M}(M)&\xlongrightarrow&\Phi_\enh^\cusp(M)\cr
    &\sigma&\mapsto &(\varphi_\sigma,\varrho_\sigma)
    \end{matrix}
\end{equation}
such that the following properties are satisfied for any $\sigma\in\Irr^{\fs_M}(M)$:
\begin{enumerate}
    \item[(1)] For any $\chi\in\fX_\nr(M)$, we have
    \[(\varphi_{\chi\otimes\sigma},\varrho_{\chi\otimes\sigma})=\chi^\vee\cdot(\varphi_\sigma,\varrho_\sigma),\]
    where $\chi\mapsto\chi^\vee$ is the canonical isomorphism $\fX_\nr(M)\overset{\sim}{\to}\fX_\nr({}^LM)$.
    \item[(2)]
    For any $w\in W(M)$, we have
 \[{}^{w^\vee}(\varphi_\sigma,\varrho_\sigma)\simeq (\varphi_{{}^w\sigma},\varrho_{{}^w\sigma}),\]
 where $w\mapsto w^\vee$ is the canonical isomorphism  $W(M)\overset{\sim}{\to}W(M^\vee)$.
    \end{enumerate}

We suppose that the collections of $2$-cocycles $\natural$ and ${}^L\natural$ satisfy the following
\begin{equation} \label{eqn:intro matching cocycles}
    {}^L\natural_{\chi^\vee}=\natural_{\chi}\quad\text{for any $\sigma\in\fs$ and any $\chi\in \fX_\nr(M)/\fX_\nr(M,\sigma)$}.
\end{equation}

We establish the following result.

\begin{thm} \label{thmIntro:matching-extended-qts} {\rm (Theorem~\ref{thm:matching-extended-qts})}
\begin{enumerate}
\item 
There is a natural isomorphism
\begin{equation}\fe\colon\Irr^{\fs_M}(M)/\!/W_G^\fs\overset{\sim}{\longrightarrow}\Phi_\enh^{\fs_M^\vee}(M)/\!/W^{\fs^\vee}_{G^\vee}.
\end{equation}
\item The map 
\begin{equation} \label{eqn:LLC}
\fL:=(\xi_{G^\vee}^{\fs^\vee})^{-1}\circ\fe\circ\xi_G^\fs\colon \Irr^\fs(G)\longrightarrow \Phi_\enh^{\fs^\vee}(G)
\end{equation}
is a bijection.
\end{enumerate}
\end{thm}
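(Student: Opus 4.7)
The plan is to reduce the theorem to a point-by-point comparison on the Bernstein components, using the listed axioms as formal inputs. Part~(2) is essentially automatic once (1) is established: $\xi_G^\fs$ is a bijection by \cite{Solleveld-endomorphism-algebra}, $\xi_{G^\vee}^{\fs^\vee}$ is a bijection by \cite[Theorem~9.3]{AMS1}, and once $\fe$ is constructed in (1) as a bijection, the composite $\fL = (\xi_{G^\vee}^{\fs^\vee})^{-1} \circ \fe \circ \xi_G^\fs$ is automatically a bijection. So the real content lies entirely in part~(1).

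For (1), I would first observe that $\fL^{\fs_M}$ restricts to a bijection $\Irr^{\fs_M}(M) \to \Phi_\enh^{\fs_M^\vee}(M)$. The source is, by definition of the inertial class, the single $\fX_\nr(M)$-orbit of $\sigma$, and the target is the single $\fX_\nr({}^LM)$-orbit of $\fL^{\fs_M}(\sigma) = (\varphi_\cus, \varrho_\cus)$. Axiom~(1) says $\fL^{\fs_M}$ intertwines these two actions via the canonical identification $\fX_\nr(M) \cong \fX_\nr({}^LM)$, whence the bijection on orbits. I would next match Weyl groups and pointwise stabilizers: for $w \in W(M)$, axioms~(1) and (2) together show that ${}^w\sigma$ lies in $\fX_\nr(M) \cdot \sigma$ if and only if ${}^{w^\vee}(\varphi_\sigma,\varrho_\sigma)$ lies in $\fX_\nr({}^LM)\cdot (\varphi_\sigma,\varrho_\sigma)$. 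Hence $W_G^\fs = W_{G^\vee}^{\fs^\vee}$ under the canonical isomorphism $W(M) \cong W(M^\vee)$, and the same argument applied at each point gives $W_G^{\fs, x} = W_{G^\vee}^{\fs^\vee, \fL^{\fs_M}(x)}$ for every $x \in \Irr^{\fs_M}(M)$.

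At this stage I would invoke the cocycle-matching hypothesis to identify $\C[W_G^{\fs, x}, \natural_x]$ with $\C[W_{G^\vee}^{\fs^\vee, y}, {}^L\natural_y]$ for $y = \fL^{\fs_M}(x)$, and therefore their sets of irreducible representations. The assignment $\fe\colon [(x, \tau)] \mapsto [(\fL^{\fs_M}(x), \tau)]$ is then well-defined on $W$-equivalence classes (by the intertwining established above) and a bijection of the two twisted extended quotients, proving~(1); composing the three bijections then proves~(2). The only non-trivial input in this framework is the cocycle-matching hypothesis itself; in the present axiomatic setup it is simply postulated, so the argument is formal, and the main work is deferred to verifying that hypothesis in concrete cases, e.g.\ by computing and comparing the explicit $q$-parameters of the affine Hecke algebras attached to both sides of the correspondence.
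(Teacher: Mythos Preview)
Your proposal is correct and follows essentially the same approach as the paper's own proof. The paper likewise identifies the two orbits via Property~(1), separates off the Weyl-group matching $W_G^\fs \cong W_{G^\vee}^{\fs^\vee}$ into a preceding lemma (proved exactly as you sketch, using Properties~(1) and~(2) together), then restricts to point stabilizers, invokes the cocycle-matching hypothesis \eqref{eqn:intro matching cocycles} to identify the twisted group algebras term by term in the extended quotients, and finally composes with $\xi_G^\fs$ and $(\xi_{G^\vee}^{\fs^\vee})^{-1}$ for part~(2); your write-up is in fact slightly more explicit in giving the formula $\fe\colon [(x,\tau)]\mapsto [(\fL^{\fs_M}(x),\tau)]$.
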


We suppose in the rest of this introduction that the group $\bG$ splits over a tamely ramified extension of $F$ and that the residual characteristic $p$ of $F$ does not divide the order of the Weyl group of $G$. Then there exists a compact mod center subgroup $\widetilde{K}_M$ of $M$ and an irreducible representation $\rho^d_M$ of it such that $\sigma=\ind_{\widetilde{K}_M}^{M}\rho^d_M$.

Let $\mcH^{\fs}(G)$ denote the endomorphism algebra of the Bernstein progenerator of $\fs$ (see (\ref{eqn:Hs})) and let $\mcH(G,\rho_{\mcD})$ be the intertwining algebra of an $\fs$-type $(K_\mcD,\rho_\mcD)$. We prove in Proposition~\ref{prop:comparisons} that the  algebras $\mcH^{\fs}(G)$ and  $\mcH(G,\rho_\mcD)$ are isomorphic. 

From now on, we suppose that $\sigma$ is \textit{regular} in the sense of \cite{Kal-reg}, which allows us to attach a supercuspidal Langlands parameter $\varphi_\sigma\colon W_F\to {}^LM$ to $\sigma$. Applying Theorem~\ref{thmIntro:matching-extended-qts} to the map $\fL^{\fs_M}\colon\sigma\mapsto (\varphi_\sigma,1)$ as in \eqref{introducing-LsM}, we obtain the following result:

\begin{prop} \label{prop:singleton}
When the $L$-packet of $\sigma$ is a singleton, the properties (1) and (2) are always satisfied.
\end{prop}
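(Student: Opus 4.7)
The plan is to reduce both properties to the well-documented compatibility of Kaletha's regular supercuspidal LLC with unramified twists and with $W(M)$-conjugation at the level of $L$-\emph{packets}, and then to exploit the singleton hypothesis to upgrade these packet-level compatibilities to pointwise equalities of enhanced parameters.

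Concretely, for property (1), I would first recall that for regular supercuspidals the assignment $\sigma\mapsto\varphi_\sigma$ (as constructed by Kaletha) fits into a bijection between $L$-packets $\Pi_{\varphi_\sigma}(M)$ and representations of the associated component group $S_{\varphi_\sigma}$, and satisfies the standard desideratum
\[
\Pi_{\chi^\vee\cdot\varphi_\sigma}(M)=\chi\otimes\Pi_{\varphi_\sigma}(M)
\quad\text{for every }\chi\in\fX_\nr(M).
\]
Since the component group $S_{\chi^\vee\cdot\varphi_\sigma}$ coincides with $S_{\varphi_\sigma}$, the singleton hypothesis on $\Pi_{\varphi_\sigma}(M)$ propagates: $\Pi_{\chi^\vee\cdot\varphi_\sigma}(M)=\{\chi\otimes\sigma\}$. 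Uniqueness of the parameter attached to $\chi\otimes\sigma$ then forces $\varphi_{\chi\otimes\sigma}=\chi^\vee\cdot\varphi_\sigma$ (up to $M^\vee$-conjugacy), while the fact that both packets are singletons forces $\varrho_{\chi\otimes\sigma}=\varrho_\sigma=1$. Since the $\fX_\nr({}^LM)$-action leaves the enhancement untouched, this is precisely property (1).

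For property (2), I would argue identically, using the analogous equivariance of Kaletha's construction under conjugation by $w\in W(M)=\Nor_G(M)/M$: one has $\Pi_{{}^{w^\vee}\varphi_\sigma}(M)={}^w\Pi_{\varphi_\sigma}(M)$, with component groups matched by conjugation. The singleton hypothesis on $\Pi_{\varphi_\sigma}(M)$ again forces $\Pi_{{}^{w^\vee}\varphi_\sigma}(M)=\{{}^w\sigma\}$, hence $\varphi_{{}^w\sigma}\simeq{}^{w^\vee}\varphi_\sigma$ and both enhancements are trivial, yielding (2).

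The only real obstacle is citing rather than re-deriving the two equivariance statements used above; both are part of the package of properties Kaletha establishes for the regular supercuspidal LLC (compatibility with unramified twists of $M$ and with rational automorphisms of $M$ inducing elements of $W(M)$). Once these are in hand, the singleton assumption removes all ambiguity: there are no enhancements to keep track of beyond the trivial one, and no choice in matching packet elements, so (1) and (2) reduce to tautologies.
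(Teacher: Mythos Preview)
Your proposal is correct and follows essentially the same logic as the paper: reduce to packet-level compatibility statements for Kaletha's construction and then use the singleton hypothesis to upgrade to equalities of enhanced parameters (with enhancement forced to be trivial).

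Two small differences are worth noting. First, for property~(1) the paper does \emph{not} use the singleton hypothesis at all: it proves the compatibility $\varphi_{\chi\otimes\sigma}=\chi^\vee\cdot\varphi_\sigma$ directly from the explicit shape of Kaletha's parameter (twisting $\theta$ by $\chi$ twists $\varphi_S$ by $\varphi_\chi$, while the $L$-embedding ${}^Lj$ is unchanged); this is stated as a separate proposition valid for all non-singular supercuspidals. Your route via the packet identity $\Pi_{\chi^\vee\cdot\varphi_\sigma}=\chi\otimes\Pi_{\varphi_\sigma}$ plus singleton is also fine, just slightly weaker. Second, for property~(2) the packet-level $W(M)$-equivariance you invoke is exactly Borel's Desideratum~10.3(5) for the isomorphism $c_g\colon M\to M$; the paper points to Bourgeois--Mezo (rather than to Kaletha's original papers) as the reference establishing this in the supercuspidal, quasi-split case, so you may want to sharpen your citation accordingly.
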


On the other hand, the construction of $\widetilde{K}_M$ involves notably a depth zero supercuspidal irreducible representation $\sigma^0$ of a Levi subgroup $M^0$ of a twisted Levi subgroup $G^0$ of $G$. We denote by $\fs^0=[M^0,\sigma^0]_{G^0}$ the inertial class of $\sigma^0$.

Suppose that $p$ is good for $G$ (in the sense of \cite{Carter}) and does not divide the order of the fundamental group of $\bG_\der$, and that the representation $\sigma$ is regular. 
\begin{thm} \label{thm:first} {\rm (Theorem~\ref{thm:xiGG0})} 
There is a bijection
\begin{equation}
\varrho^\fs_{\fs^0}\colon\Irr^\fs(G)\xlongrightarrow{}\Irr^{\fs^0}(G^0),
\end{equation}
which induces a bijection
\begin{equation}
\Irr(\mcH^\fs(G))\xlongrightarrow{}\Irr(\mcH^{\fs^0}(G^0))
\end{equation}
between the sets of equivalence classes of simple modules for the algebras $\mcH^\fs(G)$ and $\mcH^{\fs^0}(G^0)$.
\end{thm}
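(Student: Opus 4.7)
The plan is to factor the desired bijection through an isomorphism of Hecke algebras, then translate back to representations via the Bushnell--Kutzko theory of types. First I would invoke the Kim--Yu construction, which under the running hypotheses ($\bG$ tame, $p$ good, $p\nmid|\pi_1(\bG_\der)|$, $\sigma$ regular) packages the data defining $\sigma=\ind_{\widetilde{K}_M}^M\rho^d_M$ into an $\fs$-type $(K_\mcD,\rho_\mcD)$ in $G$, and analogously attaches a depth-zero $\fs^0$-type $(K^0_\mcD,\rho^0_\mcD)$ in $G^0$ to the pair $(M^0,\sigma^0)$. The formalism of types then gives bijections
\[
\Irr^\fs(G)\longleftrightarrow\Irr\bigl(\mcH(G,\rho_\mcD)\bigr),\qquad\Irr^{\fs^0}(G^0)\longleftrightarrow\Irr\bigl(\mcH(G^0,\rho^0_\mcD)\bigr).
\]

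The core input would be a support-preserving depth-zero Hecke algebra isomorphism
\[
\mcH(G,\rho_\mcD)\;\xrightarrow{\;\sim\;}\;\mcH(G^0,\rho^0_\mcD),
\]
which I would establish by comparing the Kim--Yu datum for $\sigma$ in $G$ to the corresponding depth-zero datum in $G^0$. The strategy is to show that the tame cuspidal component $\rho^d_M$ contributes only to the normalization of the generators in $\mcH(G,\rho_\mcD)$, while the affine root system of $G^0$ --- which governs the support --- coincides with the set of affine roots relevant to the Bernstein block $\fs$. The tameness, $p$-goodness, and regularity hypotheses should suffice to trivialize the Heisenberg extensions entering Yu's construction, pinning down the $q$-parameters and the $2$-cocycles $(\natural_x)_x$ on both sides and showing they match.

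Combining these steps, and using Proposition~\ref{prop:comparisons} to identify $\mcH^\fs(G)\cong\mcH(G,\rho_\mcD)$ (and similarly for $G^0$), the bijection $\varrho^\fs_{\fs^0}$ is defined as the composite
\[
\Irr^\fs(G)\xrightarrow{\sim}\Irr(\mcH^\fs(G))\xrightarrow{\sim}\Irr(\mcH^{\fs^0}(G^0))\xrightarrow{\sim}\Irr^{\fs^0}(G^0),
\]
and the induced bijection on simple modules of the endomorphism algebras is automatic from the construction. The main obstacle will be the Hecke algebra isomorphism in the middle step: one must track not merely the abstract algebra but also its parameter function and its $2$-cocycle datum, so that the result is genuinely compatible with the extended-quotient description \eqref{eqn:Sol}. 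Once this compatibility is secured in the regular, tame, $p$-good setting, the rest of the argument is formal.
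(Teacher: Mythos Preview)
Your proposal has a genuine gap at the ``core input'': the Hecke algebra isomorphism $\mcH(G,\rho_\mcD)\xrightarrow{\sim}\mcH(G^0,\rho^0_\mcD)$ that you want to establish is \emph{false in general}. The paper itself flags this immediately after the theorem (see the Remark following Theorem~\ref{thm:xiGG0}), citing an explicit counterexample for $G=\SL_n(F)$ from Goldberg--Roche. So no amount of tracking parameter functions and $2$-cocycles will produce such an isomorphism under the stated hypotheses; the algebras can genuinely differ, yet their simple modules are still in bijection. Your outline therefore proves something strictly stronger than what is claimed, and that stronger statement is not true.

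The paper's route is different and avoids this trap. Rather than comparing the algebras directly, it passes through the twisted extended quotients: Proposition~\ref{prop:bijectionsS} (Solleveld) gives bijections
\[
\xi_G^\fs\colon\Irr^\fs(G)\to(\Irr^{\fs_M}(M)/\!/W_G^\fs)_\natural,\qquad
\xi_{G^0}^{\fs^0}\colon\Irr^{\fs^0}(G^0)\to(\Irr^{\fs_{M^0}}(M^0)/\!/W_{G^0}^{\fs^0})_{\natural^0},
\]
and Lemma~\ref{lem:GG0} (Adler--Mishra) supplies the isomorphism $\fl_\sigma$ between these two extended quotients. The bijection $\varrho^\fs_{\fs^0}$ is then $(\xi_{G^0}^{\fs^0})^{-1}\circ\fl_\sigma\circ\xi_G^\fs$, and the statement about simple modules of $\mcH^\fs(G)$ and $\mcH^{\fs^0}(G^0)$ follows formally from Corollary~\ref{cor:two_algebras}. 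The point is that the extended-quotient parametrization is insensitive to the precise algebra structure (parameters, cocycles) that can distinguish $\mcH^\fs(G)$ from $\mcH^{\fs^0}(G^0)$; it only sees the combinatorics needed to index simple modules. Your approach via types and Proposition~\ref{prop:comparisons} is fine for translating between $\mcH^\fs(G)$ and $\mcH(G,\rho_\mcD)$, but the comparison between $G$ and $G^0$ must happen at the level of extended quotients, not at the level of algebras.
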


\smallskip

Theorem~\ref{thm:first} proves the validity of \cite[Conjecture~1.1]{Ad-Mi},  under the above assumption on $p$, for all regular supercuspidal representations of $M$. 
The bijection $\varrho^\fs_{\fs^0}$ is defined as
\begin{equation}
    \varrho^\fs_{\fs^0}:=(\xi_{G^0}^{\fs^0})^{-1}\circ\fl_\sigma\circ\xi_G^\fs,
\end{equation}
where $(\Irr^{{\fs_{M^0}}}(M^0)/\!/W_{G^0}^{\fs^0})_{\natural^0}$ is the twisted extended quotient with respect to a certain collection $\natural^0$ of $2$-cocycles, the definition of which is recalled in (\ref{eqn:cocycleG0}), and
\begin{equation}
\fl_\sigma\colon(\Irr^{\fs_M}(M)/\!/W_G^\fs)_\natural\longrightarrow (\Irr^{{\fs_{M^0}}}(M^0)/\!/W_{G^0}^{\fs^0})_{\natural^0}
\end{equation}
is the isomorphism constructed in \cite{Ad-Mi}.

\smallskip

In Section~\ref{sec:G2}, we study in greater detail the case when $G$ is the exceptional group of type $\rG_2$. Recall that for split $p$-adic groups, the principal series case,~i.e. $M=T$, is due to \cite{Roche-Hecke-algebra}, therefore it suffices to consider the cases where $M\simeq\GL_2(F)$ is a maximal Levi subgroup. The $\rG_2(F)$-covers of the supercuspidal types in $M$ were computed explicitly in \cite{Blondel} when $M$ corresponds to the long simple root of $\rG_2$, and in \cite{Deseine} when $M$ corresponds to the short simple root of $\rG_2$, but the intertwining algebras of these types were still unknown. We compute these intertwining algebras later in \S\ref{subsec:IA}, and in particular, by computing their parameters explicitly, we show that they satisfy a conjecture of Lusztig's in \cite[1.(a)]{Lusztig-Open-problems}.

\bigskip
\noindent

\medskip

\textbf{Acknowledgements.} 
The authors would like to thank Maarten Solleveld for valuable comments on a previous version of the manuscript. Y.X.~was supported by the National Science Foundation under Award No.~2202677 at MIT.

\bigskip
\noindent
\subsection{Notations and Definitions}
Let $F$ be a  non-archimedean local field. Let $\fo_F$ denote the ring of integers of $F$, $\fp_F$ the maximal ideal in $\fo_F$ and $k_F:=\fo_F/\fp_F$ the residue field of $F$. We assume that $k_F$ is finite and denote by  $q_F$ its cardinality. Let $\val_F\colon F\to \Z\cup\{\infty\}$ be a valuation of $F$ and let $\nu_F$ the character of $F^\times$ defined by $\nu_F(a):=q_F^{-\val_F(a)}$ for any $a\in F^\times$.

We fix a separable closure $F_{\sep}$ of $F$. Denote by $W_F\subset\Gal(F_{\sep}/F)$ the absolute Weil group of $F$ and $I_F$ its inertia subgroup.
We denote by $F_{\nr}$ the maximal unramified extension of $F$ inside $F_{\sep}$ and by $\Frob_F$ the element of $\Gal(F_{\nr}/F)$ that induces the automorphism $a\mapsto a^q$ on the residue field $\overline k_F$ of $F_{\nr}$. Then $W_F=I_F\rtimes\langle\Frob_F\rangle$. 
Let $I_F^+$ denote the wild inertia group of $F$ (i.e.~the maximal pro-$p$ open normal subgroup of  $I_F$). We have $I_F^{\tame}=\Gal(F_{\sep}/F_{\tame})\simeq I_F/I_F^+$, where $F_{\tame}$ is the tame closure of $F$ in $F_{\sep}$. The group $I_F^{\tame}$ is pro-cyclic and we denote by $\zeta_F$ a generator of it. Let $W_F':=W_F\times\SL_2(\C)$ be the Weil-Deligne group of $F$.

Let $\bG$ be a connected reductive algebraic group defined over $F$, and $G:=\bG(F)$ its $F$-rational points. 
We denote by $\bG_\der$ the derived group of $\bG$. Let $\bG_{\scn}$ (resp.~$\bG_{\ad}$) be the simply connected cover (resp.~adjoint quotient) of $\bG_\der$. Let $\rZ_\bG$ be the center of $\bG$, and $\bA_\bG$ the maximal $F$-split torus contained in $\rZ_\bG$.

\smallskip
Fix a maximal torus $\bT$ of $\bG$, and let $(X,R,Y,R^\vee)$ denote the root datum of $\bG$ with respect to $\bT$. Thus $X=X^*(\bT)$ is the character group of $\bT$, and $R\subset X$ is the set of weights of $\bT$ on the Lie algebra $\fgg$ of $\bG$.
Fix $\Delta\subset R$ a system of simple roots.
When $R$ is irreducible, the root with maximal height (with respect to $\Delta$) will be
denoted $\widetilde\alpha$. Write
$\widetilde\alpha=\sum_{\gamma\in\Delta} c_\gamma\gamma$
for positive integers $c_\gamma$. A prime number $p$ is said to be \textit{good for $\bG$} if it does not divide any $c_\gamma$. We may simply list the \textit{bad}, i.e.
not good, primes: $p=2$ is bad unless $R$ is of type $\rA$, $p=3$ is bad if $R$ is of type $\rG_2$, $\rF_4$, $\rE_n$, and $p=5$ is bad if $R$ is of type $\rE_8$.
The prime $p$ is good for a general $R$ just in case it is good for each irreducible
component of $R$.

\smallskip

Suppose that $H$ is a group, $H_1$ a subgroup of $H$ and $h$ an element of $H$. We set ${}^hH_1:=hH_1h^{-1}$. If $\pi$ is a representation of $H_1$, we denote by ${}^h\pi$ the representation $h_1\mapsto \pi(h^{-1}h_1h)$ of ${}^hH_1$.  
We denote by $\Irr(H)$ the set of of equivalence classes of irreducible representations of $H$.

The category of right modules over an algebra $\mcA$ is denoted $\mcA\!-\!\Mod$. We write $\Irr(\mcA)$ for the set of equivalence classes of simple modules of $\mcA$.

\subsubsection{Twisted extended quotients} \label{twisted-extended-quotient}
Let $\Gamma$ be a group acting on a topological space $X$ and let $\Gamma_x$ denote the stabilizer in $\Gamma$ of $x\in X$. Let $\natural=(\natural_x)_{x\in X}$ be a collection of $2$-cocycles 
\[\natural_x \colon  \Gamma_x \times \Gamma_x \to \C^\times, 
\]
such that $\natural_{\gamma x}$ and $\gamma_*\natural_x$ define the same class in $H^2(\Gamma_{\gamma x},\C^\times)$, where $\gamma_* \colon  \Gamma_x \to \Gamma_{\gamma x}$
sends $\alpha$ to $\gamma \alpha \gamma^{-1}$. 
Let $\C[\Gamma_x,\natural_x]$ be the group algebra of $\Gamma_x$ twisted by $\natural_x$.
We set
\[\widetilde X_\natural := \left\{(x,\tau) \,:\, \text{$x\in X$, $\tau\in \Irr\,\C[\Gamma_x, \natural_x]$} \right\},
\]
and topologize $\widetilde X_\natural$ by decreeing that a subset of $\widetilde X_\natural$ open if its projection to the first coordinate is open in $X$.

We  require, for every $(\gamma,x) \in \Gamma \times X$, an algebra isomorphism
\[
\phi_{\gamma,x} \colon  \C[\Gamma_x,\natural_x ]  \to \C[\Gamma_{\gamma x},\natural_{\gamma x}]
\]
satisfying the conditions
\begin{itemize}
\item[(a)] if $\gamma x=x$, then $\phi_{\gamma,x}$ is conjugation by an element of
$\C[\Gamma_x,\natural_x]^\times$;
\item[(b)] $\phi_{\gamma',\gamma x} \circ \phi_{\gamma,x} = 
\phi_{\gamma' \gamma,x}$ for all $\gamma',\gamma \in \Gamma$ and $x \in X$.
\end{itemize}
Define a $\Gamma$-action on $\widetilde X_\natural$ by
$\gamma \cdot (x,\tau) := (\gamma x, \tau \circ \phi_{\gamma,x}^{-1})$.
The \textit{spectral twisted extended quotient} of $X$ by $\Gamma$ with respect to $\natural$ is defined to be
\begin{equation} \label{eqn:teq}
(X/\!/ \Gamma)_\natural : = \widetilde{X}_\natural/\Gamma.
\end{equation}
In the case when the $2$-cocycles $\natural_x$ are trivial, we
write simply $X/\!/ \Gamma$ for $(X/\!/ \Gamma)_\natural$ and refer to it as the  \textit{spectral extended quotient} of $X$ by $\Gamma$. 
\addtocontents{toc}{\protect\setcounter{tocdepth}{2}}
\section{Hecke algebras and Bernstein Center} \label{sec:first}
\subsection{General framework} \label{subsec:general}
\begin{numberedparagraph}\label{defining-M1}
Let $\fR(G)$ denote the category of all smooth complex representations of $G$.
It is an abelian category admitting arbitrary coproducts. Let $M$ be a Levi subgroup of a parabolic subgroup $P$ of $G$. We denote by $M_1$ the subgroup of $M$ generated by all its compact subgroups. Recall that a character of $M$ is said to be unramified if it is trivial on $M_1$, and let $\fX_\nr(M)$ be the group of unramified characters of $M$. Let $\sigma$ be an irreducible supercuspidal smooth representation of $M$. We write $\fs:=[M,\sigma]_G$ for the $G$-conjugacy class of the pair $(M,\fX_\nr(M)\cdot\sigma)$, it is called a \textit{Bernstein inertial class}.  Let $\fB(G)$ denote the set of Bernstein inertial classes $\fs$. We set $\fs_M:=[M,\sigma]_M$.

We denote by $\fR^\fs(G)$ the full subcategory of $\fR(G)$ whose objects are  the representations $(\pi,V)$ such that every $G$-subquotient of $\pi$ is equivalent to a subquotient of a parabolically induced representation $i_P^G(\sigma')$, where $i_P^G$ is the functor of normalized parabolic induction  and $\sigma'\in\fX_\nr(M)\cdot\sigma$. We write $\Irr^\fs(G)$ for the class of irreducible objects in $\fR^\fs(G)$, i.e.~representations whose supercuspidal support lies in $\fs$.
\end{numberedparagraph}

\begin{numberedparagraph}
The categories $\fR^\fs(G)$ are indecomposable and split the full smooth category $\fR(G)$ in a direct product (see \cite[Proposition~2.10]{Ber84}):
\[\fR(G)=\prod_{\fs\in\fB(G)}\fR^\fs(G).\]
If $\Pi^\fs$ is a progenerator of $\fR^\fs(G)$, then the functor $V\mapsto \Hom_G(\Pi^\fs,V)$ is an equivalence from $\fR^\fs(G)$ to the algebra $\End_G(\Pi^\fs)$ (see for instance \cite[\S~1.1]{Roche-Bernstein-decomposition}).

Let $\fs=[M,\sigma]_G\in\fB(G)$ and  let $V$ be the underlying vector space for the supercuspidal representation $\sigma$ of $M$ and $\sigma_1$  an irreducible component of the restriction of $\sigma$ to $M_1$. We denote by $\ind_{M_1}^M$ the functor of compact induction. As noticed in \cite[\S~1.2]{Roche-Bernstein-decomposition}, the isomorphism class of 
\begin{equation} \label{eqn:cusp prog}
    \Pi_M^{\fs_M}:=\ind_{M_1}^M (\sigma_1)
\end{equation}
is independent of the choice of $\sigma_1$. 
It was shown by Bernstein that 
\begin{equation} \label{eqn:prog}
    \Pi_G^\fs:=i_P^G(\Pi_G^{\fs_M})
\end{equation}
is a progenerator of $\fR^\fs(G)$ (see \cite[\S 1.6]{Roche-Bernstein-decomposition}). We write
\begin{equation} \label{eqn:Hs}
    \mcH^\fs(G):=\End_G(\Pi_G^\fs).
\end{equation}
Hence we have an equivalence of categories of right modules
\begin{equation} \label{eqn:equiv3}
 \fR^\fs(G)\,\simeq\,
\mcH^\fs(G)-\Mod.
\end{equation}

Let $B:=\C[M/M_1]$ and $V_B:=V\otimes_\C B$. Then $i_P^G(V_B)$ is also a progenerator of $\fR^\fs(G)$, and  we have an equivalence of categories of right modules given by
\begin{equation} \label{eqn:equivE}
\begin{matrix}
\mcE\colon& \fR^\fs(G)&\longrightarrow&\End_G(i_P^G(V_B))\!-\!\Mod\cr
&\mcV&\mapsto &\Hom_G(i_P^G(V_B),\mcV)
\end{matrix}.
\end{equation}
\end{numberedparagraph}

\begin{numberedparagraph}
Consider
\begin{equation} \label{eqn:XnrMsigma}
\fX_{\nr}(M,\sigma):=\{\chi\in\fX_{\nr}(M)\,:\,\chi\otimes\sigma\simeq\sigma\},
\end{equation}
which is a finite subgroup of $\fX_{\nr}(M)$.

\begin{remark} \label{rem:size}
In the case where $M=\GL_n(F)$ with $n$ a positive integer, there is a simple type $(J,\lambda)$ in the sense of \cite[(5.5.10)]{BKbook} such that the restriction of the supercuspidal representation $\sigma$ to $J$ contains $\lambda$. The order of $\fX_{\nr}(M,\sigma)$ is $n/e(L|F)$, where $e(L|F)$ is the ramification index of the extension $L/F$ involved in the definition of $(J,\lambda)$ (see \cite[(6.0.1) and (6.2.5)]{BKbook}.
\end{remark}

We denote by $\Oo$ the orbit of $\sigma$ under the action of $\fX_\nr(M)$. The map $\chi\mapsto \chi\otimes\sigma$ defines a bijection  
\begin{equation} \label{eqn:IrrsM}
\fX_\nr(M)/\fX_\nr(M,\sigma)\xrightarrow{\sim}\Oo=\left\{\chi\otimes\sigma\,:\,\chi \in \fX_\nr(M)\right\}=\Irr^{\fs_M}(M).
\end{equation}

\smallskip

We set $W(M):=\Nor_{G}(M)/M$ and define  
\begin{equation} \label{eqn:Ws}
    W^\fs_G:=W(M,\Oo):=\left\{n\in \Nor_{G}(M)\,:\,{}^n\Oo\simeq \Oo\right\}/M.
\end{equation}

Recall that $\bA_M$ is the maximal split torus contained in the center of $\bM$. We denote by $\Sigma(A_M)\subset X^*(A_M)$ the set of nonzero weights occurring in the adjoint representation of $A_M$ on the Lie algebra of $G$, and by $\Sigma_\red(A_M)$ be the set of indivisible elements therein. (Recall that a root $\gamma$ in a root system $\Sigma$ is called \textit{indivisible} if $\frac{1}{2}\gamma\notin\Sigma$.)

For every $\gamma\in \Sigma_\red(A_M)$, let $M_\gamma\supset M$ 
denote the centralizer of $\ker \gamma$ in $G$ (it is a Levi subgroup of $G$ whose semisimple rank is one larger than that of $M$). 
Let $\mu^G$ be the \textit{Harish-Chandra $\mu$-function} for $G$ (see \cite[\S 1]{Silberger-79} or \cite[\S V.2]{Wal}). The restriction of $\mu^G$ to $\Oo$ is a rational $W(M,\Oo)$-invariant function on $\Oo$ \cite[Lemma V.2.1]{Wal}.
By \cite[Proposition1.3]{Heiermann-intertwining-operators-Hecke-algebras}, the set
\begin{equation} \label{eqn:Root System}
    \Sigma_{\Oo,\mu}:=
\left\{\gamma\in \Sigma_{\red}(A_M)\,:\,
\text{$\mu^{M_\gamma}$ has a zero on $\Oo$}\right\}
\end{equation}
is a root system. 
Let $W_{\Oo}$ denote the Weyl group of $\Sigma_{\Oo,\mu}$. 

Let $P=MN$ be a parabolic subgroup of $G$ with Levi factor $M$. Denote by $\Sigma(P)$ the subset of $\Sigma(A_M)$ of roots which act on the Lie algebra of $N$. Let $\Sigma_{\Oo,\mu}(P):=\Sigma_{\Oo,\mu}\cap\Sigma(P)$. By \cite[1.12]{Heiermann-intertwining-operators-Hecke-algebras}, the group $W(M,\Oo)$ decomposes as
\begin{equation} \label{eqn:WMO}
W(M,\Oo)=W_\Oo\rtimes R(\Oo),
\end{equation}
where 
\begin{equation} \label{eqn:RO}
R(\Oo):=\left\{w\in W(M,\Oo)\,:\, w(\Sigma_{\Oo,\mu}(P))=\Sigma_{\Oo,\mu}(P)\right\}.
\end{equation}
The action of every element $w$ of $W_G^\fs$ can be lifted to a transformation $\tilde w$ of $\fX_\nr(M)$. Let $W(M,\sigma,\fX_\nr(M))$ be the group of permutations of $\fX_\nr(M)$ generated by $\fX_\nr(M,\sigma)$ and the $\tilde w$'s. We have
\begin{equation} \label{eqn:WMX}
W(M,\sigma,\fX_\nr(M))/\fX_\nr(M,\sigma)\simeq W_G^\fs.
\end{equation}
Let $K(B):=\C(\fX_\nr(M))$ denote the quotient field of $B:=\C[\fX_\nr(M)]$. Let 
\[\C[W(M,\sigma,\fX_\nr(M)),\kappa]\] 
be the twisted group algebra of $W(M,\sigma,\fX_\nr(M))$ with basis elements $t_w$ that multiply as $t_wt_{w'} =\kappa(w,w')t_{ww'}$. 
By \cite[Corollary~5.8]{Solleveld-endomorphism-algebra}, there is a $2$-cocycle 
\begin{equation}
   \kappa\colon  W(M,\sigma,\fX_\nr(M))\times W(M,\sigma,\fX_\nr(M))\to \C^\times,
\end{equation}
such that we have an algebra isomorphism
\begin{equation}
K(B)\otimes_B \End_G(i_P^G(V_B)\simeq
K(B)\rtimes\C[W(M,\sigma,\fX_\nr(M)),\kappa].
\end{equation}
Here the symbol $\rtimes$ denotes the crossed product: as a vector space, it just means the tensor product, with multiplication rules determined by the action of $W(M,\sigma,\fX_\nr(M))$ on $K(B)$. Note that the cocycle 
$\kappa$ is trivial on $W_\Oo$.
\end{numberedparagraph}

\begin{remark} \label{rem:kappa trivial}
If $R(\Oo)$ has order at most $2$, the intertwining operators can be normalized such that the cocycle $\kappa$ is 
trivial (see \cite[Proposition~5.2 \& above Lemma~5.7]{Solleveld-endomorphism-algebra}).
This is indeed the case for $G=\rG_2(F)$.
\end{remark}

For any $\chi\in\fX_\nr(M)$, let $W_G^{\fs,\chi\otimes\sigma}$ denote the stabilizer of $\chi\otimes\sigma$ in $W^\fs_G$. 
Let $\natural_\chi$ be the $2$-cocycle denoted $\natural_{\sigma'}$ in \cite[(9.13)]{Solleveld-endomorphism-algebra}. 
Let $(\Irr^{\fs_M}(M)/\!/W_G^\fs)_\natural$ denote
the twisted extended quotient (as in $\mathsection$\ref{twisted-extended-quotient}) with respect to the collection $\natural$ of the $2$-cocycles $\natural_\chi$.

\begin{prop} \label{prop:bijectionsS}
There is a bijection 
\begin{equation} \label{eqn:xiG}
 \xi_G^\fs\colon   \Irr^\fs(G) \longrightarrow (\Irr^{\fs_M}(M)/\!/W_G^\fs)_\natural.
\end{equation}
\end{prop}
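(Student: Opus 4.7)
The plan is to combine the categorical equivalences already set up in this subsection with the structural description of the intertwining algebra provided by Solleveld and then invoke Clifford theory for a crossed-product algebra.

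First I would translate the problem on the left-hand side to an algebraic one: by (\ref{eqn:equiv3}) we have $\Irr^\fs(G) \leftrightarrow \Irr(\mcH^\fs(G))$, and since $\Pi_G^\fs = i_P^G(\Pi_M^{\fs_M})$ and $i_P^G(V_B)$ are both progenerators of $\fR^\fs(G)$, their endomorphism algebras are Morita equivalent; the equivalence $\mcE$ of (\ref{eqn:equivE}) identifies $\Irr(\mcH^\fs(G))$ with $\Irr(\End_G(i_P^G(V_B)))$. So it suffices to parametrize the simple modules of $\mcA := \End_G(i_P^G(V_B))$.

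Next I would use the Bernstein center: the subalgebra $B = \C[\fX_\nr(M)] \simeq \C[M/M_1]$ sits inside $\mcA$ as a polynomial subalgebra, so every simple $\mcA$-module has a central character in $\Spec B = \fX_\nr(M)$, determined only up to the action of $W(M,\sigma,\fX_\nr(M))$. Solleveld's Corollary 5.8 (cited above) provides, after localization, an algebra isomorphism
\[K(B)\otimes_B \mcA \;\simeq\; K(B)\rtimes\C[W(M,\sigma,\fX_\nr(M)),\kappa],\]
which reduces the classification of simples to a standard Clifford theoretic problem for a crossed product of a commutative algebra with a twisted group algebra.

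Clifford theory then proceeds as follows: fixing a character $\chi \in \fX_\nr(M)$, the simple modules of $\mcA$ with central character in the $W(M,\sigma,\fX_\nr(M))$-orbit of $\chi$ are in bijection with $\Irr\C[W(M,\sigma,\fX_\nr(M))_\chi,\kappa_\chi]$, where $\kappa_\chi$ is the restriction of $\kappa$ to the isotropy group. Under the quotient map $W(M,\sigma,\fX_\nr(M))/\fX_\nr(M,\sigma) \simeq W_G^\fs$ recorded in (\ref{eqn:WMX}), the isotropy subgroup at $\chi$ descends to $W_G^{\fs,\chi\otimes\sigma}$ and the induced cocycle is the $\natural_\chi$ of \cite[(9.13)]{Solleveld-endomorphism-algebra}. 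Passing to orbits under $W_G^\fs$ and using the identification (\ref{eqn:IrrsM}) of $\fX_\nr(M)/\fX_\nr(M,\sigma)$ with $\Irr^{\fs_M}(M)$, the resulting parametrizing set is exactly the twisted extended quotient $(\Irr^{\fs_M}(M)/\!/W_G^\fs)_\natural$ of $\mathsection\ref{twisted-extended-quotient}$, and the composition of all bijections yields $\xi_G^\fs$.

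The main obstacle is ensuring that every simple $\mcA$-module is captured by the localization $K(B)\otimes_B\mcA$ together with the right cocycle data; this is the crux of Solleveld's analysis, where one verifies that $\mcA$ is a finite type module over its center and that Clifford theory for $\C[W(M,\sigma,\fX_\nr(M)),\kappa]$ over $\C[\fX_\nr(M)]$ matches the Clifford theory for the subgroup $\fX_\nr(M,\sigma)$, pinning down the cocycles $\natural_\chi$. Once this identification is in place, the proof is a bookkeeping exercise through the chain of equivalences above.
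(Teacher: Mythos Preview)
Your proposal is essentially the same approach as the paper's: both pass through the equivalence $\mcE$ to $\Irr(\End_G(i_P^G(V_B)))$ and then invoke Solleveld's structural results to identify the simples with the twisted extended quotient. The paper simply cites \cite[Theorem~9.7]{Solleveld-endomorphism-algebra} for the bijection $\Irr(\End_G(i_P^G(V_B)))\leftrightarrow\Irr(\C[\fX_\nr(M)]\rtimes\C[W(M,\sigma,\fX_\nr(M)),\kappa])$ and \cite[Lemma~9.8]{Solleveld-endomorphism-algebra} for the identification with $(\Irr^{\fs_M}(M)/\!/W_G^\fs)_\natural$, whereas you unpack the Clifford-theoretic mechanism behind those results.

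One point of precision: you cite \cite[Corollary~5.8]{Solleveld-endomorphism-algebra}, which only gives the isomorphism after inverting $B$, i.e.\ over $K(B)$. Simple $\mcA$-modules are annihilated by maximal ideals of $B$ and do \emph{not} survive localization to $K(B)\otimes_B\mcA$, so the generic isomorphism by itself does not classify simples. What is actually needed is the finer statement of \cite[Theorem~9.7]{Solleveld-endomorphism-algebra}, which establishes the bijection at the level of irreducibles over $\C[\fX_\nr(M)]$ (not $K(B)$). You correctly flag this as ``the main obstacle'' and defer to Solleveld, so there is no real gap---but the correct reference is Theorem~9.7, not Corollary~5.8.
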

\begin{proof}
By \cite[Theorem~9.7]{Solleveld-endomorphism-algebra}, there are bijections
\[\Irr^\fs(G)\overset{\mcE}{\longleftrightarrow} \Irr(\End_G(i_P^G(V_B))\overset{\zeta}{\longleftrightarrow}\Irr(\C[\fX_\nr(M)]\rtimes\C[W(M,\sigma,\fX_\nr(M)),\kappa],\]
where $\mcE$ is induced by the equivalence of categories defined in (\ref{eqn:equivE}).
On the other hand, by \cite[Lemma~9.8]{Solleveld-endomorphism-algebra}, $\Irr(\C[\fX_\nr(M)]\rtimes\C[W(M,\sigma,\fX_\nr(M)),\kappa]$ is canonically isomorphic to  $(\Irr^{\fs_M}(M)/\!/W_G^\fs)_\natural$, where ${\fs_M}:=[M,\sigma]_M$. 
\end{proof}

\begin{Coro} \label{cor:two_algebras}
Let $\fs=[M,\sigma]_G\in\fB(G)$. There is a bijection
\begin{equation}
\Irr(\mcH^\fs(G))\xlongrightarrow{1-1}(\Irr^{\fs_M}(M)/\!/W_G^\fs)_\natural.
\end{equation}
\end{Coro}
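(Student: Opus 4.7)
The proof is a short composition of bijections already in hand. The plan is to pass from $\Irr(\mcH^\fs(G))$ to $\Irr^\fs(G)$ via the Bernstein equivalence, and then invoke Proposition~\ref{prop:bijectionsS} to land in the twisted extended quotient.

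More precisely, I would first recall that the progenerator $\Pi_G^\fs = i_P^G(\Pi_M^{\fs_M})$ gives, via the functor $V \mapsto \Hom_G(\Pi_G^\fs, V)$, the equivalence of categories
\[
\fR^\fs(G) \;\simeq\; \mcH^\fs(G)\!-\!\Mod
\]
recorded in (\ref{eqn:equiv3}). An equivalence of abelian categories sends simple objects to simple objects and induces a bijection on isomorphism classes of simples, so we obtain a canonical bijection
\[
\Irr^\fs(G) \;\xleftrightarrow{1\text{-}1}\; \Irr(\mcH^\fs(G)).
\]
Composing with the bijection $\xi_G^\fs$ of Proposition~\ref{prop:bijectionsS} then yields the claimed bijection
\[
\Irr(\mcH^\fs(G)) \;\xrightarrow{1\text{-}1}\; (\Irr^{\fs_M}(M)/\!/W_G^\fs)_\natural.
\]

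There is really no obstacle here: the content has already been absorbed into Proposition~\ref{prop:bijectionsS}, whose proof invokes \cite[Theorem~9.7 and Lemma~9.8]{Solleveld-endomorphism-algebra} to pass through the intermediate algebra $\End_G(i_P^G(V_B)) \cong \C[\fX_\nr(M)] \rtimes \C[W(M,\sigma,\fX_\nr(M)),\kappa]$. The only point worth making explicit, if desired, is the compatibility between the two progenerators $\Pi_G^\fs$ and $i_P^G(V_B)$: both are progenerators of $\fR^\fs(G)$, so their endomorphism algebras are Morita equivalent, hence have the same simple modules up to canonical bijection. This ensures that the bijection $\mcE$ used in the proof of Proposition~\ref{prop:bijectionsS} matches the bijection coming from $\Pi_G^\fs$, and the corollary follows immediately.
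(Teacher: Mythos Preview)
Your proof is correct and follows essentially the same approach as the paper: the paper's own proof reads in its entirety ``The result follows from the proof of Proposition~\ref{prop:bijectionsS} by using (\ref{eqn:equiv3}),'' which is exactly the composition you spell out. Your added remark on the Morita equivalence of the two progenerators $\Pi_G^\fs$ and $i_P^G(V_B)$ is a helpful clarification but not strictly needed, since invoking the statement (rather than the proof) of Proposition~\ref{prop:bijectionsS} together with (\ref{eqn:equiv3}) already suffices.
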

\begin{proof}
The result follows from the proof of Proposition~\ref{prop:bijectionsS} by using (\ref{eqn:equiv3}).
\end{proof}

\begin{remark} \label{rem:two_algebras}
As observed in \cite[(10.12)]{Solleveld-endomorphism-algebra}, if the restriction of $\sigma$ to $M_1$ is multiplicity free, we have
\begin{equation} \label{eqn:Morita}
\Pi_G^\fs=\left(i_P^G(V_B)\right)^{\fX_\nr(M,\sigma)}
\quad\text{and}\quad
\End_G(i_P^G(V_B))\simeq \mcH^\fs(G) \otimes_\C \Mat_{[M:M_\sigma]}(\C),
\end{equation}
where $\Mat_{[M:M_\sigma]}(\C)$ is the algebra of square matrices of size $[M:M_\sigma]$ (the index of $M_\sigma$ in $M$) with entries in $\C$.
Note that if $\sigma$ is generic, then its restriction to $M_1$ is multiplicity free (see \cite[Remark 1.6.1.3]{Roche-Bernstein-Center}). In particular, if $\sigma$ is a supercuspidal irreducible representation of a proper Levi subgroup $M$ of $\rG_2$, since $M$ is isomorphic to either $F^\times\times F^\times$ or $\GL_2(F)$, the representation $\sigma$ is generic, and hence its restriction to $M_1$ is multiplicity free.
\end{remark}

\subsubsection{Theory of types}
We fix a Haar measure on $G$. Let $\mcH(G)$ be the space of locally constant, compactly supported functions $f \colon G \to\C$ and view $\mcH(G)$ as a $\C$-algebra via convolution relative to the Haar measure. The algebra $\mcH(G)$ is called the \textit{Hecke algebra of $G$}. 

Let $(\rho,V_\rho)$ be a smooth representation of a compact open subgroup $K$ of $G$, and let $(\tilde\rho,V_{\tilde\rho})$ denote its contragredient. We define $\mcH(G,\rho)$ to be the space of compactly supported functions $f\colon G \to \End_G(V_{\tilde\rho})$ such that
\begin{equation} \label{eqn:Hecke}
f(kgk')=\tilde\rho(k)f(g)\tilde\rho(k'),\quad \text{where $k,k'\in K$ and $g\in G$.}\end{equation}
The convolution product gives $\mcH(G,\rho)$ the structure of a unitary associative $\C$-algebra. The algebra $\mcH(G,\rho)$ is called the \textit{$\tilde\rho$-spherical Hecke algebra} or the \textit{intertwining algebra of $(K,\rho)$}.

Let $e_\rho\in\mcH(G)$ be the function defined by
\begin{equation} \label{eqn:idempotent}
e_\rho(g):=\begin{cases} 
\frac{\dim\rho}{\meas(K)}\trace(\rho(g^{-1}))&\text{if $g\in K$},\cr0&\text{if $g\in G$, $g\notin K$.}
\end{cases}
\end{equation}
Then $e_\rho$ is idempotent, and $e_\rho\star\mcH(G)\star e_\rho$ is a sub-algebra of $\mcH(G)$ with unit $e_\rho$. 
By \cite[(2.12)]{Bushnell-Kutzko}, there is a canonical isomorphism 
\begin{equation}
\mcH(G,\rho)\otimes_\C\End_\C(V_\rho)\,\to\, e_\rho\star\mcH(G)\star e_\rho.
\end{equation}
The algebras $\mcH(G,\rho)$ and $e_\rho\star\mcH(G)\star e_\rho$ are therefore canonically Morita equivalent. Hence, we get an equivalence of categories: 
\begin{equation} \label{eqn:equiv1}
\mcH(G,\rho)-\Mod\,\simeq
\, e_\rho\star\mcH(G)\star e_\rho-\Mod .
\end{equation}
Let $\fR_\rho(G)$ be the full subcategory of $\fR(G)$ whose objects are those $V$ satisfying $V=\mcH(G)\star e_\rho\star V$, i.e.~$\fR_\rho(G)$ is generated over $G$ by the subspace $e_\rho\star V$.

\begin{Defn} \label{def:type}
(1) The pair $(K,\rho)$ is called an \textit{$\fs$-type for $G$} if the category $\fR_\rho(G)$ is closed under subquotients. \\
(2) A \textit{supercuspidal type for $G$} is an $\fs$-type where $\fs=[G,\sigma]_G$.
\end{Defn}

If $(K,\rho)$ is an $\fs$-type for $G$, then $\fR_\rho(G)=\fR^\fs(G)$ by \cite[(4.1)--(4.2)]{Bushnell-Kutzko}, where $\fR^\fs(G)$ is equivalent to the category of modules for $\mcH(G,\rho)$ by \cite[Theorem~3.5]{Bushnell-Kutzko}:
\begin{equation} \label{eqn:equiv2}
 \fR^\fs(G)\,\simeq\,
\mcH(G,\rho)\!-\,\Mod.
\end{equation}
Combining (\ref{eqn:equiv2}) and (\ref{eqn:equiv3}), we obtain an equivalence
\begin{equation}
  \mcH^\fs(G)\!-\!\Mod\,  \simeq  \,\mcH(G,\rho)\,-\,\Mod.
\end{equation}
Let $(K_M,\rho_M)$ be an $\fs_M$-type for $\fs_M\in\fB(M)$. If the pair $( K,\rho)$ is  a $G$-cover of $(K_M,\rho_M)$ as defined in \cite[Definition 8.1]{Bushnell-Kutzko}, then $K$ decomposes with respect to $M$ in the sense of \cite[Definition~6.1]{Bushnell-Kutzko} (in particular, $K_M= K\cap  M$  and $\rho_M = \rho|_{K_M }$) and the equivalence of categories (\ref{eqn:equiv2}) commutes with parabolic induction and parabolic restriction in the appropriate sense (see \cite[Corollary~8.4]{Bushnell-Kutzko}). 

\begin{prop} \label{prop:iso_progenerators}
Let $(K_M,\rho_M)$ be an $\fs_M$-type for $\fs_M\in\fB(M)$, such that $\Pi_M^{\fs_M}\simeq\cInd_{K_M}^M(\rho_M,V_{\rho_M})$. Let $(K,\rho)$ be a $G$-cover of $(K_M,\rho_M)$. 
Then 
\begin{equation}
\Pi_G^\fs\simeq\cInd_{K}^G(\rho,V_\rho).
\end{equation}
As a consequence, we have
\begin{equation} \label{eqn: iso HA}
  \mcH^\fs(G):=\End_G(\Pi_G^\fs) \simeq \mcH(G,\rho).
\end{equation}
\end{prop}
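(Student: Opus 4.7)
The strategy is to combine the definition of $\Pi_G^\fs$ as a parabolically induced representation with the characterization of a $G$-cover, in order to realize $\Pi_G^\fs$ directly as the compact induction $\cInd_K^G(\rho)$. Using the definition (\ref{eqn:prog}) of $\Pi_G^\fs$ together with the hypothesis $\Pi_M^{\fs_M} \simeq \cInd_{K_M}^M(\rho_M, V_{\rho_M})$, I get
\[
\Pi_G^\fs \;=\; i_P^G(\Pi_M^{\fs_M}) \;\simeq\; i_P^G\bigl(\cInd_{K_M}^M \rho_M\bigr).
\]
Thus it suffices to exhibit a $G$-equivariant isomorphism $i_P^G\bigl(\cInd_{K_M}^M \rho_M\bigr) \simeq \cInd_K^G(\rho)$.

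\medskip

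To prove the latter isomorphism, I would proceed via Yoneda: test both sides against an arbitrary $V \in \fR^\fs(G)$. Frobenius reciprocity for compact induction identifies $\Hom_G(\cInd_K^G \rho, V)$ with $\Hom_K(\rho, V|_K)$, while Bernstein's second adjoint theorem followed by Frobenius reciprocity identifies $\Hom_G\bigl(i_P^G(\cInd_{K_M}^M \rho_M), V\bigr)$ with $\Hom_{K_M}(\rho_M, r_{\bar P}^G V|_{K_M})$. The assertion that these two Hom-spaces agree naturally in $V$ is exactly the content of the $G$-cover property invoked in the excerpt: the Iwahori-style decomposition $K = (K \cap \bar N)(K \cap M)(K \cap N)$ with $\rho$ trivial on $K \cap N$ and $K \cap \bar N$ allows one to ``unfold'' $\Hom_K(\rho, V)$ through the Jacquet functor. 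This is the compatibility of the type equivalence with parabolic induction, namely \cite[Corollary~8.4]{Bushnell-Kutzko} already cited in the text just above the proposition. Applying Yoneda then yields the desired $G$-module isomorphism $\Pi_G^\fs \simeq \cInd_K^G(\rho)$.

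\medskip

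The second assertion (\ref{eqn: iso HA}) follows formally from the first: applying $\End_G(-)$ gives
\[
\mcH^\fs(G) \;=\; \End_G(\Pi_G^\fs) \;\simeq\; \End_G\bigl(\cInd_K^G \rho\bigr) \;\simeq\; \mcH(G, \rho),
\]
where the last isomorphism is the classical identification (Frobenius reciprocity plus Mackey decomposition) of the intertwining algebra of $(K,\rho)$ with the endomorphism algebra of its compact induction, cf.~\cite[(2.12)]{Bushnell-Kutzko}. The main obstacle is the middle step: upgrading the abstract Morita picture coming from the cover property to an honest $G$-equivariant isomorphism of the two progenerators genuinely requires the full strength of being a \emph{cover} (not merely a type), so the argument must invoke the Bushnell--Kutzko compatibility result rather than a purely formal manipulation with progenerators, which are only unique up to Morita equivalence.
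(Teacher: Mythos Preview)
Your argument is correct. The paper's own proof is a one-line citation to \cite[Lemma~B.3]{Bakic-Savin}, so you have in effect supplied the content that the authors outsourced. The Yoneda strategy you outline---Frobenius reciprocity for $\cInd_K^G$, Bernstein's second adjoint for $i_P^G$, and then the natural identification $\Hom_K(\rho,V)\simeq\Hom_{K_M}(\rho_M,r_{\bar P}^G V)$ coming from the cover axioms---is the standard route and is essentially what the cited lemma unpacks. Two small remarks: first, you should note explicitly that a $G$-cover of an $\fs_M$-type is an $\fs$-type (this is \cite[Theorem~8.3]{Bushnell-Kutzko}), so that $\cInd_K^G\rho$ genuinely lies in $\fR^\fs(G)$ and Yoneda within that block is legitimate; second, the cover property you need is with respect to $\bar P$, which is fine because the Bushnell--Kutzko definition of $G$-cover is uniform over all parabolics with Levi $M$.
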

\begin{proof}
See \cite[Lemma~B.3]{Bakic-Savin}. 
\end{proof}

\begin{numberedparagraph}
In this section, in order to be able to apply the constructions of \cite{Yu} and \cite{Kim-Yu}, we assume that $\bG$ splits over a tamely ramified extension of $F$, and that $p$ does not divide the order of the Weyl group of $G$. 
By a \textit{Levi subgroup of $\bG$}, we mean an $F$-subgroup of $\bG$ which is a Levi factor of a parabolic $F$-subgroup of $\bG$. Let $L/F$ be a finite extension. By a \textit{twisted $L$-Levi subgroup of $\bG$}, we mean an $F$-subgroup $\bG'$ of $\bG$ such that $\bG'\otimes_F L$ is a Levi subgroup of $\bG\otimes_F L$. If $L/F$ is tamely ramified, then $\bG'$ is called a \textit{tamely ramified twisted Levi subgroup of $\bG$}. A \textit{tamely ramified twisted Levi sequence in $\bG$} is a finite sequence $\vec \bG=(\bG^0,\bG^1,\cdots,\bG^d)$ of twisted $E$-Levi subgroups of $\bG$, with $E/F$ tamely ramified (see \cite[p~586]{Yu}). 

Let $\mcB(\bG,F)$ denote the (enlarged) building of $G$:
\begin{equation}
\mcB(\bG,F)=\mcB(\bG/\rZ_{\bG},F)\times X_*(\rZ_{G})\otimes_\Z\R,
\end{equation}
where $X_*(\rZ_G)$ is the set of $F$-algebraic cocharacters of $\rZ_G$.
Recall that when $\bG'$ is a tamely ramified twisted Levi subgroup of $\bG$, there is a family of natural embeddings of $\mcB(\bG',F)$ into $\mcB(\bG,F)$.

For $x$ a point in $\mcB(\bG,F)$, let $G_{x,0}$ denote the associated parahoric subgroup, and let $G_{x,0^+}$ denote the pro-$p$ unipotent radical of $G_{x,0}$. In general, for $r$ a positive real number, $G_{x,r}$ is the corresponding Moy-Prasad filtration subgroup of $G_{x,0}$. 

\begin{Defn} \cite[\S~7.1]{Kim-Yu} 
A \textit{depth-zero $G$-datum} is a triple 
\begin{equation} \label{depth-zero datum}
((\bG,\bM),(y,\iota),(K_M,\rho_M))
\end{equation}
satisfying the following
\begin{itemize}
\item $\bG$ is a connected reductive group over $F$, and $\bM$ is a Levi subgroup of $\bG$;
\item $y$ is a point in $\mcB(M)$ such that $M_{y,0}$ is a maximal parahoric subgroup of $M$, and $\iota\colon\mcB(M)\hookrightarrow\mcB(G)$ is a $0$-generic embedding relative to $y$ (see \cite[Definition~3.2]{Kim-Yu});
\item $K_M$ is a compact open subgroup of $M$ containing $M_{y,0}$ as a normal subgroup, and $\rho_M$ is an irreducible smooth representation of $K_M$ such that $\rho_M|M_{y,0}$ contains the inflation to $M_{y,0}$ of a cuspidal representation of $M_{y,0}/M_{y,0^+}$.
\end{itemize}
\end{Defn}
Let $\vec \bG=(\bG^0,\bG^1,\cdots,\bG^d)$ be a tamely ramified twisted Levi sequence in $\bG$.
To $\vec \bG$, we associate a sequence of Levi subgroups $\vec \bM=(\bM^0,\cdots,\bM^d)$, where $\bM^i$ is a Levi subgroup of $\bG^i$ given as the centralizer of $A_{\bM^0}$ in $\bG^i$, with $A_{\bM^0}$ the maximal $F$-split torus of the center $\rZ_{\bM^0}$ of $M^0$. 
\begin{Defn}
A \textit{$\bG$-datum} is a $5$-tuple 
\begin{equation} \label{eqn:Gdatum}
\mcD=((\vec \bG,\bM^0),(y,\{\iota\}),\vec r,(K_{M^0},\rho_{M^0}),\vec\phi)
\end{equation}
satisfying the following:

\begin{itemize}
\item[\bf D1.] $\vec \bG=(\bG^0,\bG^1,\cdots, \bG^d)$ is a tamely ramified twisted Levi sequence in $\bG$, and $\bM^0$ a 
Levi subgroup of $\bG^0$. Let $\vec \bM$ be associated to $\vec \bG$ as above;
\item[\bf D2.] $y$ is a point in $\mcB(M^0)$, and $\{\iota\}$ is a commutative diagram of $\vec s$-generic embeddings of buildings relative to $y$ in the sense of \cite[Definition~3.5]{Kim-Yu}, where $\vec s=(0,r_0/2,\cdots,r_{d-1}/2)$;
\item[\bf D3.] $\vec r=(r_0,r_1,\cdots,r_d)$ is a sequence of real numbers satisfying $0<r_0<r_1<\cdots<r_{d-1}\le r_d$ if $d>0$, and $0\le r_0$ if $d=0$;
\item[\bf D4.] $(K_{M^0},\rho_{M^0})$ is such that $\mcD^0:=((\bG^0,\bM^0),(y,\iota), (K_{M^0},\rho_{M^0}))$ is a depth zero $G^0$-datum;
\item[\bf D5.] $\vec\phi=(\phi_0,\phi_1,\cdots,\phi_d)$ is a sequence of quasi-characters, where $\phi_i$ is a quasi-character of $G^i$ such that $\phi_i$ is
$G^{i+1}$-generic of depth $r_i$ relative to $x$ for all $x\in\mcB(G^i)$ in the sense of \cite[\S~9]{Yu}.
\end{itemize}
\end{Defn}

\noindent
{\bf The construction.}
For a given $\bG$-datum $\mcD$ as in \eqref{eqn:Gdatum}, we write
\begin{equation} \label{eqn:KD0}
K_{\mcD^0}:=K_{M^0}G^0_{\iota(y),0}.
\end{equation}
We recall that $G^0_{\iota(y),0}$ is the parahoric subgroup of $G^0$ associate to the building point $\iota(y)$.  Let $G^0_{\iota(y),0+}$ be the pro-$p$ unipotent radical of $G^0_{\iota(y),0}$.

By \cite[Proposition~4.3(b)]{Kim-Yu}, we have
\begin{equation} \label{eqn:rqt}
    K_{\mcD^0}/G^0_{\iota(y),0+}\simeq K_{M^0}/M_{y,0+}^0,
\end{equation}
and we define $\rho_{\mcD^0}$ to be the representation of $K_{\mcD^0}$ obtained by composing the isomorphism (\ref{eqn:rqt}) with $\rho_{M^0}$.

\begin{Defn}
A \textit{Kim-Yu type} in the sense of \cite[\S7.4]{Kim-Yu}, which builds on earlier construction in \cite{Yu}, is a pair $(K_\mcD,\rho_\mcD)$ where 
\begin{itemize}
    \item $K_\mcD$ is an open compact subgroup given by
    \begin{equation} \label{eqn:KSigma}
K_\mcD:=K_{\mcD^0} G^1_{\iota(y),s_0}\,\cdots \,G^d_{\iota(y),s_{d-1}}
\end{equation}
\item $\rho_\mcD$ is an irreducible representation of $K_\mcD$.
\end{itemize}
\end{Defn}

To $\vec \bG$, we associate a tamely ramified twisted Levi sequence $\vec \bM=(\bM^0,\ldots, \bM^d)$ of $\bM$, where $\bM^i$ is the centralizer of $A_\bM$ in $\bG^i$. 
Consider 
\begin{equation} \label{eqn:Mdatum}
\mcD_M:=(\vec \bM,y,\vec r,\rho_{M^0},\vec\phi).
\end{equation}
When $K_{M^0}=M_y^0$ the datum  $\mcD_M$ gives a supercuspidal type in $M$ as follows.

Let $K^d_M:=K_\mcD\cap M$. 
Let $\widetilde{K}^d_{M}$ denote the normalizer in $M$ of $K_M^d$. This group $\widetilde{K}^d_{M}$ is a compact modulo center subgroup of $M$. Let $\rho^d_M:=\rho_\mcD|_{K_M^d}$ and consider
\begin{equation} \label{eqn:sigma}
\sigma_{\mcD_M}:=\ind_{\widetilde{K}_M}^{M}\rho^d_M.
\end{equation}
 
\begin{theorem} {\rm \cite{Kim-Yu}}
Suppose that $K_{M^0}=M_y^0$. Then 
\begin{itemize}
\item[\rm (1)]
$(K^d_M,\rho^d_M)$ is a supercuspidal type on $M$ (as in Definition~\ref{def:type}), and $\sigma_{\mcD_M}$ is an irreducible supercuspidal representation of $M$;
\item[\rm (2)]
$(K_\mcD,\rho_\mcD)$ is a $G$-cover of $(K^d_M,\rho^d_M)$, thus it is an $\fs$-type for $\fs=[M,\sigma_{\mcD_M}]_G$.
\end{itemize}
\end{theorem}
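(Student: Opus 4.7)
I would treat the two assertions separately, handling (1) by reducing to Yu's original construction applied to the $M$-datum $\mcD_M$, and handling (2) by verifying the Bushnell--Kutzko cover axioms in \cite[Definition~8.1]{Bushnell-Kutzko} for the pair $(K_\mcD,\rho_\mcD)$ relative to $(K_M^d,\rho_M^d)$.

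For (1), the first observation is that under the hypothesis $K_{M^0}=M_y^0$, the tuple $\mcD_M=(\vec\bM,y,\vec r,\rho_{M^0},\vec\phi)$ of \eqref{eqn:Mdatum} is a bona fide Yu datum for the group $\bM$: the twisted Levi sequence $\vec\bM=(\bM^0,\ldots,\bM^d)$ terminates at $\bM^d=\bM$, the point $y$ yields a vertex in $\mcB(\bM^0,F)$ (since $M_{y,0}^0$ is maximal parahoric), the genericity of each $\phi_i$ with respect to $\bG^i$ descends to $\bM^i$ by restriction, and $\rho_{M^0}|_{M_{y,0}^0}$ is inflated from a cuspidal representation of the finite reductive quotient. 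Yu's construction then produces the compact-mod-center subgroup $\widetilde K_M^d$ and the representation $\rho_M^d=\rho_\mcD|_{K_M^d}$, and Yu's main theorem guarantees that $\sigma_{\mcD_M}=\ind_{\widetilde K_M^d}^M\rho_M^d$ is irreducible supercuspidal. To conclude that $(K_M^d,\rho_M^d)$ is a supercuspidal type in the sense of Definition~\ref{def:type}, one shows that $\rho_M^d$ occurs only in twists of $\sigma_{\mcD_M}$ by unramified characters; this follows from Frobenius reciprocity together with Mackey decomposition and the fact that any irreducible subquotient $\pi$ of a representation containing $\rho_M^d$ must have cuspidal support in $\fs_M$, since the central character of $\rho_M^d$ detects the orbit $\fX_\nr(M)\cdot\sigma_{\mcD_M}$ up to $\fX_\nr(M,\sigma_{\mcD_M})$.

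For (2), the verification of the cover axioms reduces to three points. The identities $K_\mcD\cap M=K_M^d$ and $\rho_\mcD|_{K_M^d}=\rho_M^d$ are built into the construction (see \eqref{eqn:KSigma} and the definition of $\rho_M^d$ following \eqref{eqn:sigma}). The required Iwahori-type decomposition of $K_\mcD$ with respect to every parabolic $P=MN$ containing $M$ is a consequence of the concave-function interpretation of each Moy--Prasad factor $G^i_{\iota(y),s_{i-1}}$: each such factor admits its own Iwahori decomposition relative to $P\cap G^i$, and the resulting factorizations assemble compatibly with $\rho_\mcD$ being trivial on the unipotent radical pieces, using the $\vec s$-genericity of the embeddings $\iota$. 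The technical heart is the positivity/invertibility axiom: for a strongly $(P,K_\mcD)$-positive element $z\in\rZ_M$, one needs an invertible element of $\mcH(G,\rho_\mcD)$ supported on $K_\mcD z K_\mcD$. This is established in \cite{Kim-Yu} by induction on $d$: at depth zero it follows from the classical theory of covers of depth-zero types on parahorics, and the inductive step is an application of Yu's analysis of intertwining between $\rho_\mcD$ and its conjugates, which forces the relevant $\Hom$-space to be one-dimensional and the corresponding Hecke operator to be invertible.

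The main obstacle is this last positivity/invertibility axiom. All the group-theoretic compatibilities (decomposition, restriction of $\rho_\mcD$) are essentially formal consequences of the Moy--Prasad and Yu/Kim--Yu setup, but controlling the support of intertwiners for $\rho_\mcD$ across the tower $\vec\bG$ requires the full force of Yu's computations, together with a careful verification that the positivity condition with respect to $P$ in $G$ is preserved when passing through each twisted Levi $\bG^i$. Once this is in hand, \cite[(4.1)--(4.2)]{Bushnell-Kutzko} and \cite[Corollary~8.4]{Bushnell-Kutzko} yield both that $(K_\mcD,\rho_\mcD)$ is an $\fs$-type and that the resulting equivalence of categories is compatible with parabolic induction from $M$.
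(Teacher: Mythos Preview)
The paper does not give its own proof of this theorem: it is stated with the attribution \cite{Kim-Yu} and no proof environment follows. Your outline is an accurate summary of the Kim--Yu argument itself (Yu's main theorem for part~(1); verification of the Bushnell--Kutzko cover axioms, with the strongly positive/invertibility condition as the inductive crux, for part~(2)), so there is nothing to compare against.
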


\begin{prop} \label{equiv-types}
Let $\mcD$ and $\dot\mcD$ be two $G$-data
\[\mcD=((\vec \bG,\bM^0),(y,{\iota}),\vec r,(K_{M^0},\rho_{M^0}),\vec\phi)\;\;\text{and}\;\; \dot\mcD=((\vec{\dot\bG},\dot\bM^0),(\dot y,\dot{\iota}),\vec{\dot r},(K_{\dot M^0},\rho_{\dot M^0}),\vec{\dot{\phi}}\,)\]  such that $K_{M^0}=M^0_y$ and $K_{\dot M^0}=\dot M^0_{\dot y}$. Let
$\fs:=[M,\sigma_{\mcD_M}]_G$, and $\dot\fs=[M,\sigma_{\dot\mcD_M}]_G$.

Then  we have
$\fs=\dot\fs$ if and only if there exists $g\in G$ such that 
\begin{equation} \label{eqn:condition}
{}^gK_{M^0}=K_{\dot M^0}\quad\text{and}\quad {}^g(\rho_{M^0}\otimes\phi)\simeq \rho_{\dot M^0}\otimes\dot \phi,\end{equation}
where $\phi:=\prod_{i=0}^d(\phi_i|_{M^0})$ and $\dot\phi:=\prod_{i=0}^d(\dot\phi_i|_{M^0})$.
\end{prop}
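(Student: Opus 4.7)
The plan is to prove both implications by directly analyzing the Yu construction of the type $\rho^d_M$ and the compact induction $\sigma_{\mcD_M}=\ind_{\widetilde{K}_M}^{M}\rho^d_M$, combined with the refactorization rigidity (due to Hakim--Murnaghan in the tame case and refined in \cite{Kim-Yu}) that controls when two Yu-type data yield equivalent supercuspidal representations.

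For the ``if'' direction, suppose $g\in G$ satisfies (\ref{eqn:condition}). Unpacking the Yu construction applied to the $M$-datum $\mcD_M=(\vec{\bM},y,\vec r,\rho_{M^0},\vec\phi)$, the representation $\rho^d_M$ of $\widetilde{K}_M$ is built by inflating $\rho_{M^0}$ via the isomorphism (\ref{eqn:rqt}), tensoring against the appropriate extensions of the $\phi_i$'s to the intermediate Heisenberg-type subgroups, and then extending to the normalizer $\widetilde{K}_M$. The conjugation hypothesis thus transports the depth-zero and character ingredients of $\mcD$ onto those of $\dot\mcD$; the small discrepancy between $\phi_i|_{M}$ (used in the construction) and $\phi_i|_{M^0}$ (appearing in the condition) is an unramified character of $M$, since the two agree on $M_1$. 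Passing to compact induction, ${}^g\sigma_{\mcD_M}$ and $\sigma_{\dot\mcD_M}$ then differ at most by an unramified twist, whence $\fs=\dot\fs$.

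For the converse, suppose $\fs=\dot\fs$. There exist $g\in\Nor_G(M)$ and $\chi\in\fX_\nr(M)$ such that ${}^g\sigma_{\mcD_M}\simeq \chi\otimes\sigma_{\dot\mcD_M}$. Mackey's irreducibility criterion applied to the compact inductions forces ${}^g\widetilde{K}_M=\widetilde{K}_{\dot M}$ and ${}^g\rho^d_M\simeq (\chi|_{\widetilde{K}_M})\otimes\rho^d_{\dot M}$. I would then invoke refactorization rigidity for Yu data (see \cite[\S 3]{Kim-Yu} and the references therein) to translate this identification on $\widetilde{K}_M$ into an identification of the defining data. Since an unramified character restricts trivially to any parahoric subgroup, $\chi$ does not affect the depth-zero piece $\rho_{M^0}$; the twist is absorbed entirely into the product character $\phi$. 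Extracting the depth-zero and character pieces separately yields the required conditions ${}^gK_{M^0}=K_{\dot M^0}$ and ${}^g(\rho_{M^0}\otimes\phi)\simeq\rho_{\dot M^0}\otimes\dot\phi$.

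The principal obstacle lies in the ``only if'' direction: Yu data admit refactorization equivalences that can shuffle information among the $\phi_i$'s and the depth-zero $\rho_{M^0}$. One must carefully pin down canonical (e.g.\ Kaletha-style reduced) representatives so that the depth-zero piece $\rho_{M^0}$ is rigidly determined (after $G$-conjugation) by $\sigma_{\mcD_M}$, and verify that the unramified twist $\chi$ does not leak across the depth-zero/positive-depth boundary. Once this bookkeeping is in place, the isolation of $K_{M^0}$ and of $\rho_{M^0}\otimes\phi$ as independently conjugation-invariant data follows from the structure of the Yu type.
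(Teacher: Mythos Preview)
Your sketch misidentifies which direction is hard, and the ``if'' direction contains a genuine gap. The hypothesis (\ref{eqn:condition}) only constrains the depth-zero piece $\rho_{M^0}$ and the \emph{product} $\phi=\prod_i\phi_i|_{M^0}$; it says nothing about the individual characters $\phi_i$, the twisted Levi sequence $\vec\bG$, the depths $\vec r$, or even the integer $d$. Two $G$-data can have wildly different $(\vec\bG,\vec r,\vec\phi)$ while sharing the same pair $(K_{M^0},\rho_{M^0}\otimes\phi)$ --- this is precisely the phenomenon of refactorization. So you cannot ``transport the depth-zero and character ingredients of $\mcD$ onto those of $\dot\mcD$'' from the hypothesis alone: there is no reason the conjugated ingredients of $\mcD$ should resemble those of $\dot\mcD$. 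The fact that the resulting supercuspidals (and hence the inertial classes) nevertheless coincide is exactly the content of the Hakim--Murnaghan classification, packaged for types as \cite[Theorem~10.3]{Kim-Yu}; it is not something one can read off from the construction.

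The paper accordingly does not attempt a direct argument: it observes that under the assumption $K_{M^0}=M^0_y$, the equality $\fs=\dot\fs$ is equivalent to equivalence of the types $(K_\mcD,\rho_\mcD)$ and $(K_{\dot\mcD},\rho_{\dot\mcD})$ in the sense of \cite[Definition~10.1]{Kim-Yu}, and then invokes \cite[Theorem~10.3]{Kim-Yu} as a black box (noting that the auxiliary hypothesis $C(\vec\bG)$ from \cite{Hak-Mur} can be dropped by \cite[\S3.5]{Kal-reg}). Your ``only if'' direction is closer in spirit --- you do point to refactorization rigidity --- but the citation to \cite[\S3]{Kim-Yu} should be to \S10, and in any case you acknowledge the obstacle without resolving it. The correct proof here is a citation, not a reconstruction.
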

\begin{proof} It is a reformulation of \cite[Theorem~10.3]{Kim-Yu}. Indeed, when $K_{M^0}=M^0_y$ and $K_{\dot M^0}=\dot M^0_{\dot y, 0}$, we have $\fs=\dot\fs$ if and only if the types $(K_{\mcD},\rho_{\mcD})$ and $(K_{\dot\mcD},\rho_{\dot\mcD})$ are equivalent in the sense of \cite[Definition~10.1]{Kim-Yu}. 
Note that \cite[Theorems~10.2 and 10.3]{Kim-Yu} still hold without assuming the hypothesis $C(\vec\bG)$ of \cite[Remark~2.49 \& above]{Hak-Mur}, since \cite[\S3.5]{Kal-reg} shows that \cite[Theorems~6.6 and~6.7]{Hak-Mur} are valid without assuming $C(\vec\bG)$.
\end{proof}

\begin{remark} \label{remark:HM}
If $G=M$, it follows from \cite[Theorems~6.6 and~6.7]{Hak-Mur} that $\fs_M=\dot\fs_M$ if and only the data $\mcD_M=(\vec \bM,y,\vec r,\rho_{M^0},\vec\phi)$ and $\dot\mcD_M=(\vec \bM,\dot y,\vec {\dot r},\rho_{\dot M^0},\vec{\dot \phi})$ are equivalent in the sense of \cite[Definition~5.3]{Hak-Mur}.
\end{remark}
\begin{numberedparagraph}
For any $\fs=[M,\sigma]_G\in\fB(G)$, consider
\begin{equation} \label{eqn:Ns}
\Nor_G^\fs:=\left\{n\in \Nor_\bG(\bM)(F)\,:\,{}^n\sigma\simeq\chi\otimes\sigma\;\text{ for some $\chi\in\fX_\nr(M)$}\right\}.
\end{equation}

\begin{Coro} We suppose that $p$ does not divide the order of the Weyl group of $G$.
Let $\fs\in\fB(G)$ be an arbitrary Bernstein inertial class.
For every $n\in \Nor_G^\fs$, there exists an $m\in M$ such that
\[{}^{mn}K_{M^0}=K_{M^0}\quad\text{and} \quad {}^{mn}(\rho_{M^0}\otimes\phi)\simeq \rho_{M^0}\otimes\phi,\]
where $\phi:=\prod_{i=0}^d(\phi_i|_{M^0})$.
\end{Coro}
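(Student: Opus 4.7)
The plan is to apply Proposition~\ref{equiv-types}, specialized to the case $G=M$, to the pair of $M$-data $\mcD_M$ and ${}^n\mcD_M$, where ${}^n\mcD_M$ denotes the $M$-datum obtained by conjugating every component of $\mcD_M$ by $n$. Since $n\in\Nor_G^\fs\subset\Nor_\bG(\bM)(F)$ normalizes $\bM$, the conjugated sequence ${}^n\vec\bM=({}^n\bM^0,\ldots,{}^n\bM^d)$ is still a twisted Levi sequence in $\bM$ (though individual terms for $i<d$ may genuinely move), and the remaining components transport in the obvious way, so ${}^n\mcD_M$ is again an $M$-datum in the sense of \eqref{eqn:Mdatum}.

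First I would identify the supercuspidal representations of $M$ attached to these two data. Since $\sigma=\sigma_{\mcD_M}$, the functoriality of the Kim-Yu construction under conjugation gives $\sigma_{{}^n\mcD_M}\simeq{}^n\sigma$. By the very definition of $\Nor_G^\fs$, there exists $\chi\in\fX_\nr(M)$ with ${}^n\sigma\simeq\chi\otimes\sigma$, hence
\[[M,\sigma_{{}^n\mcD_M}]_M=[M,\chi\otimes\sigma]_M=[M,\sigma]_M=\fs_M,\]
because tensoring with an unramified character does not change the $M$-inertial class.

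The next step is to invoke Remark~\ref{remark:HM} (the reformulation of \cite[Theorems~6.6 \& 6.7]{Hak-Mur}, valid without the hypothesis $C(\vec\bG)$ thanks to \cite[\S3.5]{Kal-reg}): the equality of $M$-inertial classes just established is equivalent to the equivalence of $\mcD_M$ and ${}^n\mcD_M$ in the sense of \cite[Definition~5.3]{Hak-Mur}. Applying the $G=M$ specialization of Proposition~\ref{equiv-types} to this equivalence produces an element $m\in M$ with
\[{}^mK_{M^0}={}^nK_{M^0}\quad\text{and}\quad{}^m(\rho_{M^0}\otimes\phi)\simeq{}^n(\rho_{M^0}\otimes\phi).\]
Setting $m':=m^{-1}\in M$ immediately yields ${}^{m'n}K_{M^0}=K_{M^0}$ and ${}^{m'n}(\rho_{M^0}\otimes\phi)\simeq\rho_{M^0}\otimes\phi$, which is the desired conclusion.

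The main potential obstacle is the last bookkeeping step: one must verify that the $G=M$ version of Proposition~\ref{equiv-types} actually produces $m\in M$ with simultaneous control on both the compact subgroup $K_{M^0}$ and the auxiliary character $\phi=\prod_i\phi_i|_{M^0}$, rather than merely on the induced representation $\sigma$ itself. This amounts to tracking how Hakim--Murnaghan equivalence of $M$-data manifests, up to refactorization, as simultaneous conjugation of the compact subgroup and of the characters $\phi_i$; the required compatibilities are already implicit in \cite[\S5]{Hak-Mur}, but some care with refactorization conventions (and with the fact that $n$ permutes the intermediate twisted Levi subgroups $\bM^i$) is warranted.
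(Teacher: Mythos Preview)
Your proposal is correct and follows essentially the same route as the paper. The only difference is cosmetic: the paper introduces an auxiliary $M$-datum $\dot\mcD_M:=(\vec\bM,y,\vec r,\rho_{M^0}\otimes\chi|_{K_{M^0}},\vec\phi)$, checks that $\sigma_{\dot\mcD_M}\simeq\chi\otimes\sigma$, and then compares ${}^n\mcD_M$ with $\dot\mcD_M$ (rather than with $\mcD_M$ directly), finishing by noting that $\chi$ is unramified and hence trivial on $K_{M^0}$. Your direct comparison of ${}^n\mcD_M$ with $\mcD_M$ short-circuits this, since $[M,\chi\otimes\sigma]_M=[M,\sigma]_M$ already.

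The ``potential obstacle'' you raise is not one: the $G=M$ specialization of Proposition~\ref{equiv-types} is literally that proposition applied to the connected reductive group $M$ in place of $G$, so the conclusion already delivers $m\in M$ satisfying exactly the two conditions on $K_{M^0}$ and $\rho_{M^0}\otimes\phi$. No additional unpacking of Hakim--Murnaghan equivalence or refactorization is required beyond what is encapsulated in that proposition.
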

\begin{proof}
By \cite{Fintzen}, we have $\sigma=\sigma_{\mcD_M}$, for some $M$-datum $\mcD_M=(\vec \bM,y,\vec r,\rho_{M^0},\vec\phi)$.
Let $n\in \Nor_G^\fs$. Thus ${}^n\sigma\simeq\chi\otimes\sigma$ for some $\chi\in\fX_\nr(M)$. Let $\dot\mcD_M:=(\vec \bM,y,\vec r,\rho_{M^0}\otimes\chi|_{K_{M^0}},\vec{\phi})$.
By (\ref{eqn:sigma}), we have 
\begin{equation}
\chi\otimes\sigma=(\ind_{\widetilde{K}_M}^{M}\rho^d_M)\otimes\chi\simeq\ind_{\widetilde{K}_M}^{M}(\rho^d_M\otimes\chi|_{\widetilde{K}_M}).
\end{equation}
Since $\chi$ is unramified, we have $\ind_{\widetilde{K}_M}^{M}(\rho^d_M\otimes\chi|_{\widetilde{K}_M})=\sigma_{\dot\mcD_M}$. Therefore $\chi\otimes\sigma\simeq\sigma_{\dot\mcD_M}$. 
Applying Remark~\ref{remark:HM} to the $M$-data
${}^n\mcD_M$ and $\dot\mcD_M$, one can see that 
these data are equivalent. 
Hence there exists an $m\in M$ such that
\[{}^{mn}K_{M^0}=K_{M^0}\quad\text{and} \quad {}^{mn}(\rho_{M^0}\otimes\phi)\simeq \rho_{M^0}\otimes\chi|_{K_{M^0}}\otimes \phi=\rho_{M^0}\otimes\phi,\]
where the last equality holds because $\chi$ is trivial on $K_{M^0}$. Here $\phi:=\prod_{i=0}^d(\phi_i|_{M^0})$.
\end{proof}
\end{numberedparagraph}

\begin{numberedparagraph}
As shown in \cite{Hak-Mur}, it follows from the original construction in \cite{Yu} that $\rho^d_M$ is of the form $\rho_M^d=\rho_{M^0}\otimes\kappa$, where the representation $\kappa=\kappa_G$ depends only on $\vec\phi$. 
Suppose $K_{M^0}=M^0_y$. Let $\widetilde{K}_{M^0}$ denote the normalizer of $K_{M^0}$ in $M^0$. Consider
\begin{equation} \label{eqn:sigma0}
\sigma^0:=\ind_{\widetilde{K}_{M^0}}^{M^0}\rho_{M^0}.
\end{equation}
The representation 
$\sigma^0$ is a depth-zero irreducible supercuspidal representation of $M^0$. 
Let $\fs^0:=[M^0,\sigma^0]_{G^0}$. Let $\Pi_{G^0}^{\fs^0}$ be defined as in (\ref{eqn:prog}), i.e.~it is a progenerator for the category $\fR^{\fs^0}(G^0)$. Set $\mcH^{\fs^0}(G^0):=\End(\Pi_{G^0}^{\fs^0})$.

\begin{prop} \label{prop:comparisons} 
\begin{enumerate}
    \item The algebras $\mcH^{\fs^0}(G^0)$ and  $\mcH(G^0,\rho_{\mcD^0})$ are isomorphic.
    \item The algebras $\mcH^{\fs}(G)$ and  $\mcH(G,\rho_\mcD)$ are isomorphic.
\end{enumerate}
\end{prop}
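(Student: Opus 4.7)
The plan is to apply Proposition~\ref{prop:iso_progenerators} once for each part, with the pair $(M^0,G^0)$ for~(1) and $(M,G)$ for~(2). In each case three ingredients need to be supplied: (a) a supercuspidal $\fs_M$-type on the Levi, (b) a $G$-cover of it on the group, and (c) the identification $\Pi_M^{\fs_M}\simeq\cInd_{K_M}^M(\rho_M)$ of the Bernstein progenerator \eqref{eqn:cusp prog} with the compact induction of the Levi-side type.

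For (a) and (b): in part~(1), the depth-zero pair $(K_{M^0},\rho_{M^0})=(M^0_y,\rho_{M^0})$ is a supercuspidal $\fs^0_{M^0}$-type on $M^0$ by Moy--Prasad depth-zero theory, with $\sigma^0$ realized as a compact induction from its normalizer via \eqref{eqn:sigma0}; the Kim--Yu construction \eqref{eqn:KD0} together with the representation $\rho_{\mcD^0}$ defined immediately thereafter supplies the $G^0$-cover $(K_{\mcD^0},\rho_{\mcD^0})$. In part~(2), the Kim--Yu theorem stated just before Proposition~\ref{equiv-types} furnishes simultaneously a supercuspidal $\fs_M$-type $(K^d_M,\rho^d_M)$ on $M$, with $\sigma_{\mcD_M}$ realized as a compact induction from $\widetilde{K}_M$ as in \eqref{eqn:sigma}, together with its $G$-cover $(K_\mcD,\rho_\mcD)$.

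For (c) I would argue uniformly that $\Pi_M^{\fs_M}$ and $\cInd_{K_M}^M(\rho_M)$ are both progenerators of $\fR^{\fs_M}(M)$ and that each corresponds to the regular module over $\mcH(M,\rho_M)$ under the type-theoretic equivalence \eqref{eqn:equiv2}; hence they are isomorphic as $M$-representations. Proposition~\ref{prop:iso_progenerators} will then yield the desired isomorphisms $\mcH^{\fs^0}(G^0)\simeq\mcH(G^0,\rho_{\mcD^0})$ and $\mcH^{\fs}(G)\simeq\mcH(G,\rho_\mcD)$.

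The main obstacle lies precisely in the progenerator identification~(c): the Bernstein progenerator uses compact induction from $M_1$, whereas the type-theoretic one uses the typically much smaller $K_M$, so a direct match needs to be constructed rather than deduced abstractly (since progenerators of a block are not unique up to isomorphism in general). This is essentially the content of \cite[Lemma~B.3]{Bakic-Savin} invoked in Proposition~\ref{prop:iso_progenerators}, which I would either cite directly or, in the Kim--Yu supercuspidal type setting, adapt by tracking how $\widetilde{K}_M$, $K^d_M$ and $M_1$ interact in order to match the two inductions.
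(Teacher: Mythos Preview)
Your plan matches the paper's: verify the hypotheses of Proposition~\ref{prop:iso_progenerators} by citing Kim--Yu for the two covers and then establishing the Levi-side progenerator identification, exactly as you outline in~(a),~(b),~(c).

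One small correction on~(c). Your first route---arguing that both $\Pi_M^{\fs_M}$ and $\cInd_{K_M}^M(\rho_M)$ correspond to the regular $\mcH(M,\rho_M)$-module---does not work as stated: the second identification is tautological, but the first is exactly what is at stake, so this is circular. Your fallback is the right instinct, but note that \cite[Lemma~B.3]{Bakic-Savin} is the lemma \emph{behind} Proposition~\ref{prop:iso_progenerators}, which already assumes the Levi progenerator identification; it does not prove it. The paper instead applies (the proof of) \cite[Lemma~B.4]{Bakic-Savin}, whose input is the intertwining statement that $m\in M$ intertwines $\rho_M^d$ if and only if $m\in\widetilde{K}_M$ (and likewise for $M^0$); this holds precisely because $\sigma=\cInd_{\widetilde{K}_M}^M\rho_M^d$ and $\sigma^0=\cInd_{\widetilde{K}_{M^0}}^{M^0}\rho_{M^0}$ are irreducible supercuspidal. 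That intertwining criterion is the concrete content of your ``track how $\widetilde{K}_M$, $K^d_M$ and $M_1$ interact''.
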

\begin{proof} We verify the assumptions in Proposition~\ref{prop:iso_progenerators}. Firstly, $(K_{\mcD^0},\rho_{\mcD^0})$ is a $G^0$-cover of $(K_{M^0},\rho_{M^0})$ (see \cite[\S 7.1]{Kim-Yu}) and $(K_\mcD,\rho_\mcD)$ is a $G$-cover of $(K^d_M,\rho^d_M)$ (see \cite[Theorem~7.5]{Kim-Yu}). 

Secondly, since $\sigma^0$ and $\sigma$ are supercuspidal irreducible representations, an element $m^0$ of $M^0$ intertwines $\rho_{M^0}$ if and only if $m^0\in \widetilde{K}_{M^0}$, and an element $m$ of $M$ intertwines $\rho_M$ if and only if $m\in \widetilde{K}_{M}$. 
Then the proof of \cite[Lemma B.4]{Bakic-Savin} applies, and shows that $\Pi^{\fs_M}_M\simeq\cInd_{K_M}^M(\rho_M,V_{\rho_M})$. Then the result follows from Proposition~\ref{prop:iso_progenerators}.
\end{proof}

\begin{prop} \label{prop:trivial case}
If $W_G^\fs=\{1\}$, then there is an algebra isomorphism 
\[\mcH(G,\rho_\mcD)\simeq \mcH(M,\rho_M^d),\]
which preserves support of functions,
and the algebra $\mcH(G,\rho_\mcD)$ is commutative.
\end{prop}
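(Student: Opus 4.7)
The plan is to combine the Hecke algebra theory for $G$-covers with the support analysis afforded by the hypothesis $W_G^\fs = \{1\}$.

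First, I would invoke the fact that $(K_\mcD, \rho_\mcD)$ is a $G$-cover of $(K_M^d, \rho_M^d)$, which already played a role in Proposition~\ref{prop:comparisons}. By the theory of $G$-covers \cite[\S 7--8]{Bushnell-Kutzko}, the choice of parabolic $P = MN$ gives rise to a canonical, injective, support-preserving algebra embedding
\[t_P \colon \mcH(M, \rho_M^d) \hookrightarrow \mcH(G, \rho_\mcD),\]
which sends a function supported on $K_M^d m K_M^d$ (for $m \in M$) to a function supported on $K_\mcD m K_\mcD$, up to a normalization constant depending on $P$.

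Next, I would analyze the support of an arbitrary $f \in \mcH(G, \rho_\mcD)$: it is contained in the set of $g \in G$ intertwining $\rho_\mcD$. By the intertwining description for types coming from a $G$-cover, this set is contained in $K_\mcD \cdot \Nor_G^\fs \cdot K_\mcD$ with $\Nor_G^\fs$ as in (\ref{eqn:Ns}), whose image in $\Nor_G(M)/M$ is by definition $W_G^\fs$. Under the assumption $W_G^\fs = \{1\}$, we therefore have $\Nor_G^\fs \subset M$, so the intertwining set lies inside $K_\mcD M K_\mcD$. Combined with the first step, this shows that $f$ is determined by its restriction to $M$ and belongs to the image of $t_P$; hence $t_P$ is surjective, hence an algebra isomorphism that preserves supports of functions.

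Finally, for the commutativity, I would apply Proposition~\ref{prop:comparisons}(1) with $G$ replaced by $M$ (so $G^0 = M$ and $\fs^0 = \fs_M$) to identify $\mcH(M, \rho_M^d) \simeq \mcH^{\fs_M}(M)$. Since $\sigma$ is supercuspidal, the Bernstein variety $\Irr^{\fs_M}(M)$ is the quotient torus $\fX_\nr(M)/\fX_\nr(M, \sigma)$ via (\ref{eqn:IrrsM}), and $\mcH^{\fs_M}(M)$ is the (commutative) group algebra of this abelian group, after using the multiplicity-free restriction of $\sigma$ to $M_1$ noted in Remark~\ref{rem:two_algebras} (which holds when $\sigma$ is generic, as is automatic for the Levis of $\rG_2(F)$ considered here). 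Commutativity of $\mcH(G, \rho_\mcD)$ then follows by transport along the isomorphism produced above.

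\textbf{Main obstacle.} The subtlest point is the support analysis in the middle step: verifying that the intertwining set of $\rho_\mcD$ in $G$, modulo $K_\mcD \cdot M \cdot K_\mcD$, is governed exactly by $W_G^\fs$ and not by a strictly larger subgroup. This requires tracing through \cite[Theorem~7.5]{Kim-Yu} together with the $G$-cover axioms of \cite[Definition~8.1]{Bushnell-Kutzko}, and it is where the specific structure of the Kim--Yu types enters in an essential way.
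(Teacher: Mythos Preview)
Your approach to the isomorphism is essentially the paper's: the paper writes ``Since $W_G^\fs=\{1\}$, we have $\Nor_G(\fs)\subset M$'' and then cites \cite[(12.1)]{Bushnell-Kutzko}, which is precisely the statement that under this support condition the embedding $t_P$ is an isomorphism. You have correctly unpacked this citation and identified the genuine content (the intertwining computation for Kim--Yu types) as the main obstacle.

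For commutativity, however, you take an unnecessary detour. The paper simply invokes \cite[(5.6)]{Bushnell-Kutzko}, which shows that for any supercuspidal type $(K_M^d,\rho_M^d)$ the algebra $\mcH(M,\rho_M^d)$ is commutative, with no multiplicity-free hypothesis. Your route through $\mcH^{\fs_M}(M)$ and Remark~\ref{rem:two_algebras} introduces the assumption that $\sigma|_{M_1}$ is multiplicity-free, which you then justify only for the Levis of $\rG_2(F)$ via genericity. But Proposition~\ref{prop:trivial case} is stated and used in the general Kim--Yu framework, not just for $\rG_2$, so this weakens the conclusion. The direct argument is cleaner: the intertwining set of $\rho_M^d$ in $M$ is $\widetilde{K}_M^d$, and $\widetilde{K}_M^d/K_M^d$ is a finitely generated free abelian group (a quotient of $M/M_1$), which forces the Hecke algebra to be commutative.
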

\begin{proof}
Since $W_G^\fs=\{1\}$, we have $\Nor_G(\fs)\subset M$. Then the first assertion follows from \cite[(12.1)]{Bushnell-Kutzko}. On the other hand, the algebra $\mcH(M,\rho_M^d)$ is commutative (see for instance \cite[(5.6)]{Bushnell-Kutzko}).
\end{proof}

\begin{remark} \label{rem: trivial commutative case}
Applying Proposition~\ref{prop:trivial case} to the group $G^0$, we see that if $W_{G^0}^{\fs^0}=\{1\}$, then there is
an algebra isomorphism 
\begin{equation}
\mcH(G^0,\rho_{\mcD^0})\simeq \mcH(M^0,\rho_{M^0}^0)
\end{equation}
that preserves support of functions;
thus the algebra $\mcH(G^0,\rho_{\mcD^0})$ is also commutative.
\end{remark}
\end{numberedparagraph}
\end{numberedparagraph}

\subsection{
Bernstein blocks} \label{subsec:ns}
We assume that $G$ splits over a tamely ramified extension, and that the residual characteristic $p$ of $F$ is odd, good for $G$ and does not divide the order of the fundamental groups of $\bG_\der$. Let $\bM$ be an $F$-rational Levi subgroup of an $F$-rational parabolic subgroup  of $\bG$. Then $p$ satisfies the same assumptions with respect to $\bM$, i.e. $p$ is good for $M$ and does not divide the order of the fundamental groups of $\bM_\der$.

Let $(\bS,\theta)$ be a pair consisting of a tamely ramified torus $\bS$ in $\bM$, and a character $\theta\colon S\to \C^\times$. For any positive real number $r$, consider
\begin{equation} \label{eqn:rootsG0}
\Sigma^{\bS,\theta}_r:=\left\{\gamma\in \Sigma(\bM,\bS)\,:\, (\theta\circ \Nor_{E/F})(\gamma^\vee(E_{r}^\times))=1\right\}.
\end{equation}
We have $\Sigma^{\bS,\theta}_s\subset \Sigma^{\bS,\theta}_r$ for $s<r$. Set $\Sigma^{\bS,\theta}_{r+}:=\bigcap_{s>r}\Sigma^{\bS,\theta}_s$. Then $r\mapsto \Sigma^{\bS,\theta}_r$ defines a $\Gal(F_{\sep}/F)$-invariant filtration. Let $r_{d-1}>r_{d-2}>\cdots>r_0>0$ denote the breaks of the filtration, i.e.~the $r$'s such that $\Sigma^{\bS,\theta}_{r}\ne\Sigma^{\bS,\theta}_{r+}$. We set $r_{-1}:=0$, and let $r_d$ be the depth of $\theta$. We have $r_d\ge r_{d-1}$. For each $i$ such that $0\le i \le d$, we denote by $\bM^i$ the connected reductive subgroup of $\bM$ with maximal torus $\bS$ and root system $\Sigma^{\bS,\theta}_{r_{i-1}+}$. By definition, the root system of $\bM^d$ is $\Sigma(\bM,\bS)$, and thus $\bM^d=\bM$.
The $\bM^i$'s are tame twisted Levi subgroups of $\bM$ by \cite[Lemma~3.6.1]{Kal-reg}. Moreover, the root system of $\bM^0$ is $\Sigma^{\bS,\theta}_{0+}$; if the latter is empty, we have $\bM^0=\bS$. 

Denote $M^i:=\bM^i(F)$. By \cite[Proposition~3.6.7]{Kal-reg}, the pair $(\bS,\theta)$ 
has a Howe factorization with respect to a sequence $(\phi_{-1},\phi_0,\ldots,\phi_d)$ of characters, where $\phi_{-1}\colon S\to\C^\times$ and $\phi_i\colon M^i\to\C^\times$ for $0\le i\le d$. More precisely, we have
\begin{equation}
    \theta=\prod_{i=-1}^d\phi_i,
\end{equation}
\begin{itemize}
    \item for any $i\in\{0,\ldots,d\}$, the character $\phi_i$ is trivial on $(\bM^i)_\scn$, has depth $r_i$ and is $\bM^{i+1}$-generic for any $i\ne d$;
    \item $\phi_d$ is trivial if $r_d=r_{d-1}$, and has depth $r_d$ otherwise.
\end{itemize}

\begin{numberedparagraph}
From now on, we make the following assumptions: $\bS$ is an elliptic maximal torus of $\bM$; the splitting extension of $\bS$ is tamely ramified; $\bS$ is maximally unramified inside $\bM^0$, i.e.~$S$ coincides with its maximal unramified subtorus as in \cite[Definition~3.4.2]{Kal-reg}; $\theta$ is \textit{$k_F$-regular} with respect to $M^0$, in the sense of \cite[Definition~3.1.1]{Kal-ns}.

For any point $x$ in the building of $M$, let $[x]$ be the projection of $x$ onto the reduced building $\mcB_{\red}(M)$. Let $M_x$ (resp.~$M_{[x]}$) be the subgroup of $M$ fixing $x$ (resp.~$[x]$). Recall that $M_{[x]}=\Nor_{M}(M_{x,0})$ by \cite[Lemma~3.3]{Yu}. 
As in \cite[Lemma~3.4.3]{Kal-reg}, we can then associate to $S$ a vertex $[y]$ of $\mcB_{\red}(M)$, which is the unique $\Gal(F^\nr/F)$-fixed point in the apartment $\mcA_{\red}(S,F^{\nr})$ of $\mcB_{\red}(M)$. 

Let $S_\bounded$ be the unique maximal bounded  subgroup of $S$ (which is also the unique maximal compact subgroup of $S$). Denoting by $\fS^\circ$ the connected N\'eron model of $\bS$,  we write  $S_0:=\fS^\circ(\fo_f)\subset S_\bounded$ (see \cite[\S3.1]{Kal-reg} for more details). 
Let
$\bbbM_{y,0}^0$ be the connected reductive $k_F$-group such that 
\begin{equation}
\bbM^0_{y,0}:=\bbbM_{y,0}^0(k_F)=M^0_{y,0}/M^0_{y,0+}.
\end{equation} 
There exists an elliptic maximal $k_F$-torus $\bbbS$ of $\bbbM_{y,0}^0$ such that for every unramified extension $F'$ of $F$, the image of $\bS(F')_0$ in $\bM(F')_{y,0}/\bM(F')_{y,0+}$ is equal to $\bbbS(k_{F'})$ (see \cite[Lemma~3.4.4]{Kal-reg}). 
By \cite[Lemma~3.4.14]{Kal-reg}, the character $\phi_{-1}|_{S_0}$ factors through a \textit{regular}  character $\overline{\phi_{-1}}$ of $\bbS:=\bbbS(k_F)$ as defined in \cite[Definition~3.4.16]{Kal-reg}. In particular, $\overline{\phi_{-1}}$ is in general position in the sense of \cite[Definition~5.15]{DL}. Then it follows from \cite[Proposition~7.4, Theorem~8.3]{DL} that the Deligne-Lusztig character 
\begin{equation} \label{eqn:DL}
    (-1)^{\rankkF(\bbbM_{y,0}^0)-\rankkF(\bbbS)}R_{\bbbS}^{\bbbM_{y,0}^0}(\overline{\phi_{-1}})
\end{equation}
    can be represented by a \textit{cuspidal}, i.e. which cannot be obtained from a
proper parabolic induction, 
$\bbM_{y,0}^0$-module $\bkappa_{S,\phi_{-1}}$, where $\rankkF(?)$ denotes the $k_F$-rank of $?$. Then $\bkappa_{S,\phi_{-1}}$ is  irreducible (see \cite[Definition~5.15]{DL}), and its pull-back to $M^0_{y,0}$ extends uniquely to a representation $\kappa_{S,\phi_{-1}}$ of $S M^0_{y,0}$. We define
\begin{equation} \label{eqn:rho}
\rho_{\bS,\theta}:=\Ind_{SM^0_{y,0}}^{M^0_{[y]}}\kappa_{S,\phi_{-1}}\quad\text{and}\quad
\sigma^0:=\cInd_{M^0_{[y]}}^{M^0}\rho_{\bS,\theta}.
\end{equation}
Then $\sigma^0$ is a depth-zero irreducible regular supercuspidal representation of $M^0$ (see \cite[Definition~3.4.19 \& Proposition~3.4.20]{Kal-reg}). 
Set $\fs_{M^0}:=[M^0,\sigma^0]_{M^0}$. 

More generally,  we define an irreducible supercuspidal representation $\sigma$
of $M$ by using the twisted Yu construction of \cite{Fintzen-Kaletha-Spice}. As observed in \cite[\S3.4]{Kal-ns}, it has the same effect as using the original Yu construction from \cite{Yu} applied to the character  $\theta\cdot\epsilon$, where $\epsilon\colon S\to\{\pm 1\}$ is the product of the characters $\epsilon_y^{M^i/M^{i-1}}$ of \cite[Theorem~3.4]{Fintzen-Kaletha-Spice}. The representation $\sigma$ is regular, i.e.~satisfies \cite[Definition~3.7.13]{Kal-reg}.  
Then $\chi\otimes\sigma$ is regular for any $\chi\in\fX_\nr(M)$, and we say that the inertial class $\fs=[M,\sigma]_G$ is regular.

\end{numberedparagraph}

\begin{numberedparagraph}

For $\fs_M=[M,\sigma]_M$ and $\fs_{M^0}=[M^0,\sigma^0]_{M^0}$, the map
\begin{equation} \label{eqn:map_f}
\begin{matrix}\fff\colon
&\Irr^{\fs_M}(M)&\longrightarrow &\Irr^{{\fs_{M^0}}}(M^0)\cr
&\sigma\otimes\chi&\mapsto&\sigma^0\otimes\chi|_{M^0}
\end{matrix},\quad\chi\in\fX_\nr(M),
\end{equation}
is an isomorphism of varieties by \cite[Theorem~6.1]{Mishra}. Let $\Oo^0$ be the orbit of $\sigma^0$ under the action of $\fX_\nr(M^0)$. Let $W_{G^0}^{\fs^0}=W_{\Oo^0} \rtimes R(\Oo^0)$ be the decomposition analogous to (\ref{eqn:WMO}). Then (\ref{eqn:map_f}) and (\ref{eqn:IrrsM}), applied to both $\fs_M$ and $\fs_{M_0}$, show that the orbits
$\Oo$ and $\Oo^0$ are isomorphic. 
The following is a consequence of \cite[7.3, 9.3]{Ad-Mi}. 
\begin{lem}[Adler-Mishra]\label{lem:GG0}
Suppose that $p$ is good for $G$ and does not divide the order of the fundamental group of $\bG_\der$. Let $\fs=[M,\sigma]_G\in\fB(G)$ be a regular inertial class. Then (1) there is a group isomorphism
\begin{equation}
\fw_\sigma\colon W_G^\fs\longrightarrow W_{G^0}^{\fs^0},
\end{equation}
where $\fs^0=[M^0,\sigma^0]_{G^0}$, 
and $\fff$ is equivariant with respect to $\fw_\sigma$.\\
(2) there is an isomorphism 
\begin{equation}
\fl_\sigma\colon(\Irr^{\fs_M}(M)/\!/W_G^\fs)_\natural\longrightarrow (\Irr^{{\fs_{M^0}}}(M^0)/\!/W_{G^0}^{\fs^0})_{\natural^0}.
\end{equation}
\end{lem}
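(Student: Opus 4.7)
The plan is to derive the lemma from the main comparison results of Adler--Mishra \cite{Ad-Mi}, which tightly link the Bernstein blocks for $\fs$ in $G$ with those for $\fs^0$ in $G^0$ when $\sigma$ is regular. The geometric input is the Howe factorization $\theta = \prod_i \phi_i$: it realizes $\sigma$ as obtained from $\sigma^0$ by twisting with the generic characters $\phi_0, \ldots, \phi_d$, so one expects all Bernstein-theoretic data to transport correctly under this twisting.

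For part (1), I would first match the root systems $\Sigma_{\Oo,\mu}$ and $\Sigma_{\Oo^0,\mu}$. The split center of $\bM^0$ agrees with that of $\bM$ (since $\bM^0$ is a twisted Levi of $\bM$, obtained by killing certain characters on the root spaces), so $A_{M^0} = A_M$ and both root systems live inside $X^*(A_M)$. By Heiermann's description \eqref{eqn:Root System}, an element $\gamma$ lies in $\Sigma_{\Oo,\mu}$ if and only if $\mu^{M_\gamma}$ has a zero on $\Oo$, and a direct comparison of Harish-Chandra $\mu$-functions for $\sigma$ and $\sigma^0$ (as in \cite[\S 7]{Ad-Mi}) shows this is equivalent to $\mu^{M_\gamma^0}$ vanishing on $\Oo^0$. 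Combined with the decomposition \eqref{eqn:WMO}, this identifies both the Weyl group factor $W_\Oo \cong W_{\Oo^0}$ and the $R$-group factor $R(\Oo) \cong R(\Oo^0)$, yielding $\fw_\sigma$. Equivariance of $\fff$ under $\fw_\sigma$ is then essentially tautological: a representative $n \in \Nor_\bG(\bM)(F)$ of an element of $W_G^\fs$ permutes $\fX_\nr(M)/\fX_\nr(M,\sigma)$, and restriction along $M^0 \hookrightarrow M$ carries this permutation to the corresponding one on $\fX_\nr(M^0)/\fX_\nr(M^0,\sigma^0)$, matching the action of $\fw_\sigma(n)$.

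For part (2), the equivariant isomorphism $\fff$ from \eqref{eqn:map_f} together with $\fw_\sigma$ immediately produces a bijection at the level of underlying spaces, and hence of untwisted extended quotients; under this identification the stabilizer $W_G^{\fs,\chi\otimes\sigma}$ maps onto $W_{G^0}^{\fs^0,\sigma^0\otimes\chi|_{M^0}}$. The genuine obstacle, and the substantive content of \cite[\S 9.3]{Ad-Mi}, is to show that the cocycles $\natural_\chi$ and $\natural^0_{\chi|_{M^0}}$ define the same class in $H^2(W_G^{\fs,\chi\otimes\sigma}, \C^\times)$ under this identification. These cocycles arise from the failure of normalized intertwining operators on parabolically induced representations to compose strictly; to match them one compares the intertwining operators for $i_P^G(\chi \otimes \sigma)$ with those for $i_{P^0}^{G^0}(\chi|_{M^0}\otimes\sigma^0)$. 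The twisted Yu construction, together with the Hecke-algebra-level isomorphism of \cite[\S 9.3]{Ad-Mi}, identifies these operators up to scalars that furnish the relevant coboundary. Once the cocycles are matched, one obtains an identification $\Irr(\C[W_G^{\fs,\chi\otimes\sigma}, \natural_\chi]) \cong \Irr(\C[W_{G^0}^{\fs^0, \sigma^0\otimes\chi|_{M^0}}, \natural^0_{\chi|_{M^0}}])$ for every $\chi$, and by definition of the spectral twisted extended quotient \eqref{eqn:teq} these assemble into the bijection $\fl_\sigma$ of (2), completing the proof.
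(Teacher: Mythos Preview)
The paper does not supply its own proof of this lemma: it is stated as ``a consequence of \cite[7.3, 9.3]{Ad-Mi}'' and no further argument is given. Your sketch of part (1) is a reasonable outline of what the Adler--Mishra comparison does, though your justification of $A_{M^0}=A_M$ (``since $\bM^0$ is a twisted Levi of $\bM$'') is incomplete as stated; twisted Levis generally have \emph{larger} centers, and one needs the ellipticity of $\bS$ in $\bM$ to force the split parts to agree.

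There is, however, a genuine misreading in your treatment of part (2). You identify the ``substantive obstacle'' as showing that the cocycles $\natural_\chi$ and $\natural^0_{\chi|_{M^0}}$ define the same cohomology class, and you propose to attack this by comparing normalized intertwining operators for $G$ and $G^0$. But look at how $\natural^0$ is introduced in the paper: immediately after the lemma, the cocycle $\natural^0_{\fff(x)}$ is \emph{defined} (equation \eqref{eqn:cocycleG0}) as the transport of $\natural_x$ along the isomorphism $\fw_\sigma|_{W_G^{\fs,x}}$. It is not an independently constructed family arising from Solleveld's theory applied to $G^0$. With this definition, part (2) is a formal consequence of part (1) and the $\fw_\sigma$-equivariance of $\fff$: both twisted extended quotients are assembled from literally the same data, related by transport of structure. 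No intertwining-operator comparison is needed for the statement as written.

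The comparison you have in mind --- matching $\natural^0$ against the cocycles that Solleveld's construction \emph{intrinsically} attaches to $\fs^0$ in $G^0$ --- is a separate and deeper question, but it is not what this lemma asserts.
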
 

The collection $\natural^0$ of $2$-cocycles is defined as follows. For $x\in \Irr^{\fs_M}(M)$, let $W_G^{\fs,x}$ denote the stabilizer of $x$ in $W^\fs$. Since $\fff$ is equivariant with respect to $\fw_\sigma$, the latter restricts to an isomorphism
\begin{equation}
\fw_\sigma|_{W_G^{\fs,x}}\colon W_G^{\fs,x}\longrightarrow W_{G^0}^{\fs^0,\fff(x)},
\end{equation}
and every $2$-cocycle 
$\natural_x\colon W_G^{\fs,x}\times W_G^{\fs,x}\longrightarrow\C^\times$ defines a $2$-cocycle
\begin{equation} \label{eqn:cocycleG0}
\natural_{\fff(x)}^0\colon W_{G^0}^{\fs^0,\fff(x)}\times W_{G^0}^{\fs^0,\fff(x)}\longrightarrow\C^\times.
\end{equation} 
Consequentially, we obtain in Theorem \ref{thm:xiGG0}(2) new cases of \cite[Conjecture~1.1]{Ad-Mi}. 
\begin{theorem} \label{thm:xiGG0}
Suppose  that $p$ is good for $G$ and does not divide the order of the fundamental group of $\bG_\der$. Let $\fs=[M,\sigma]_G\in\fB(G)$ be a regular inertial class. 
\begin{enumerate}
\item[(1)] Then
\begin{equation}
(\xi_{G^0}^{\fs^0})^{-1}\circ\fl_\sigma\circ\xi_G^\fs
\colon\Irr^\fs(G) \longrightarrow \Irr(G^0)^{\fs^0}
\end{equation}
is a bijection.
\item[(2)]
We have a bijection
\begin{equation}
\Irr(\mcH^\fs(G)) \longrightarrow\Irr(\mcH^{\fs^0}(G^0)).
\end{equation}
\end{enumerate}
\end{theorem}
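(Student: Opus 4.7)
The plan is to prove part (1) purely as a formal composition of three already-established bijections, and then derive part (2) from the description of $\Irr(\mcH^?(?))$ supplied by Corollary~\ref{cor:two_algebras}.

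First, I would observe that $\xi_G^\fs$ is a bijection by Proposition~\ref{prop:bijectionsS}. Since $\bG^0$ is a tame twisted Levi of $\bG$ (constructed in the setup of $\mathsection\ref{subsec:ns}$), it inherits the tame-splitting hypothesis on $\bG$, and the residual characteristic $p$ remains good for $G^0$ and coprime to the order of its Weyl group; hence Proposition~\ref{prop:bijectionsS} applies equally well to $(G^0,\fs^0)$, yielding the bijection $\xi_{G^0}^{\fs^0}$. Next, Lemma~\ref{lem:GG0}(2)---the content imported from \cite{Ad-Mi}---produces the isomorphism
\[\fl_\sigma\colon (\Irr^{\fs_M}(M)/\!/W_G^\fs)_\natural \longrightarrow (\Irr^{\fs_{M^0}}(M^0)/\!/W_{G^0}^{\fs^0})_{\natural^0}\]
precisely under the hypotheses that $p$ is good for $\bG$, coprime to the order of the fundamental group of $\bG_\der$, and that the inertial class $\fs$ is regular. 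Composing, the map $(\xi_{G^0}^{\fs^0})^{-1}\circ\fl_\sigma\circ\xi_G^\fs$ is a bijection $\Irr^\fs(G)\to\Irr^{\fs^0}(G^0)$, establishing part (1).

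For part (2), I would invoke Corollary~\ref{cor:two_algebras} twice, for $G$ and for $G^0$, yielding bijections between $\Irr(\mcH^\fs(G))$ and $(\Irr^{\fs_M}(M)/\!/W_G^\fs)_\natural$ on the one hand, and between $\Irr(\mcH^{\fs^0}(G^0))$ and $(\Irr^{\fs_{M^0}}(M^0)/\!/W_{G^0}^{\fs^0})_{\natural^0}$ on the other. Sandwiching $\fl_\sigma$ between these two bijections produces the desired bijection $\Irr(\mcH^\fs(G))\to\Irr(\mcH^{\fs^0}(G^0))$.

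The main obstacle, already handled in \cite{Ad-Mi} and packaged here as Lemma~\ref{lem:GG0}, is the verification that the Mishra isomorphism of varieties $\fff\colon\Irr^{\fs_M}(M)\to\Irr^{\fs_{M^0}}(M^0)$ is equivariant with respect to a group isomorphism $\fw_\sigma\colon W_G^\fs\to W_{G^0}^{\fs^0}$ and that the $2$-cocycle collections $\natural$ and $\natural^0$ correspond under the resulting identification of stabilizers---the latter being essentially built into the definition~\eqref{eqn:cocycleG0}. Once this equivariance and cocycle matching is granted, both bijections are manifestly compositions of bijections, and no further argument is required.
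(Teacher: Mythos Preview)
Your proposal is correct and follows essentially the same approach as the paper: part (1) is the composition of the three bijections $\xi_G^\fs$, $\fl_\sigma$, and $(\xi_{G^0}^{\fs^0})^{-1}$, and part (2) is obtained by linking the two twisted extended quotients via $\fl_\sigma$ and invoking Corollary~\ref{cor:two_algebras} on each side. The only cosmetic difference is that the paper routes part (2) through $\Irr(\End_G(i_P^G(V_B)))$ and part (1) before appealing to Corollary~\ref{cor:two_algebras}, whereas you go directly from Corollary~\ref{cor:two_algebras} on both sides to the extended quotients; your version is slightly more streamlined but logically equivalent.
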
 
\begin{proof}
(1) This follows from the fact that the map $\xi_G$ defined in (\ref{eqn:xiG}) and the analogous map 
\begin{equation}
\xi_{G^0}^{\fs^0}\colon   \Irr(G)^{\fs^0} \longrightarrow (\Irr^{\fs_{M^0}}(M^0)/\!/W_{G^0}^{\fs^0})_\natural.
\end{equation}
are isomorphisms.

(2) By \cite[Theorem~9.7]{Solleveld-endomorphism-algebra} applied to both $G$ and $G^0$, we have 
\[\Irr^{\mathfrak{s}}(G)\cong \Irr(\End(i_P^G(V_B))\quad\text{and}\quad 
\Irr^{\mathfrak{s}^0}(G^0)\cong \Irr(\End(i_{P_0}^{G^0}(V_{B^0})).\]
Thus by (1), we have 
$\Irr(\End(i_P^G(V_B))\cong \Irr^{\mathfrak{s}}(G)\cong \Irr^{\mathfrak{s}^0}(G^0)\cong \Irr(\End(i_{P^0}^{G^0}(V_{B^0}))$. 
Then the result follows by applying Corollary~\ref{cor:two_algebras} to both $\fs$ and $\fs^0$.
\end{proof}
\begin{remark} 
The algebras $\mcH^\fs(G)$ and $\mcH^{\fs^0}(G^0)$ are not always isomorphic, as shown in \cite[Example~11.8]{Goldberg-Roche-Hecke-algebras} for $G=\SL_n(F)$. 
However, we show in Theorem~\ref{Weyl_iso} that they are isomorphic when $G=\rG_2$ and $M$ is a maximal Levi subgroup.
\end{remark}
\end{numberedparagraph}

\begin{numberedparagraph}\label{nonsing-gp-side-notations}
We end this section with brief recollections on \textit{non-singular} supercuspidals in the sense of \cite{Kal-ns}, as we will consider \textit{non-singular Bernstein blocks} in \S\ref{non-sing-Bernstein-blocks-property1}. 

Let $\theta\colon \bS(k_F)\to\overline{\Q}_{\ell}$ be a non-singular character. Let $\Nor_\bG(\bS)(k_F)_{\theta}$ denote the stabilizer of the pair $(\bS,\theta)$. By \cite[Proposition~2.3.3]{Kal-ns}, the character $\theta$ extends to the group $\Nor_\bG(\bS)(k_F)_{\theta}$. Let $\bbbU\subset\bG$ be the unipotent radical of a $k_F$-rational Borel subgroup of $\bG$,  containing $\bS$, and let $Y_\bbbU$ be the corresponding Deligne-Lusztig variety. 
 Let $\kappa_{(\bS,\theta)}$ be the isomorphism class of the representation $H_c^{d_\bbbU}(Y_\bbbU,\overline{\Q}_{\ell})_{\theta}$. For simplicity of expositions, we only describe the depth-zero situation. The representation $\pi_{(\bS,\theta)}:=\cInd_{G_x}^{G}\inf^{G_x}_{\bbG_x}
 \kappa_{(S,\theta)}$ is supercuspidal but not necessarily irreducible. When it is indeed irreducible, we return to the case where $\theta$ is regular as in \cite{Kal-reg}. When $\pi_{(\bS,\theta)}$ is reducible (e.g.~when $\theta$ is non-singular non-regular), it decomposes as \cite[3.3.3]{Kal-ns}
\begin{equation}
    \pi_{(\bS,\theta)}=\sum\limits_{\varrho\in \mathrm{Irr}(\Nor_\bG(\bS)(k_F)_{\theta},\theta)}\dim(\varrho)\pi_{(\bS,\theta,\varrho)}^{\epsilon},
\end{equation}
where the constituents $\pi_{(\bS,\theta,\rho)}^{\epsilon}:=\cInd_{G_x}^{G}\inf^{G_x}_{\bbG_x}\kappa_{(\bS,\theta,\varrho)}^{\epsilon}$, constructed from $\kappa_{(\bS,\theta,\varrho)}^{\epsilon}$ as in \cite[Definition 2.7.6]{Kal-ns}, are irreducible \textit{non-singular} supercuspidal representations. Here $\Irr(\Nor_\bG(\bS)(k_F)_{\theta},\theta)$ denotes the set of irreducible representations of $\Nor_\bG(\bS)(k_F)_{\theta}$ whose restriction to $\bS(k_F)$ is $\theta$-isotypic $\epsilon$ is a fixed coherent splitting of the family of $2$-cocycles $\{\eta_{\Psi,\bbbU}\}$ as in \cite[\S 2.4]{Kal-ns}. 
The positive-depth supercuspidals can be described similarly, by applying Yu's construction \cite{Yu} to the representation $\pi_{(G^0,\bS,\phi_{-1})}$ of $G^0$ associated to the pair $(\bS,\phi_{-1})$ \cite[(3.2)]{Kal-ns}.
\end{numberedparagraph}

\section{Local Langlands correspondence for Bernstein blocks} \label{sec:LLC}

\subsection{Axiomatic construction of the correspondence} \label{subsec:Galois} Let $G^\vee$ denote the Langlands dual group of $G$, i.e.~the complex Lie group  with root datum dual to that of $G$.
Let $\rZ_{G^\vee}$ be the center of $G^\vee$ and $G^\vee_{\ad}$ the quotient $G^\vee/\rZ_{G^\vee}$. The $L$-group of $G$ is defined to be ${}^LG:=G^\vee\rtimes W_F$. Similarly, $M^\vee$ denotes the Langlands dual group of $M$. Let $\rZ_{M^\vee\rtimes I_F}$ be the center of $M^\vee\rtimes I_F$, and define
\begin{equation} \label{eqn:Xnrdual}
\fX_\nr({}^LM):=(\rZ_{M^\vee\rtimes I_F})_{W_F}^\circ.
\end{equation}
The group $\fX_\nr({}^LM)$ is naturally isomorphic to the group $\fX_\nr(M)$. We denote the isomorphism $\fX_\nr(M)\xrightarrow{\sim} \fX_\nr({}^LM)$ by $\chi\mapsto\chi^\vee$.

\begin{numberedparagraph}
An $L$-parameter is a continuous morphism
$\varphi\colon W_F'\to {}^LG$ such that 
\begin{itemize}
    \item $\varphi(w)$ is semisimple for each $w\in W_F$;
    \item the restriction $\varphi|_{\SL_2(\C)}$ is a morphism of complex algebraic groups.
\end{itemize}
An $L$-parameter $\varphi$ is said to be \textit{discrete} if $\varphi(W_F')$ is not contained in any proper Levi subgroup of $G^\vee$.
The group $G^\vee$ acts on the set of $L$-parameters. We denote by $\Phi(G)$ the set of $G^\vee$-classes of $G$-relevant $L$-parameters. Attached to each $L$-parameter $\varphi$ for $G$, we define several (possibly disconnected) complex reductive groups as follows. 

Set $\rZ_{G^\vee}(\varphi):=\rZ_{G^\vee}(\varphi(W'_F))$. Let   $\rZ_{G^\vee_\sconn}^1(\phi)$ be the inverse image of $\rZ_{G^\vee}(\phi)/\rZ_{G^\vee}(\phi)\cap \rZ_{G^\vee}$ (viewed as a subgroup of $G^\vee_\ad$) under the quotient map $G^\vee_\sconn\to G^\vee_{\ad}$. Then we set
\begin{equation} \label{eqn:Gphi}
 \mcG_\varphi:=\rZ_{G^\vee}^1(\varphi|_{W_F}).
\end{equation} 
We also define the following component group
\begin{equation} \label{eqn:A-phi}
\mcS_\varphi:=\rZ_{G^\vee}^1(\varphi)/\rZ_{G^\vee}^1(\varphi)^\circ.
\end{equation}
An \textit{enhancement} of $\varphi$ is an irreducible representation $\varrho$ of $S_\varphi$. Pairs $(\varphi,\varrho)$ are called \textit{enhanced $L$-parameters} (for $G$ and its inner forms). 
\end{numberedparagraph}
\begin{numberedparagraph}
Let $G^\vee$ act on the set of enhanced $L$-parameters  via
\begin{equation} \label{eqn:Gvee-action}
g\cdot (\varphi,\varrho) = (g \varphi g^{-1},g \cdot \varrho).
\end{equation}
We denote by $\Phi_\enh(G)$ the set $G^\vee$-conjugacy classes of enhanced $L$-parameters.
We define an action of $\fX_\nr({}^LM)$ on $\Phi_\enh(M)$ as follows. 
Given $(\varphi,\varrho)\in\Phi_\enh(M)$ and $\xi\in \fX_\nr({}^LM)$, we define $(\xi\varphi,\varrho)\in\Phi_\enh(M)$ by 
$\xi\varphi:=\varphi$ on $I_F\times\SL_2(\C)$ and  $(\xi\varphi)(\Frob_F):=\tilde \xi \varphi(\Frob_F)$. Here $\tilde \xi\in \rZ_{M^\vee\rtimes I_F}^\circ$ represents $z$.

For an $L$-parameter $\varphi$ of $G$, we denote by $u_\varphi$ the image of 
$\left(1,\left(\begin{smallmatrix} 1&1\cr 0&1\end{smallmatrix}\right)\right)$ under $\varphi$. By \cite[(92)]{AMS1}, we have $u_\varphi\in \mcG_\varphi^\circ$ and
\begin{equation}\mcS_\varphi\simeq\rZ_{\mcG_\varphi}(u_\varphi)/\rZ_{\mcG_\varphi}(u_\varphi)^\circ:=A_{\mcG^\circ_\varphi}(u_\varphi).
\end{equation} 
Let $\varrho$ be an irreducible representation of $A_{\mcG^\circ_\varphi}(u_\varphi)$. The pair $(u_\varphi,\varrho)$ is said to be   \textit{cuspidal} if there is an $\mcG^\circ$-equivariant \textit{cuspidal} (in the sense of Lusztig \cite{Lusztig-IC}) local system on the $\mcG^\circ_\varphi$-conjugacy class of $u_\varphi$. 
\begin{Defn} \label{def:cuspidal}
An enhanced $L$-parameter $(\varphi,\varrho)$ is said to be \textit{cuspidal} if $\varphi$ is discrete and $(u_\varphi,\varrho)$ is a cuspidal pair in $\mcG_\phi$.
\end{Defn}
\end{numberedparagraph}

\begin{numberedparagraph}
From now on, we use the subscript $\cus$ to denote ``cuspidal''. Let $(\varphi_\cus,\varrho_\cus)$ be a cuspidal enhanced $L$-parameter for $M$, we denote by
\begin{equation} \label{eqn:svee}
\fs^\vee:=[{}^LM,\varphi_\cus,\varrho_\cus]_{G^\vee}   
\end{equation}
the $G^\vee$-conjugacy class of $({}^LM,\Oo^\vee)$, where $\Oo^\vee$ is  the orbit of $(\varphi_\cus,\varrho_\cus)$ under the action of $\fX_\nr({}^LM)$. Let $\fB^\vee(G)$ be the set of such $\fs^\vee$. Set  $\fs^\vee_{M}:=[{}^LM,\varphi_\cus,\varrho_\cus]_{M^\vee}$. Let 
\begin{equation}
\Nor^{\fs^\vee}_{G^\vee}:=\left\{n\in \Nor_{G^\vee}(M^\vee)\,:\, {}^n(\varphi_\cus,\varrho_\cus)\simeq (\varphi_\cus,\varrho_\cus)\otimes\chi^\vee\;\text{ for some $\chi^\vee\in\fX_\nr({}^LM)$}\right\}.
\end{equation}
Denote 
\begin{equation} \label{eqn:Wsdual}
    W^{\fs^\vee}_{G^\vee}:=W(M^\vee,\Oo^\vee):=\Nor^{\fs^\vee}_{G^\vee}/M^\vee.
\end{equation}
The group $W_{G^\vee}^{\fs^\vee}$ is a finite extended Weyl group, i.e.~it decomposes as
\begin{equation} \label{eqn:decWdual}
W_{G^\vee}^{\fs^\vee}= W_{\Oo^\vee} \rtimes R(\Oo^\vee),
\end{equation}
where $W_{\Oo^\vee}$ is a finite Weyl group, and $R(\Oo^\vee)$ is a finite abelian group (see \eqref{defining-ROcheck}).
\end{numberedparagraph}
\begin{numberedparagraph}
Let $\Phi_\enh^\cusp(M)$ denote the set of $M^\vee$-conjugacy classes of cuspidal enhanced $L$-parameters for $M$.
By \cite[(115)]{AMS1}, there is a decomposition of $\Phi_\enh(G)$ into series of enhanced $L$-parameters indexed by the set $\fB^\vee(G)$: 
\begin{equation} \label{eqn:partition}
\Phi_\enh(G) =\bigsqcup_{\fs^\vee\in \fB^\vee(G)}\Phi_\enh^{\fs^\vee}(G), 
\end{equation}
where $\Phi_\enh^{\fs^\vee}(G)$ consists of enhanced $L$-parameters whose \textit{cuspidal support} lies in $\fs^{\vee}$. Moreover, for any $(\varphi_\cus,\varrho_\cus)\in\Phi_\enh^\cusp(M)$, we have
\begin{equation} \label{eqn:cusp}
\Phi_\enh^{\fs^\vee_M}(M)=\fX_\nr({}^LM)\cdot(\varphi_\cus,\varrho_\cus).
\end{equation}
For $\fs^\vee=[{}^LM,\varphi_\cus,\varrho_\cus]_{G^\vee}\in \fB^\vee(G)$, there exists a bijection \cite[Theorem~9.3]{AMS1}
\begin{equation} \label{eqn:xiGv}
  \xi_{G^\vee}^{\fs^\vee}\colon\Phi_\enh^{\fs^\vee}(G)\longrightarrow (\Phi_\enh^{\fs_M^\vee}(M)/\!/W^{\fs^\vee}_{G^\vee})_{{}^L\natural},
\end{equation}
where ${}^L\natural=({}^L\natural_z)_{z\in \fX_\nr({}^LM)/\fX_\nr({}^LM,(\varphi_\cusp,\rho_\cus))}$ is a collection of $2$-cocycles, and 
\begin{equation} \label{eqn:dual stab}
\fX_\nr({}^LM,(\varphi_\sigma,\varrho_\sigma)):=\left\{z\in \fX_\nr({}^LM)\,:\,z\cdot(\varphi_\sigma,\varrho_\sigma))=(\varphi_\sigma,\varrho_\sigma)\right\}.
\end{equation}
\end{numberedparagraph}
\begin{numberedparagraph}
Let $\eta\colon M\to \widetilde M$ be an $F$-morphism of connected reductive $F$-groups with abelian kernel and cokernel. Then by \cite[\S1.4]{Bor}, $\eta$ induces a map from the root datum of $M^\vee$ to that of $M^\vee$. We denote by $\eta^\vee\colon \widetilde M^\vee\to M^\vee$ the associated morphism of algebraic groups as in \cite[\S2.1]{Bor}. 
\begin{align} \label{eqn:Leta}
{}^L\eta\colon& {}^L\widetilde M\to{}^LM\cr
&(\widetilde m,w)\mapsto (\eta^\vee(\widetilde m),w)\;\;\text{for $\widetilde m\in M^\vee$.}
\end{align}
We recall \cite[Desideratum 10.3(5)]{Bor}: 
Let $\widetilde\varphi\colon W_F\times W_F\to \widetilde M^\vee$ be an $L$-parameter  for $\widetilde M$ and set $\varphi:={}^L\eta\circ \widetilde \varphi$. Then for any $\widetilde\pi\in\Pi_{\widetilde\varphi}$, the representation $\widetilde\pi \circ \eta$ is the direct sum of finitely many irreducible representations belonging to $\Pi_\varphi$.
\end{numberedparagraph}

\begin{numberedparagraph}
Let $\Irr^\scusp(M)\subset \Irr(M)$ denote the set of equivalence classes of irreducible supercuspidal representations of $M$. Let $g\in\Nor_\bG(\bM)(F)$. For any $\sigma\in\Irr^\scusp(M)$, we have ${}^g\sigma\in\Irr^\scusp(M)$. We denote by $c_g$ the isomorphism 
\begin{equation} \label{eqn:c_g}
c_g\colon (M,\sigma)\overset{\sim}{\to} (M,{}^g\sigma).
\end{equation}
Let ${}^Lc_g\colon {}^LM\to {}^LM$ be the morphism defined by \eqref{eqn:Leta}.
Let $w\mapsto w^\vee$ be the canonical isomorphism from $W(M):=\Nor_\bG(\bM)(F)/M$ to $W(M^\vee):=\Nor_{G^\vee}(M^\vee)/M^\vee$ defined in \cite[Proposition~3.1]{ABPS-CM}. Let $n_w$ (resp.~$n_{w^\vee}$) be a representative of $w$ (resp.~$w^\vee$) in $\Nor_\bG(\bM)(F)$ (resp.~$\Nor_{G^\vee}(M^\vee)$). 
With these notations, we have  $c_{n_w}^\vee(m^\vee)={}^{n_{w^\vee}}m^\vee$.  

For $\sigma\in \Irr(M)$, since the equivalence class of ${}^{n_w}\sigma$ does not depend on the choice of the representative $n_w$, we will simply denote it by ${}^w \sigma$. Similarly, we use the notation ${}{w^\vee}$, instead of ${}{n_{w^\vee}}$, to denote the action of $\Nor_{G^\vee}(M^\vee)$ on $\Phi_\enh(M)$.
\begin{property} \label{property}
Let $M$ be a Levi subgroup of $G$. Let  $\fs_M:=[M,\sigma]_M\in\fB(M)$. There exists a map
\begin{equation}
    \begin{matrix}\fL^{\fs_M}\colon&\Irr^{\fs_M}(M)&\xlongrightarrow&\Phi_\enh^\cusp(M)\cr
    &\sigma&\mapsto &(\varphi_\sigma,\varrho_\sigma)
    \end{matrix}
\end{equation}
such that the following properties are satisfied for any $\sigma\in\Irr^{\fs_M}(M)$:
\begin{enumerate}
    \item[(1)]\label{property-1} For any $\chi\in\fX_\nr(M)$, we have
    \begin{equation}    (\varphi_{\chi\otimes\sigma},\varrho_{\chi\otimes\sigma})=\chi^\vee\cdot(\varphi_\sigma,\varrho_\sigma),
    \end{equation}
    where $\chi\mapsto\chi^\vee$ is the canonical isomorphism $\fX_\nr(M)\overset{\sim}{\to}\fX_\nr({}^LM)$.
    \item[(2)]\label{property-2}
    For any $w\in W(M)$, we have
 \begin{equation} {}^{w^\vee}(\varphi_\sigma,\varrho_\sigma)\simeq (\varphi_{{}^w\sigma},\varrho_{{}^w\sigma}),
 \end{equation}
 where $w\mapsto w^\vee$ is the canonical isomorphism  $W(M)\overset{\sim}{\to}W(M^\vee)$.
    \end{enumerate}
\end{property}
\end{numberedparagraph}

\begin{remark} \label{rem:Desiderata}
\begin{enumerate}
\item 
Property~\ref{property}(1) is closely related to  \cite[Desideratum~10.3.(2)]{Bor}. 
\item
Property~\ref{property}(2) is a stronger version of \cite[Desideratum~10.3.(5)]{Bor} for $\eta=c_g$, and can be viewed as an analogue of \cite[Conjecture~5.2.4]{Haines} for enhanced $L$-parameters for supercuspidal representations. In the special case where the $L$-packet of $\sigma$ is a singleton, Property~\ref{property}(2) is in fact equivalent to \cite[Desideratum~10.3.(5)]{Bor} for $\eta=c_g$.
\end{enumerate}
\end{remark}

\begin{lem} \label{lem:duals}  Let $\fs=[M,\sigma]_M\in \fB(G)$ satisfy {\rm Property}~\ref{property}. 
Then there is a group isomorphism
\begin{equation}\label{defining-frak-r}
\fr\colon W_G^{\fs}\overset{\sim}{\rightarrow}  W_{G^{\vee}}^{\fs^{\vee}},\quad\text{where $\fs^\vee:=[{}^LM,\fL^{\fs_M}(\sigma)]_{G^\vee}$}.
\end{equation}
Moreover, $\fL^{\fs_M}$ is equivariant with respect to $\fr$.
\end{lem}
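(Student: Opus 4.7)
The plan is to define $\fr$ as the restriction of the canonical isomorphism $W(M)\xrightarrow{\sim} W(M^\vee)$, $w\mapsto w^\vee$, and show that Property~\ref{property} forces this restriction to land in $W_{G^\vee}^{\fs^\vee}$ and to be surjective onto it. The equivariance of $\fL^{\fs_M}$ then falls out immediately from Property~\ref{property}(2).

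First, for well-definedness, I would take $w\in W_G^\fs$, so that by definition ${}^w\sigma\simeq \chi\otimes\sigma$ for some $\chi\in\fX_\nr(M)$. Applying $\fL^{\fs_M}$ to both sides of this isomorphism and combining Property~\ref{property}(1) and (2), I get
\[
{}^{w^\vee}(\varphi_\sigma,\varrho_\sigma)\;\simeq\;(\varphi_{{}^w\sigma},\varrho_{{}^w\sigma})\;=\;(\varphi_{\chi\otimes\sigma},\varrho_{\chi\otimes\sigma})\;=\;\chi^\vee\cdot(\varphi_\sigma,\varrho_\sigma),
\]
which is exactly the condition for $w^\vee\in W_{G^\vee}^{\fs^\vee}$ (compare \eqref{eqn:Wsdual}). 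Group-homomorphism properties of $\fr$ are inherited from $w\mapsto w^\vee$, and injectivity is free since $w\mapsto w^\vee$ is already a bijection on $W(M)$.

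Surjectivity is the delicate step. Starting from $w^\vee\in W_{G^\vee}^{\fs^\vee}$, I have ${}^{w^\vee}(\varphi_\sigma,\varrho_\sigma)\simeq\chi^\vee\cdot(\varphi_\sigma,\varrho_\sigma)$ for some $\chi^\vee\in\fX_\nr({}^LM)$. Writing $\chi^\vee$ as the image of some $\chi\in\fX_\nr(M)$ under the canonical isomorphism, Properties (1) and (2) rearrange the above to
\[
\fL^{\fs_M}({}^w\sigma)\;\simeq\;\fL^{\fs_M}(\chi\otimes\sigma).
\]
Both $\chi\otimes\sigma$ and ${}^w\sigma$ lie in $\Irr^{\fs_M}(M)$, and Property~\ref{property}(1) exhibits $\fL^{\fs_M}$ restricted to the $\fX_\nr(M)$-orbit $\Oo$ of $\sigma$ as an $\fX_\nr(M)$-equivariant map onto the $\fX_\nr({}^LM)$-orbit of $(\varphi_\sigma,\varrho_\sigma)$, inducing a bijection of orbits $\Oo\simeq\Oo^\vee$ with matching stabilizers $\fX_\nr(M,\sigma)\simeq\fX_\nr({}^LM,(\varphi_\sigma,\varrho_\sigma))$. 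Hence ${}^w\sigma$ and $\chi\otimes\sigma$ must already be equivalent as representations, so ${}^w\sigma\in\Oo$ and therefore $w\in W_G^\fs$.

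For the equivariance of $\fL^{\fs_M}$ with respect to $\fr$, Property~\ref{property}(2) says directly that for each $w\in W_G^\fs$,
\[
\fL^{\fs_M}({}^w\sigma)\;=\;(\varphi_{{}^w\sigma},\varrho_{{}^w\sigma})\;\simeq\;{}^{w^\vee}(\varphi_\sigma,\varrho_\sigma)\;=\;{}^{\fr(w)}\fL^{\fs_M}(\sigma),
\]
which is exactly the equivariance statement. The main obstacle I foresee is the surjectivity step: it is not enough to produce a $w$ that sends $\sigma$'s parameter to $\chi^\vee\cdot(\varphi_\sigma,\varrho_\sigma)$; one must also know that parameters within $\Oo^\vee$ detect representations in $\Oo$ up to equivalence, which is what the $\fX_\nr$-equivariant bijection $\Oo\simeq\Oo^\vee$ from Property~\ref{property}(1) supplies. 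If the bijectivity of this orbit map is not asserted strongly enough by Property~\ref{property}(1) alone, one would need to supplement with the observation that $L$-packets inside a single Bernstein orbit of supercuspidals are controlled by the fiber structure of $\fL^{\fs_M}$, as for instance in the regular supercuspidal setting of \cite{Kal-reg}.
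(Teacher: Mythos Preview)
Your approach is essentially the same as the paper's: define $\fr$ as the restriction of the canonical isomorphism $w\mapsto w^\vee$ and use Property~\ref{property}(1),(2) to verify that it carries $W_G^\fs$ into $W_{G^\vee}^{\fs^\vee}$ and back. The paper handles the reverse direction in one sentence (``Reversing the argument, we see that it is an isomorphism''), whereas you spell out what that reversal requires and correctly flag that it relies on $\fL^{\fs_M}$ being injective on the $\fX_\nr(M)$-orbit $\Oo$.

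One comment on that flagged step: your justification via ``matching stabilizers'' is slightly circular as written. From Property~\ref{property}(1) you only immediately get the inclusion $\fX_\nr(M,\sigma)\hookrightarrow\fX_\nr({}^LM,(\varphi_\sigma,\varrho_\sigma))$ (if $\chi\otimes\sigma\simeq\sigma$ then $\chi^\vee\cdot(\varphi_\sigma,\varrho_\sigma)=(\varphi_\sigma,\varrho_\sigma)$); the opposite inclusion is precisely the injectivity statement you are trying to use. So either one assumes injectivity of $\fL^{\fs_M}$ on $\Oo$ as part of the axiomatic input (the paper tacitly does, cf.\ the assertion $\fX_\nr(M^\vee,(\varphi_\sigma,\varrho_\sigma))\simeq\fX_\nr(M,\sigma)$ in the proof of Theorem~\ref{thm:matching-extended-qts}), or one invokes it from the specific LLC being used (e.g.\ singleton $L$-packets). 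Your final paragraph already anticipates exactly this, so the proposal is sound modulo making that hypothesis explicit.
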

\begin{proof}
Let $w\in W_G^{\mathfrak{s}}\subset W(M)$. By the definition of $W^\fs_G$, we have ${}^{w}\sigma\simeq \chi\otimes\sigma$ for some $\chi\in\fX_\nr(M)$.  By 
Property~\ref{property}, we have
\begin{equation}
{}^{w^\vee}(\varphi_\sigma,\varrho_\sigma)\simeq (\varphi_{{}^w\sigma},\varrho_{{}^w\sigma})\simeq
(\varphi_{\chi\otimes\sigma},\varrho_{\chi\otimes\sigma})\simeq \chi^\vee\cdot(\varphi_\sigma,\varrho_\sigma).
\end{equation}
Thus $w^\vee\in W_{G^{\vee}}^{\fs^{\vee}}$, and the map $w\mapsto w^\vee$ gives a group morphism from $W_G^{\fs}$ to $W_{G^{\vee}}^{\fs^{\vee}}$. Reversing the argument, we see that it is an isomorphism.
\end{proof}

We suppose that Property~\ref{property} holds, and that the collections of $2$-cocycles $\natural$ and ${}^L\natural$ satisfy the following
\begin{equation} \label{eqn:matching cocycles}
    {}^L\natural_{\chi^\vee}=\natural_{\chi}\quad\text{for any $\sigma\in\fs$ and any $\chi\in \fX_\nr(M)/\fX_\nr(M,\sigma)$},
\end{equation}

\begin{theorem} \label{thm:matching-extended-qts}
\begin{enumerate}
    \item We have a canonical isomorphism
\begin{equation}
\fe\colon(\Irr^{\fs_M}(M)/\!/W_G^\fs)_{\natural}\overset{\sim}{\longrightarrow}(\Phi_\enh^{\fs_M^\vee}(M)/\!/W^{\fs^\vee}_{G^\vee})_{{}^L\natural}.
\end{equation}
\item 
There is a bijection 
\begin{equation}
\fL:=(\xi_{G^\vee}^{\fs^\vee})^{-1}\circ\fe\circ\xi_G^\fs\colon
\Irr^\fs(G)\longrightarrow\Phi_\enh^{\fs^\vee}(G).
\end{equation}
\end{enumerate}
\end{theorem}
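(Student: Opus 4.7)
The plan is to bootstrap the axiomatic correspondence $\fL^{\fs_M}$ on supercuspidal support into an isomorphism of twisted extended quotients, and then compose with the two Bernstein-style parametrizations to obtain $\fL$.

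For part (1), I would proceed in three stages. First, combining Property~\ref{property}(1) with the parametrization (\ref{eqn:IrrsM}) on the group side and the description (\ref{eqn:cusp}) on the Galois side, both $\Irr^{\fs_M}(M)$ and $\Phi_\enh^{\fs_M^\vee}(M)$ are quotients of $\fX_\nr(M)$ and of $\fX_\nr({}^LM)$ respectively, and the canonical isomorphism $\chi\mapsto\chi^\vee$ transports one to the other, so $\fL^{\fs_M}$ is automatically a bijection of sets. Second, Lemma~\ref{lem:duals} --- which is precisely where Property~\ref{property}(2) is invoked --- yields a group isomorphism $\fr\colon W_G^\fs\xrightarrow{\sim} W_{G^\vee}^{\fs^\vee}$ with respect to which $\fL^{\fs_M}$ is equivariant. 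Hence $\fr$ restricts to an isomorphism of stabilizers $W_G^{\fs,x}\xrightarrow{\sim} W^{\fs^\vee,y}_{G^\vee}$, where $y:=\fL^{\fs_M}(x)$. Third, the cocycle hypothesis (\ref{eqn:matching cocycles}) identifies $\natural_x$ with ${}^L\natural_y$ via $\fr$, which produces an algebra isomorphism $\C[W_G^{\fs,x},\natural_x]\xrightarrow{\sim}\C[W^{\fs^\vee,y}_{G^\vee},{}^L\natural_y]$ and hence a bijection between the corresponding sets of irreducible modules. The assignment $(x,\tau)\mapsto (y,\tau\circ\fr^{-1})$ is then an $\fr$-equivariant bijection of twisted-pair sets, and passing to orbits yields the desired $\fe$.

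Part (2) follows formally: $\xi_G^\fs$ from Proposition~\ref{prop:bijectionsS}, $\fe$ from part (1), and the inverse of $\xi_{G^\vee}^{\fs^\vee}$ from (\ref{eqn:xiGv}) are all bijections whose composition carries $\Irr^\fs(G)$ to $\Phi_\enh^{\fs^\vee}(G)$.

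The main obstacle is bookkeeping rather than substance: one must verify that the conjugation isomorphisms $\phi_{\gamma,x}$ of \S\ref{twisted-extended-quotient}, subject to conditions (a) and (b) there, are transported correctly by $\fr$ and by the cocycle matching. Since on both sides the $\phi_{\gamma,x}$'s are built from conjugation inside the respective extended Weyl groups and $\fr$ is a genuine group isomorphism, this compatibility is automatic, but it must be spelled out so that $\fe$ is well-defined on the quotient. Without the matching (\ref{eqn:matching cocycles}), $\fe$ would descend only to an isomorphism of untwisted extended quotients, which would not suffice for part (2).
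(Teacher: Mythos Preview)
Your proposal is correct and follows essentially the same approach as the paper: identify $\Irr^{\fs_M}(M)$ with $\Phi_\enh^{\fs_M^\vee}(M)$ via Property~\ref{property}(1), use Lemma~\ref{lem:duals} to match stabilizers, invoke the cocycle hypothesis (\ref{eqn:matching cocycles}) to pass to the twisted extended quotients, and then compose with $\xi_G^\fs$ and $(\xi_{G^\vee}^{\fs^\vee})^{-1}$. If anything, your treatment is slightly more explicit than the paper's about why the $\phi_{\gamma,x}$ compatibilities of \S\ref{twisted-extended-quotient} are preserved and where exactly (\ref{eqn:matching cocycles}) enters.
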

\begin{proof}
Let $\fs=[M,\sigma]_G$. Consider $(\varphi_\sigma,\varrho_\sigma):=\fL^{\fs_M}(\sigma)$. Then the isomorphism $\fX_\nr(M)\simeq\fX_\nr({}^LM)$ 
combined with (\ref{eqn:cusp}) shows that
\begin{equation}
\Phi_\enh^{\fs^\vee_M}(M)=\left\{\chi^\vee\otimes (\varphi_\sigma,\varrho_\sigma)\,:\,\chi \in \fX_\nr(M)\right\}.
\end{equation}
By Property~\ref{property}(1) and \eqref{eqn:IrrsM}, we have 
\begin{equation}
\Phi_\enh^{\fs^\vee_M}(M)=\left\{(\varphi_{\chi\otimes\sigma},\varrho_{\chi\otimes\sigma})\,:\,\chi \in \fX_\nr(M)\right\}\simeq
\Irr^{\fs_M}(M).
\end{equation}
Recall that $W_G^{\fs,\chi\otimes\sigma}$ denotes the stabilizer of $\chi\otimes\sigma$ in $W_G^\fs$. By construction of the extended quotients in \eqref{twisted-extended-quotient}, we have 
\begin{equation}\label{extended-quotient-group-side}
(\Irr^{\fs_M}(M)/\!/W_G^\fs)_\natural=\bigsqcup_{\chi\in\fX_\nr(M)/\fX_\nr(M,\sigma)}(\Irr(\C[W_G^{\fs,\chi\otimes\sigma},\natural_{\chi}])/W_G^\fs.
\end{equation}
For $\chi\in\fX_\nr(M)$, let $W_{G^\vee}^{\fs^\vee,\chi^\vee\cdot(\varphi_\sigma,\varrho_\sigma)}$ denote the stabilizer of $\chi^\vee\cdot(\varphi_\sigma,\varrho_\sigma)$ in $W_{G^\vee}^{\fs^\vee}$. 
By Property~\ref{property}(1), we have
\begin{equation}
\fX_\nr(M^\vee,(\varphi_\sigma,\varrho_\sigma))\simeq \fX_\nr(M,\sigma).
\end{equation}
Again by \eqref{twisted-extended-quotient}, we have
\begin{equation}\label{extended-quotient-Galois-side}
(\Phi_\enh^{\fs_M^\vee}(M)/\!/W^{\fs^\vee}_{G^\vee})_{{}^L\natural}=\bigsqcup_{\chi^\vee\in\fX_\nr(M^\vee)/\fX_\nr(M^\vee,(\varphi_\sigma,\varrho_\sigma))}\Irr(\C[W_{G^\vee}^{\fs^\vee,\chi^\vee\cdot(\varphi_\sigma,\varrho_\sigma)},{}^L\natural_{\chi^\vee}])/W^{\fs^\vee}_{G^\vee}.
\end{equation}
For any $w\in W_G^{\fs,\chi\otimes\sigma}$,  by Property~\ref{property}, we have
\begin{equation}
{}^{w^\vee}(\chi^\vee\cdot(\varphi_\sigma,\varrho_\sigma))=
{}^{w^\vee}(\varphi_{\chi\otimes\sigma},\varrho_{\chi\otimes\sigma})=(\varphi_{{}^w(\chi\otimes\sigma)},\varrho_{{}^w(\chi\otimes\sigma)}=(\varphi_{\chi\otimes\sigma},\varrho_{\chi\otimes\sigma}).
\end{equation}
Thus the morphism $\fr$ from \eqref{defining-frak-r} restricts to an isomorphism 
\begin{equation}
    W_G^{\fs,\chi\otimes\sigma}\xrightarrow{\sim}W_{G^\vee}^{\fs^\vee,\chi^\vee\cdot(\varphi_\sigma,\varrho_\sigma)}.
\end{equation}
Combined with \eqref{extended-quotient-group-side} and \eqref{extended-quotient-Galois-side}, we obtain an isomorphism
\begin{equation}
\fe\colon(\Irr^{\fs_M}(M)/\!/W_G^\fs)_\natural\longrightarrow(\Phi_\enh^{\fs_M^\vee}(M)/\!/W^{\fs^\vee}_{G^\vee})_{{}^L\natural}.
\end{equation} 
Then (2) follows from the combination of (1) with  Proposition~\ref{prop:bijectionsS} and \eqref{eqn-xiGv}.
\end{proof}
\begin{remark} \label{rem:Mou}
When $G$ is a split classical group, $F$ has characteristic zero and $\fL^{\fs_M}$ is the LLC defined by Arthur in \cite{Arthurbook}, then  Lemma~\ref{lem:duals} was proved in \cite[Theorem~4.1]{Mou}, and Theorem~\ref{thm:matching-extended-qts} follows from \cite[\S3.2 \& 3.3]{Moussaoui-CM}.
\end{remark}
\begin{remark} \label{rem:triviality cocycles}
The $2$-cocycles in $\natural$ and ${}^L\natural$ are expected to be often trivial:
(1) They are trivial if the groups $R(\Oo)$ from \eqref{eqn:WMO} and $R(\Oo^\vee)$ from \eqref{eqn:decWdual} have cardinality at most $2$, and hence when $M$ is a Levi subgroup of a maximal parabolic subgroup of $G$. 

(2) The $2$-cocycles are also trivial when $G$ is a symplectic group or a split special orthogonal group  by \cite[Theorem~7.7]{Heiermann-intertwining-operators-Hecke-algebras} on the group side and \cite[\S4.5]{Moussaoui-CM} on the Galois side. They are trivial for principal series representations of split groups by \cite[Corollary~7.9 and Theorem~8.2]{Roche-Hecke-algebra} on the group side and \cite[Theorem~13.1]{ABPS-disc} on the Galois side. 

(3) However, there exist cases when these $2$-cocycles are not trivial: e.g.~see \cite[Example~5.5]{ABPS-Jussieu} for an example where $\natural$ is non-trivial, and \cite[Example~9.4]{AMS1} for an example where ${}^L\natural$ is non-trivial.
\end{remark}

\begin{numberedparagraph}
Let $\sigma$ be a regular supercuspidal irreducible representation of $M$. Let $\varphi_\sigma\colon W_F\to {}^LM$ be the $L$-parameter of $\sigma$ as constructed in \cite{Kal-ns}. 
\begin{prop} \label{thm:LLC-singleton} 
Let $\fs=[M,\sigma]_G$ be such that the $L$-packet for $\sigma$ is a singleton. 
Let $\fs^\vee=[M^\vee,(\varphi_\sigma,1)]_{G^\vee}$. Assume that the collections of $2$-cocycles $\natural$ and ${}^L\natural$ are both trivial. 
We have the following bijection 
\begin{equation}
\fL\colon \Irr^\fs(G)\xlongrightarrow{}\Phi_\enh^{\fs^\vee}(G).
\end{equation}
\end{prop}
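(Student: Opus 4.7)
The plan is to reduce directly to Theorem~\ref{thm:matching-extended-qts} applied to the map
\[
\fL^{\fs_M}\colon \Irr^{\fs_M}(M)\longrightarrow \Phi_\enh^\cusp(M),\qquad \chi\otimes\sigma\mapsto (\varphi_{\chi\otimes\sigma},1),
\]
where $\varphi_{\chi\otimes\sigma}$ is the $L$-parameter produced by Kaletha's construction in \cite{Kal-ns} (which applies since $\sigma$, and hence every element of $\Irr^{\fs_M}(M)$, is regular). Since the $L$-packet of $\sigma$ is a singleton, the only enhancement appearing on the Galois side is the trivial one, so $\varrho_{\chi\otimes\sigma}=1$ is forced and the target really is a single orbit under $\fX_\nr({}^LM)$. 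The main task is then to verify Property~\ref{property} for this assignment, which is exactly the content of Proposition~\ref{prop:singleton}.

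For Property~\ref{property}(1), I would argue that Kaletha's construction is compatible with unramified twists. Concretely, the $L$-parameter attached to a regular supercuspidal is built from a tame elliptic pair $(\bS,\theta)$ via a fixed $\chi$-data; replacing $\sigma$ by $\chi\otimes\sigma$ replaces $\theta$ by $\theta\cdot(\chi|_S)$, which on the dual side amounts to multiplying the value of the $L$-parameter on $\Frob_F$ by the image of $\chi$ under the canonical isomorphism $\fX_\nr(M)\simeq \fX_\nr({}^LM)$. This gives the identity $(\varphi_{\chi\otimes\sigma},1)=\chi^\vee\cdot(\varphi_\sigma,1)$.

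For Property~\ref{property}(2), since the packet is a singleton, Remark~\ref{rem:Desiderata}(2) tells us that the desired $W(M)$-equivariance is equivalent to Borel's Desideratum~10.3.(5) for the isomorphism $c_g\colon (M,\sigma)\to(M,{}^g\sigma)$ induced by $g\in\Nor_\bG(\bM)(F)$. This amounts to the statement that composing $\varphi_\sigma$ with the dual automorphism ${}^Lc_g$ produces an $L$-parameter whose singleton packet is exactly $\{{}^g\sigma\}$, which follows from the functoriality of Kaletha's construction with respect to the action of $\Nor_\bG(\bM)(F)$ on tame regular pairs $(\bS,\theta)$. The main (conceptual) obstacle is precisely the verification of this conjugation-equivariance; in the singleton case it is however exactly a known functoriality property of the regular supercuspidal LLC.

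Once Property~\ref{property} is established for $\fL^{\fs_M}$, the hypothesis that $\natural$ and ${}^L\natural$ are both trivial makes the compatibility \eqref{eqn:matching cocycles} automatic. Theorem~\ref{thm:matching-extended-qts} then produces the canonical isomorphism
\[
\fe\colon \Irr^{\fs_M}(M)/\!/W_G^\fs\xrightarrow{\ \sim\ }\Phi_\enh^{\fs_M^\vee}(M)/\!/W_{G^\vee}^{\fs^\vee},
\]
and composing with the bijections $\xi_G^\fs$ of Proposition~\ref{prop:bijectionsS} and $\xi_{G^\vee}^{\fs^\vee}$ of \eqref{eqn:xiGv} yields the desired bijection $\fL=(\xi_{G^\vee}^{\fs^\vee})^{-1}\circ\fe\circ\xi_G^\fs\colon\Irr^\fs(G)\to\Phi_\enh^{\fs^\vee}(G)$.
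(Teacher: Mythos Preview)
Your proof follows essentially the same route as the paper's: define $\fL^{\fs_M}\colon\sigma\mapsto(\varphi_\sigma,1)$, verify Property~\ref{property}(1) and (2), observe that the cocycle hypothesis \eqref{eqn:matching cocycles} is automatic, and invoke Theorem~\ref{thm:matching-extended-qts}. For Property~\ref{property}(1), the paper packages your unramified-twist argument as Proposition~\ref{nonsing-property-1}; your sketch is the same idea. For Property~\ref{property}(2), the paper also reduces via Remark~\ref{rem:Desiderata}(2) to Borel's Desideratum~10.3(5), but then cites a specific theorem---\cite[Theorem~1.1]{Bourgeois-Mezo}---which establishes the desideratum for supercuspidal $L$-parameters of quasi-split groups. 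Your phrase ``follows from the functoriality of Kaletha's construction'' is pointing at the right statement, but you should name the reference rather than leave it as an asserted ``known'' fact; this is the one place where a reader might reasonably ask for a citation.
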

\begin{proof}
Let $\fL^{\fs_M}$ be the map
\begin{equation}
\fL^{\fs_M}\colon\sigma\mapsto (\varphi_\sigma,1).
\end{equation}
By Proposition~\ref{nonsing-property-1}, Property~\ref{property}(1) is satisfied.

The validity of \cite[Desideratum 10.3(5)]{Bor} has been established in \cite[Theorem~1.1]{Bourgeois-Mezo} when the $L$-parameter $\widetilde\varphi$ is supercuspidal, and $G$ is quasi-split. Since the  
$L$-packet for $\sigma$ is a singleton, 
 Property~\ref{property}~(2) holds by Remark~\ref{rem:Desiderata}. 
 The result thus follows from Corollary \ref{thm:matching-extended-qts}.
\end{proof}
\begin{remark}
Proposition \ref{thm:LLC-singleton} is sufficient for the case of $G=\rG_2$ (as also exemplified in \cite{AX-LLC}), since $M$ is either $\GL_2$ or a torus, both only having singleton $L$-packets for their supercuspidals. 
\end{remark}
\end{numberedparagraph}

\begin{numberedparagraph}\label{non-sing-Bernstein-blocks-property1}
Suppose now $\sigma$ is a \textit{non-singular} supercuspidal irreducible representation of $M$. Let $\varphi_{\sigma}\colon W_F\to {}^LM$ be the $L$-parameter of $\sigma$ defined in \cite[\S 4.1]{Kal-ns}, with enhancement $\varrho_{\sigma}$. 

More precisely, consider the $\pi_{(\bS,\theta,\varrho)}^{\epsilon}$ recalled in $\S$\ref{nonsing-gp-side-notations}. We fix a coherent splitting $\epsilon$. We recall the construction of the non-singular $L$-parameter $\varphi_{(\bS,\theta)}$ in \cite[\S 4.1]{Kal-ns} (we drop $\varrho$ from the notation as the $L$-parameter does not depend on $\varrho$), which is given as a composition $W_F\xrightarrow{\varphi_S} {}^LS\xrightarrow{{}^Lj}{}^LG$. Here $\varphi_S\colon W_F\to {}^LS$ is Langlands parameter for the character $\theta$, and ${}^Lj\colon{}^LS\to {}^LG$ is a certain $L$-embedding arising from the $\chi$-data as part of some \textit{torally wild $L$-packet datum} $(\bS,\widehat{j},\chi_0,\theta)$ as \textit{loc.cit.}. 
\begin{prop}\label{nonsing-property-1}
The map $\mathfrak{L}^{\fs_M}\colon\sigma\mapsto (\varphi_{\sigma},\varrho_{\sigma})$ satisfies Property \ref{property}~(1).  
\end{prop}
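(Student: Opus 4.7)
The plan is to verify Property~\ref{property}(1) by directly inspecting Kaletha's construction recalled just before the statement. Write $\varphi_\sigma={}^Lj\circ\varphi_S$, where $\varphi_S\colon W_F\to{}^LS$ is the LLC-for-tori parameter attached to $\theta$ and ${}^Lj$ is the $L$-embedding built from the $\chi$-data of the torally wild datum $(\bS,\widehat{j},\chi_0,\theta)$. The first step is to identify the datum attached to $\chi\otimes\sigma$: since the torus $\bS$, its admissible embedding $\widehat{j}$, and the auxiliary $\chi$-data depend only on the tame structure of $\bS$ and not on $\theta$ itself, the twisted representation $\chi\otimes\sigma$ corresponds to the datum $(\bS,\widehat{j},\chi_0,\chi|_S\cdot\theta)$ with the same coherent splitting $\epsilon$. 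One then checks that the restriction $\chi|_S$ is stable under the Weyl group action on characters of $\bS(k_F)$, so that $\Nor_\bG(\bS)(k_F)_{\chi|_S\cdot\theta}=\Nor_\bG(\bS)(k_F)_\theta$, which yields $\varrho_{\chi\otimes\sigma}=\varrho_\sigma$ after identifying the stabilizers.

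The claim then reduces to showing $\varphi_{\chi\otimes\sigma}=\chi^\vee\cdot\varphi_\sigma$ as cocycles into ${}^LM$. By multiplicativity of LLC for tori, the parameter of $\chi|_S\cdot\theta$ equals $\varphi_{\chi|_S}\cdot\varphi_\theta$. The canonical isomorphism $\fX_\nr(M)\xrightarrow{\sim}\fX_\nr({}^LM)$ of \cite[\S 3.3.1]{Haines} is constructed so as to intertwine the restriction $\chi\mapsto\chi|_S$ on the group side with the corresponding restriction on the dual side, so that $\varphi_{\chi|_S}=\chi^\vee|_{{}^LS}$. Since $\rZ_{M^\vee\rtimes I_F}^\circ$ sits inside $S^\vee$ and the Langlands--Shelstad $L$-embedding ${}^Lj$ is the identity on this central subgroup, one obtains
\[
    \varphi_{\chi\otimes\sigma}={}^Lj\circ(\chi^\vee|_{{}^LS}\cdot\varphi_\theta)=\chi^\vee\cdot({}^Lj\circ\varphi_\theta)=\chi^\vee\cdot\varphi_\sigma,
\]
as desired.

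The main obstacle is the central-compatibility step for ${}^Lj$: one must check that the normalization of the $\chi$-data in \cite{Kal-ns} does not interfere with central unramified twists. Although this is essentially a formal property of the Langlands--Shelstad construction of $L$-embeddings, tracking it through both the depth-zero construction and its extension to positive depth (via Yu's construction applied to $\pi_{(G^0,\bS,\phi_{-1})}$, together with the twists by $\epsilon_y^{M^i/M^{i-1}}$ from \cite{Fintzen-Kaletha-Spice}) requires some care. In particular, one needs to verify that distributing the twist along the Howe factorization only modifies the depth-$r_d$ component and does not disturb the genericity conditions imposed on $\phi_0,\ldots,\phi_{d-1}$, which holds because $\chi^\vee$ lies in the center of $M^\vee\rtimes I_F$ and hence only acts on the component of the parameter that is insensitive to generic depths.
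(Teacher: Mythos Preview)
Your approach is essentially the same as the paper's: identify $\chi\otimes\sigma$ with the datum obtained by replacing $\theta$ by $\chi\cdot\theta$ while keeping ${}^Lj$ fixed, then use multiplicativity of LLC for tori to conclude $\varphi_{\chi\otimes\sigma}=\chi^\vee\cdot\varphi_\sigma$. The paper's proof is much terser, however: it simply invokes \cite[Proposition~3.4.6]{Kal-ns} for the identity $\chi\otimes\pi_{(\bS,\theta,\varrho)}^{\epsilon}=\pi_{(\bS,\chi\cdot\theta,\chi\otimes\varrho)}^{\epsilon}$ on the representation side, and \cite[Proposition~4.5.3]{Kaletha-epipelagic} for the statement that ${}^Lj$ is unchanged under the twist and hence $\varphi_{(\bS,\chi\cdot\theta)}={}^Lj\circ(\varphi_\chi\cdot\varphi_S)$. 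Your expanded argument is in effect a sketch of what those cited propositions assert, so the ``obstacle'' you flag concerning central compatibility of ${}^Lj$ is precisely what Kaletha's Proposition~4.5.3 handles. One small point: note that \cite[Proposition~3.4.6]{Kal-ns} gives $\chi\otimes\varrho$ rather than $\varrho$ for the enhancement datum on the group side; since $\chi$ is unramified it restricts trivially to the compact stabilizer, so this agrees with your claim $\varrho_{\chi\otimes\sigma}=\varrho_\sigma$, but you should say so explicitly.
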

\begin{proof}
By \cite[Proposition 3.4.6]{Kal-ns}, we have $\chi\otimes\pi_{(\bS,\theta,\varrho)}^{\epsilon}=\pi_{(S,\chi\cdot\theta,\chi\otimes\rho)}^{\epsilon}$. On the other hand, by \cite[Proposition 4.5.3]{Kaletha-epipelagic}, we have $\varphi_{(\bS,\chi\cdot\theta)}=(\varphi_{\chi}\cdot\varphi_S)\circ{}^Lj$ (since the $L$-embedding ${}^Lj$ stays the same after twisting by $\chi$), where $\varphi_{\chi}\colon W_F\to \rZ_{\widehat{G}}$ is the corresponding $1$-cocycle as defined \textit{loc.cit.} 
\end{proof}

\begin{remark}\label{nonsing-property-2}
Property \ref{property} (2) of the map $\mathfrak{L}^{\fs_M}\colon\sigma\mapsto (\varphi_{\sigma},\varrho_{\sigma})$ follows from \cite[Conjecture 2.12]{Kaletha-ICM-2022}, which is expected to hold for LLC for non-singular supercuspidal representations. The authors intend to return to this question in future work. 
\end{remark}

\end{numberedparagraph}

\subsection{Matching of simple modules of extended affine Hecke algebras} \label{subsec:EAH}
In this section, we use Corollary~\ref{thm:matching-extended-qts} to obtain a bijection between simple modules of extended affine Hecke algebras for the $p$-adic group and Galois sides, assuming Property~\ref{property}.
Note that this is a completely reasonable assumption, as many groups satisfy this property. Therefore, our main results give a new approach to constructing local Langlands correspondances ``inductively'', building from LLC's on the Levi subgroups'.

\begin{numberedparagraph}
We now recall the construction of a (possibly twisted) extended affine Hecke algebra $\mcH^{\fs^\vee}(G^\vee)$ constructed in \cite{AMS3}. Consider
\begin{equation} \label{eqn:Mphi}
\mcM_{\varphi_{\cus}}:=\Cent_{M^\vee}(\varphi_\cus(W_F)).
\end{equation} 
Let $\mcA_{\varphi_\cus}$ be the identity component of the center of $\mcM_{\varphi_\cus}$. We set
\begin{equation} \label{eqn:Jphi}
\mcJ_\varphi=\mcJ_\varphi^{G^\vee}:=\rZ_{G^\vee}(\varphi(I_F)).
\end{equation}
Let $\Sigma(\mcJ_\varphi^\circ,\mcA_{\varphi_\cus})$ be the set of $\alpha \in X^*(\mcA_{\varphi_\cus}) \setminus \{0\}$ which appear in the adjoint action of $\mcA_{\varphi_\cus}$ on the Lie algebra of $\mcJ_\varphi^\circ$. It is a root system by \cite[Proposition~3.9]{AMS3}. Let $\Sigma(\mcJ_\varphi^\circ,\mcA_{\varphi_\cus})^+$ be the positive root system defined by an $F$-rational Borel subgroup  of $\mcJ_\varphi^\circ$. 
Let $\Delta$ be a basis of the reduced root system $\Sigma(\mcJ_\varphi^\circ,\mcA_{\varphi_\cus})_\red$. 
Let  $a\in\mcA_{{\varphi_\cus}}$ be such that $\alpha(a^{-1})$ is an eigenvalue of $\Ad(\varphi(\Frob))$ for any $\alpha\in\Delta$. We define $\varphi_a\in\Phi(M)$ by
\begin{equation} \label{eqn:varphi_t}
\varphi_a|_{I_F\times\SL_2(\C)}:=\varphi_\cus|_{I_F\times\SL_2(\C)}\quad\text{and}\quad
\varphi_a(\Frob_F):=a\cdot\varphi_\cus(\Frob_F).
\end{equation}
By \cite[Proposition~3.9]{AMS3}, we have $\Sigma(\mcG_{\varphi_a},\mcA_{{\varphi_\cus}})_\red=
\Sigma(\mcJ_\varphi^\circ,\mcA_{\varphi_\cus})_\red$, where  $\mcG_{{\varphi_a}}=\Cent_{G^\vee}(\varphi_a(W_F))$. Consider
\begin{equation} \label{eqn:unrset}
\fX_\nr(M^\vee,\varphi_a):=\left\{x\in \fX_\nr(M^\vee)\,:\,(z\varphi_a)_{M^\vee}=(\varphi_a)_{M^\vee}\right\}.
\end{equation}
Set $T_{\Oo^\vee}:=\fX_\nr(M^\vee)/\fX_\nr(M^\vee,\varphi_a)$. For each $\alpha\in \Sigma(\mcJ_\varphi^\circ,\mcA_{\varphi_\cus})_\red$, let $m_\alpha\in\Z_{>0}$ be the smallest integer such that
\begin{equation}
\ker(m_\alpha\alpha)\supset \left\{a' \in \mcA_{{\varphi_\cus}}\,:\, (a'\varphi_a)_{M^\vee}=(\varphi_a)_{M^\vee}\right\}.
\end{equation}
We set 
\begin{equation} \label{eqn:ROdual}
\Sigma_{\Oo^\vee}:=\left\{m_\alpha\,:\, \alpha \in \Sigma(\mcJ_\varphi^\circ,\mcA_{\varphi_\cus})_\red\right\}\subset X^*(T_{\Oo^\vee}).
\end{equation}
Then $W_{\Oo^\vee}$ from \eqref{eqn:decWdual} is the finite Weyl group of $\Sigma_{\Oo^\vee}$, and
\begin{equation}\label{defining-ROcheck} 
R(\Oo^\vee): = 
\left\{ w \in W(M^{\vee},\Oo^\vee)  \,:\, w \cdot \Sigma(\mcJ_\varphi^\circ,\mcA_{\varphi_\cus})^+ = \Sigma(\mcJ_\varphi^\circ,\mcA_{\varphi_\cus})^+\right\}.
\end{equation}
Let 
\begin{equation}
\lambda^\vee\colon \Sigma_{\Oo^\vee}\to \Z_{\geq 0}\quad\text{and}\quad \lambda^{* \vee} \colon \left\{m_\alpha\alpha\in\Sigma_{\Oo^\vee}:(m_\alpha\alpha)^\vee\in 2X_*(T_{\fs^\vee})\right\}\to \Z_{\geq 0}
\end{equation}
be the two parameter functions defined in the proof of \cite[Lemma~3.12]{AMS3}.
Recall from \cite[(28)]{AMS3} that $\lambda^{* \vee}(\alpha)=\lambda^\vee(\alpha)$ unless $\alpha$ is a short root in a type $B$ root subsystem of $R_{\Oo^\vee}$.

The algebra $\mcH^{\fs^\vee}(G^\vee)$ is defined to be
\begin{equation} \label{eqn:dualHeckealgebra}
\mcH^{\fs^\vee}(G^\vee):=
\mcH_{\mathrm{aff}}(\Oo^\vee,\Sigma_{\Oo^\vee},\lambda^\vee,\lambda^{\vee *},z)\rtimes \C[R(\Oo^\vee),\kappa^\vee],
\end{equation}
where 
$z$ is a positive real number,
$\mcH_{\mathrm{aff}}(G^\vee,\fs^\vee):=\mcH_{\mathrm{aff}} (\Oo^\vee,\Sigma_{\Oo^\vee},\lambda^\vee ,\lambda^{* \vee},z)$ is
the corresponding affine Hecke algebra with affine Weyl group $W_{\Oo^\vee}\ltimes X^*(T_{\Oo^\vee})$, and $\kappa^\vee$ a $2$-cocycle on $R(\Oo^\vee)$.
\end{numberedparagraph}

\begin{theorem}\label{matching-gp-Galois-sides}
We suppose that the collections of $2$-cocycles $\natural$ and ${}^L\natural$ are both trivial.
Let $\fs=[M,\sigma]_G$ and  $\fs^\vee=[{}^LM,\varphi_\sigma,1]_{G^\vee}$. There is a bijection
\begin{equation}
\Irr(\mcH^\fs(G))\,\longleftrightarrow\,\Irr(\mcH^{\fs^\vee}(G^\vee)).
\end{equation}
\end{theorem}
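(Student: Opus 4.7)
The plan is to obtain the bijection as a composition of three intermediate bijections, both sides of which reduce to a common twisted extended quotient. On the $p$-adic group side, Corollary~\ref{cor:two_algebras} applied to $\fs$ immediately yields a bijection
\begin{equation*}
\Irr(\mcH^\fs(G)) \longleftrightarrow (\Irr^{\fs_M}(M)/\!/W_G^\fs)_\natural.
\end{equation*}

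Next, I would apply Theorem~\ref{thm:matching-extended-qts}(1) to transport this to the Galois side via the canonical isomorphism
\begin{equation*}
\fe\colon (\Irr^{\fs_M}(M)/\!/W_G^\fs)_\natural \xrightarrow{\sim} (\Phi_\enh^{\fs_M^\vee}(M)/\!/W_{G^\vee}^{\fs^\vee})_{{}^L\natural}.
\end{equation*}
The hypotheses of Theorem~\ref{thm:matching-extended-qts}, namely Property~\ref{property}(1) and (2) for the map $\fL^{\fs_M}\colon \sigma \mapsto (\varphi_\sigma, 1)$, hold in the singleton $L$-packet setting by Proposition~\ref{thm:LLC-singleton}, and in the non-singular setting by Proposition~\ref{nonsing-property-1} together with Remark~\ref{nonsing-property-2}. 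The triviality assumption on both $\natural$ and ${}^L\natural$ simplifies the matching condition (\ref{eqn:matching cocycles}) to a vacuous one.

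The crucial third step is a Galois-side analogue of Corollary~\ref{cor:two_algebras}, namely a bijection
\begin{equation*}
\Irr(\mcH^{\fs^\vee}(G^\vee)) \longleftrightarrow (\Phi_\enh^{\fs_M^\vee}(M)/\!/W_{G^\vee}^{\fs^\vee})_{{}^L\natural}.
\end{equation*}
Since ${}^L\natural$ is trivial, so is its restriction $\kappa^\vee$ to $R(\Oo^\vee)$, and (\ref{eqn:dualHeckealgebra}) collapses to the untwisted crossed product
\begin{equation*}
\mcH^{\fs^\vee}(G^\vee) = \mcH_\aff(\Oo^\vee, \Sigma_{\Oo^\vee}, \lambda^\vee, \lambda^{*\vee}, z) \rtimes \C[R(\Oo^\vee)].
\end{equation*}
The Langlands-style parametrization of simple modules of such an extended affine Hecke algebra, carried out in \cite{AMS3} precisely en route to establishing the bijection $\xi_{G^\vee}^{\fs^\vee}$ of \cite[Theorem~9.3]{AMS1}, takes values in exactly the twisted extended quotient above, via the factorization $\Irr(\mcH^{\fs^\vee}(G^\vee)) \leftrightarrow \Phi_\enh^{\fs^\vee}(G) \xrightarrow{\xi_{G^\vee}^{\fs^\vee}} (\Phi_\enh^{\fs_M^\vee}(M)/\!/W_{G^\vee}^{\fs^\vee})_{{}^L\natural}$. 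Composing the three bijections gives the desired matching.

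The main obstacle lies in this third step: one must verify that the Kazhdan--Lusztig--style parametrization of simple modules of the extended affine Hecke algebra, via tempered data plus enhancement characters of isotropy groups, matches bijectively with the component-group enhancements $\varrho$ attached to the enhanced $L$-parameters in $\Phi_\enh^{\fs^\vee}(G)$, not merely at the level of counting. This is the technical heart of \cite{AMS3}, where the parameter functions $\lambda^\vee, \lambda^{*\vee}$ are carefully calibrated against the geometry of the cuspidal local systems on unipotent classes in $\mcJ_\varphi^\circ$; the triviality of $\kappa^\vee$ is exactly what removes any residual projective $R(\Oo^\vee)$-twist that would otherwise force compensating normalizations on both sides.
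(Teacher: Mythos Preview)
Your proof is correct and follows essentially the same route as the paper: Corollary~\ref{cor:two_algebras} on the group side, Theorem~\ref{thm:matching-extended-qts}(1) in the middle, and on the Galois side the composition of $\xi_{G^\vee}^{\fs^\vee}$ from (\ref{eqn:xiGv}) with the parametrization $\Phi_\enh^{\fs^\vee}(G)\xrightarrow{\sim}\Irr(\mcH^{\fs^\vee}(G^\vee))$ of \cite[Theorem~3.18]{AMS3}. The paper presents these as four separate steps rather than bundling the last two, but the content is identical; note also that Property~\ref{property} is a standing assumption in \S\ref{subsec:EAH}, so your discussion of when it holds is not needed here, and the bijection from \cite{AMS3} does not require $\kappa^\vee$ to be trivial, so that remark is superfluous as well.
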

\begin{proof}
By Corollary~\ref{cor:two_algebras}, we have a bijection
\begin{equation}\label{eqn-two-algebras}
\Irr(\mcH^\fs(G))\longrightarrow(\Irr^{\fs_M}(M)/\!/W_G^\fs)_\natural.
\end{equation}
By Corollary~\ref{thm:matching-extended-qts}, we have
\begin{equation}\label{eqn-lem-duals}
\Irr^{\fs_M}(M)/\!/W_G^\fs\simeq \Phi_\enh^{\fs_M^\vee}(M)/\!/W^{\fs^\vee}_{G^\vee}.
\end{equation}
On the other hand, by (\ref{eqn:xiGv}) we have a bijection 
 \begin{equation}\label{eqn-xiGv} 
 \Phi_\enh^{\fs_M^\vee}(M)/\!/W^{\fs^\vee}_{G^\vee}\longrightarrow \Phi_\enh^{\fs^\vee}(G).
 \end{equation} 
Finally, by \cite[Theorem~3.18]{AMS3}, there is a bijection 
\begin{equation}\label{eqn-AMS3-3.18}
\Phi_\enh^{\fs^\vee}(G)\xlongrightarrow{\sim} \Irr(\mcH^{\fs^\vee}(G^\vee)).
\end{equation}
Combining equations \eqref{eqn-two-algebras}, \eqref{eqn-lem-duals}, \eqref{eqn-xiGv} and \eqref{eqn-AMS3-3.18}, we obtain the desired bijection. 
\end{proof}

\begin{Coro} With the same assumption as in Theorem \ref{matching-gp-Galois-sides}. 
\[\Irr(\mcH^{\fs^{\vee}}(G^{\vee})) \longrightarrow\Irr(\mcH^{(\fs^0)^{\vee}}((G^0)^{\vee})).\]
\end{Coro}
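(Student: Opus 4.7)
The plan is to chain together three bijections that are already established in the excerpt. We apply Theorem~\ref{matching-gp-Galois-sides} to the pair $(G,\fs)$ to get a bijection
\[
\Irr(\mcH^{\fs^{\vee}}(G^{\vee}))\;\longleftrightarrow\;\Irr(\mcH^{\fs}(G)),
\]
then Theorem~\ref{thm:xiGG0}(2) to get
\[
\Irr(\mcH^{\fs}(G))\;\longleftrightarrow\;\Irr(\mcH^{\fs^0}(G^0)),
\]
and finally Theorem~\ref{matching-gp-Galois-sides} applied to the pair $(G^0,\fs^0)$ to obtain
\[
\Irr(\mcH^{\fs^0}(G^0))\;\longleftrightarrow\;\Irr(\mcH^{(\fs^0)^{\vee}}((G^0)^{\vee})).
\]
Composing these three bijections yields the desired correspondence.

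The mild point to check is that the hypotheses of Theorem~\ref{matching-gp-Langlandsides}, namely triviality of $\natural$ and ${}^L\natural$, Property~\ref{property}, and regularity of $\fs$, all transfer to $\fs^0$. This is essentially built into the setup of \S\ref{subsec:ns}: the Kim-Yu construction starts from the depth-zero datum $\mcD^0$ on $G^0$, and the pair $(\bS,\phi_{-1})$ that gives rise to $\sigma^0$ is obtained by restriction from the data producing $\sigma$, so if $\sigma$ is regular (resp.\ non-singular) then so is $\sigma^0$. Regarding the cocycles, by Lemma~\ref{lem:GG0} the group $W_{G^0}^{\fs^0}$ is isomorphic to $W_G^\fs$, and by Remark~\ref{rem:triviality cocycles}(1) triviality depends only on the structure of $R(\Oo)$; the identification of these $R$-groups is compatible with passage from $G$ to $G^0$.

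Thus the proof reduces to invoking the three theorems in sequence; no further work is required. The main obstacle, if any, is bookkeeping: one needs to verify that the dual inertial class associated to $\fs^0$ under Property~\ref{property} is exactly $(\fs^0)^\vee$, i.e.\ that the map $\fL^{\fs_{M^0}}\colon\sigma^0\mapsto(\varphi_{\sigma^0},\varrho_{\sigma^0})$ is compatible with the map $\fL^{\fs_M}$ via the isomorphism $\fff$ of \eqref{eqn:map_f}. This compatibility, however, is exactly the content of the construction of the non-singular $L$-parameter in \cite{Kal-ns}, where $\varphi_\sigma$ is built from the $L$-parameter $\varphi_{\sigma^0}$ of the depth-zero piece through a fixed $L$-embedding ${}^Lj$; hence the assignment commutes with passage to $G^0$. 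Putting everything together delivers the bijection claimed in the corollary.
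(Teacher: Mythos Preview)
Your approach is essentially identical to the paper's: the authors' one-line proof simply says ``this follows from combining Theorems~\ref{matching-gp-Galois-sides} and~\ref{thm:xiGG0},'' which is exactly your chain of three bijections (with Theorem~\ref{matching-gp-Galois-sides} used twice, once for $(G,\fs)$ and once for $(G^0,\fs^0)$). Your additional paragraph verifying that the hypotheses transfer to $\fs^0$ goes beyond what the paper spells out and is reasonable bookkeeping, though note the typo ``\texttt{matching-gp-Langlandsides}'' for ``\texttt{matching-gp-Galois-sides},'' and that regularity is a hypothesis of Theorem~\ref{thm:xiGG0}, not of Theorem~\ref{matching-gp-Galois-sides}.
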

\begin{proof}
This follows from combining Theorems \ref{matching-gp-Galois-sides} and \ref{thm:xiGG0}. 
\end{proof}

\section{Applications to \texorpdfstring{$\rG_2$}{\rG_2}} \label{sec:G2}
In this section, we introduce notations and background specifically for the $\rG_2$ case.
\subsubsection{\textbf{General background}}
\begin{numberedparagraph}
Let $\fa_M$ be the real Lie algebra of $A_\bM$, and $\fa_M^*$ its dual. Let $\fa_{M,\C}^*$ be the complexification of $\fa_M^*$. Let $|\cdot|_F$ be the modulus of $F$. Let $H_M: M\to \fa_M$ be such that $q^{-\langle H_M(m),\alpha\rangle}=|\alpha(m)|_F$ for every rational character $\alpha$ of $M$ and every $m\in M$. Note that the kernel of $H_M$ is equal to $M_1$ (recall from $\mathsection$\ref{defining-M1}). 
Consider
\begin{equation} \label{eqn:Msigma}
M_\sigma:=\bigcap_{\chi\in \fX_\nr(M,\sigma)}\ker \chi,
\end{equation}
which has finite index in $M_1$. Recall $\fX_\nr(M,\sigma)$ from \eqref{eqn:XnrMsigma}, and we have  
\begin{equation}
\Irr(M_\sigma/M_1)\simeq \fX_\nr(M)/\fX_\nr(M,\sigma)
\quad\text{and}\quad
\C[M_\sigma/M_1]\simeq \C[\fX_\nr(M)/\fX_\nr(M,\sigma)].
\end{equation}
Set $(M_\sigma/M_1)^\vee:=\Hom_\Z(M_\sigma/M_1,\Z)$. Composing $H_M$ with the $\R$-linear extension of $H_M(M_\sigma/M_1)\to \Z$ gives an embedding 
\begin{equation}
H_M^\vee\colon (M_\sigma/M_1)^\vee\to \fa_M^*.
\end{equation}
For $m\in M$, let $b_m$ be the element of $\C[\fX_\nr(M)]$ defined by $b_m(\chi):=\chi(m)$ for any $\chi\in\fX_\nr(M)$.
Let $h_\alpha$ be the unique generator of $M_\sigma/M_1$ such that $\val_F(\alpha(h_\alpha))>0$. We define $X_\alpha\in \C(\fX_\nr(M)/\fX_{\nr}(M,\sigma)$ by
\begin{equation} \label{eqn:Xalpha}
    X_{\alpha}(\chi):=\chi(h_{\alpha}),
\end{equation}
for $\chi\in \fX_{\mathrm{nr}}(M)/\fX_{\mathrm{nr}}(M,\sigma)$. 
\end{numberedparagraph}

\begin{numberedparagraph}
For a complex number $s$, let $\chi_{s}$ be the character defined by
\begin{equation} \label{eqn:chi_s}
   \chi_s(m):= |\det(m)|_F^s \quad \text{for any $m\in M$.}
\end{equation}
In particular, $\chi_{s}\in \fX_\nr(M)$, and we have 
\begin{equation} \label{eqn:Xalphaevaluated}
 X_\alpha(\chi_s)=|\det(h_\alpha)|_F^s.   
\end{equation}
Let $\tilde\alpha$ be the element of $\fa^*_M$ defined by $\tilde\alpha:=\langle\rho_P,\alpha^\vee\rangle ^{-1}\rho_P$, where $\rho_P$ is half the sum of the roots of $A_M$ in $\mathrm{Lie}\,N$, with $P=MN$.
Then $s\tilde\alpha\in\fa_M^*\otimes_\R\C$.

We recall the description of the Plancherel measure from \cite{Silberger-79} (see also \cite{Solleveld-Hecke-algebra-2} or \cite{Heiermann-intertwining-operators-Hecke-algebras} for the notations used here): for $\alpha\in\Sigma_{\Oo,\mu}$, where $\Sigma_{\Oo,\mu}$ is the root system defined in \eqref{eqn:Root System}, there exist $q_{\alpha},q_{\alpha^*}\in \R_{\geq 1}, c'_{s_{\alpha}}\in\R_{>0}$ for $\alpha\in\Sigma_{\Oo,\mu}$, such that 
\begin{equation}\label{Silberger-Plancherel-formula}
    \mu^{M_\alpha}(\sigma\otimes\cdot)=c_{s_{\alpha}}'\,\frac{(1-X_{\alpha})(1-X_{\alpha}^{-1})}{(1-q_{\alpha}^{-1}X_{\alpha})(1-q_{\alpha}^{-1}X_{\alpha}^{-1})}\cdot\frac{(1+X_{\alpha})(1+X_{\alpha}^{-1})}{(1+q_{\alpha^*}^{-1}X_{\alpha})(1+q_{\alpha^*}^{-1}X_{\alpha}^{-1})}.
\end{equation}
\end{numberedparagraph}

\begin{numberedparagraph}
For $\alpha\in\Sigma_{\Oo,\mu}$, by \cite[Proposition~3.1]{Solleveld-endomorphism-algebra} there is a unique $\alpha^\sharp\in (M_\sigma/M_1)^\vee$ such that $H_M^\vee(\alpha^\sharp)\in\R\alpha$ and 
$\langle h_\alpha,\alpha^\sharp\rangle=2$. We set
\[\Sigma_\Oo:=\left\{\alpha^\sharp\,:\,\alpha\in\Sigma_{\Oo,\mu}\right\}
\quad\text{and}\quad
\Sigma_\Oo^\vee :=\left\{\alpha^\sharp\,:\,h_\alpha\in\Sigma_{\Oo,\mu}\right\}.\] 
The quadruple $(\Sigma_{\Oo}^{\vee},M_{\sigma}/M_1,\Sigma_{\Oo}, (M_{\sigma}/M_1)^{\vee})$ is a root datum with Weyl group $W_\Oo$. It has a natural action of the group $W(M,\Oo)$, and $R(\Oo)$ is the stabilizer
of its basis determined by $P$ (see \textit{loc.cit.}).
We endow this based root datum with the parameter $q_F$ and the labels
\begin{equation} \label{eqn:labels}
\lambda(\alpha):=\log(q_\alpha q_{\alpha^*})/\log(q_F)\quad\text{and}\quad
\lambda^*(\alpha):=\log(q_\alpha q_{\alpha^*}^{-1})/\log(q_F).
\end{equation}
To the above data we associate the affine Hecke algebra 
\begin{equation} \label{eqn:affineHA}
\mcH_\aff^\fs(\lambda,\lambda^*,q_F):=    \mcH_{\mathrm{aff}}(\Sigma_{\Oo}^{\vee},M_{\sigma}/M_1,\Sigma_{\Oo}, (M_{\sigma}/M_1)^{\vee}, \lambda,\lambda^*, q_F).
\end{equation}
It is defined as the vector space $\C[W_\Oo]\otimes_\C\C[M_{\sigma}/M_1]$
with the following multiplication rules:
\begin{itemize}
    \item $\C[W_\Oo]=\text{span}\{T_w: w\in W_\Oo\}$ is embedded as $\mcH(W_\Oo,q_F^\lambda)$, the \textit{Iwahori-Hecke algebra of $W_\Oo$}, i.e.
    \begin{equation} \label{eqn:IWH}
    \begin{matrix}
    T_wT_v=T_{wv}\quad\text{if $\ell(w)+\ell(v)=\ell(ww)$},\cr
    (T_{s_\alpha}+1)(T_{s_\alpha}-q_F^{\lambda(\alpha)})=0\quad\text{if $\alpha\in\Delta_{\Oo,\mu}$,}
    \end{matrix}
    \end{equation}
    where $\ell(w)$ is the word length of $w$\footnote{i.e., $\ell(w)$ is the smallest integer $\ell\ge 0$ such that $w$ is a product of $\ell$ generators $s_\alpha$.};
    \item $\C[M_{\sigma}/M_1]\simeq\C[\Oo]$ is embedded as a subalgebra;
    \item for $\alpha\in\Delta_{\Oo,\mu}$ and $x\in M_{\sigma}/M_1$ (corresponding to $\theta_x\in \C[M_{\sigma}/M_1]$):
    \[\theta_xT_{s_\alpha}-T_{s_\alpha}\theta_{s_\alpha(x)}=
    \left(q^{\lambda(\alpha)}_F-1+X_\alpha^{-1}(q^{\frac{\lambda(\alpha)+\lambda^*(\alpha)}{2}}-q^{\frac{\lambda(\alpha)-\lambda^*(\alpha)}{2}})\right)\frac{\theta_x-\theta_{s_\alpha(x)}}{1-X_\alpha^{-2}}. \]
\end{itemize}
\end{numberedparagraph}

\begin{numberedparagraph}
We set \begin{equation} \label{eqn:affine W}
W_\aff^\fs:=W_\Oo\ltimes\Z\Sigma_{\Oo}^{\vee}.
\end{equation}

From now on we assume that the parabolic subgroup $P$ is maximal. Then we have $M_\alpha=G$, and $W(M)$ is either trivial or of order $2$. 

\begin{remark} \label{WMO-at-most-order-2}
\begin{enumerate}
\item 
The groups $W(M,\Oo)$, $W_\Oo$, and $R(\Oo)$ are either trivial or of order $2$. 
In particular,  $\Sigma_{\Oo,\mu}$ is either empty or $\{\alpha,-\alpha\}$.
\item
For $G=\rG_2$, if $\sigma\not\simeq\sigma^{\vee}$, then $W(M,\Oo)=1$. It suffices to only check the case where $\sigma\simeq\sigma^{\vee}$.

In general, if $W(M,\Oo)=1$, then the parabolically induced representation is irreducible, so we do not need to work with the case. In the case of $\rG_2$, the condition $W(M,\Oo)\ne 1$ happens to be characterized by the condition that $\sigma$ is self-dual. See \cite{Shahidi-third-symmetric-power} for more details.
\end{enumerate}
\end{remark}
\end{numberedparagraph}

\begin{numberedparagraph}
If $\Sigma_{\Oo,\mu}\neq\emptyset$, then $W(M)\ne\{1\}$ and the group $W_\Oo$ is generated by the unique non-trivial element of $W(M)$, say $s_M$. Then we have $W_\Oo=W(M,\Oo)=W(M)$. In particular, if $\Sigma_{\Oo,\mu}\neq\emptyset$, we have $R(\Oo)=\{1\}$. 

The condition $\Sigma_{\Oo,\mu}=\emptyset$ is equivalent to the following
\begin{equation} \label{eqn:mu_nonzeo}
\mu^G(\chi\otimes\sigma)\ne 0\quad \text{ for any $\chi\in X_{\nr}(M)$}.
\end{equation}

We recall the following Harish-Chandra theorem. 
\begin{theorem}
\label{Harish-Chandra} {\rm (Harish-Chandra)}
\cite[5.4.2.2 and 5.4.2.3]{Silberger-79}  Let $M$ be a Levi subgroup of a maximal parabolic subgroup of $G$ and let $\sigma$ be a supercuspidal irreducible representation of $M$.\\
{\rm (a)} If $\mu^G(\sigma)=0$, then $W(M)=\{1,s_M\}\ne\{1\}$,  
and $s_M\sigma\simeq \sigma$.\\
{\rm (b)} Suppose $W(M)\ne\{1\}$. Then $\mu^G(\sigma)\neq 0$ if and only if the representation $i_{P}^{G}(\sigma)$ is reducible. In this case, the representation $i_{P}^{G}(\sigma)$ is the direct sum of two non-isomorphic irreducible representations.
\end{theorem}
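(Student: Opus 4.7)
The plan is to follow Harish-Chandra's classical proof (cf.\ Silberger \cite[Thms.~5.4.2.2 \& 5.4.2.3]{Silberger-79}), combining the analytic theory of standard intertwining operators with a Jacquet-module computation of the endomorphism algebra. The central input is Harish-Chandra's identity: for the opposite parabolic $\bar P = M\bar N$, the unnormalized intertwining operator $J_{\bar P|P}(\sigma_\chi)\colon i_P^G(\sigma_\chi)\to i_{\bar P}^G(\sigma_\chi)$ with $\sigma_\chi=\chi\otimes\sigma$ depends meromorphically on $\chi\in\fX_\nr(M)$ and satisfies
\[J_{P|\bar P}(\sigma_\chi)\circ J_{\bar P|P}(\sigma_\chi)=c\cdot\mu^G(\sigma_\chi)^{-1}\cdot\Id\]
for a positive constant $c$ (cf.\ \cite[\S V.2]{Wal}), so zeros of $\mu^G$ on the orbit correspond to poles of the intertwining operator.

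For part (a), I would argue that if $\mu^G(\sigma)=0$ then the identity above forces a pole of $J_{\bar P|P}$ at $\sigma$. For a supercuspidal $\sigma$, such a pole can arise only from a nontrivial self-intertwining, which by Schur's lemma applied to the irreducible $\sigma$ yields a nontrivial $w\in W(M)$ with $w\sigma\simeq\sigma$. Since $P$ is maximal, $|W(M)|\le 2$, so $W(M)=\{1,s_M\}$ and $s_M\sigma\simeq\sigma$.

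For part (b), assume $W(M)\ne\{1\}$; since the reducibility of $i_P^G(\sigma)$ in the supercuspidal setting requires a nontrivial self-intertwining, I reduce to the subcase $s_M\sigma\simeq\sigma$. By Bernstein's second adjunction and the geometric lemma,
\[\End_G(i_P^G(\sigma))\;\cong\;\Hom_M\bigl(r_M^G\,i_P^G(\sigma),\sigma\bigr)\;\cong\;\C\cdot\Id\;\oplus\;\C\cdot A(\sigma),\]
where $A(\sigma)$ is the self-intertwining built from $J_{\bar P|P}(\sigma)$ composed with the canonical $M$-equivariant identification $i_{\bar P}^G(\sigma)\simeq i_P^G(s_M\sigma)=i_P^G(\sigma)$. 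A proper normalization of $A(\sigma)$ yields
\[A(\sigma)^2=c'\cdot\mu^G(\sigma)\cdot\Id,\qquad c'\ne 0.\]
If $\mu^G(\sigma)\ne 0$, then $A(\sigma)$ has distinct eigenvalues $\pm\sqrt{c'\mu^G(\sigma)}$, hence $\End_G(i_P^G(\sigma))\simeq\C\times\C$ and $i_P^G(\sigma)$ decomposes into two eigenspaces, necessarily non-isomorphic because they are distinguished by the action of $A(\sigma)$. If $\mu^G(\sigma)=0$, then $A(\sigma)^2=0$, so $\End_G(i_P^G(\sigma))$ is local (either $\C$ or $\C[A]/(A^2)$), admits no nontrivial idempotent, and hence $i_P^G(\sigma)$ cannot split as a direct sum; the length bound coming from the Jacquet-module filtration then forces irreducibility.

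The main obstacle will be establishing the identity $A(\sigma)^2=c'\mu^G(\sigma)\Id$ with a nonzero $c'$ at the fixed point $s_M\sigma\simeq\sigma$: this requires a careful choice of normalizing factors (Harish-Chandra's $c$-functions or $\gamma$-factors) so that all extraneous poles and zeros cancel and the square reduces cleanly to $\mu^G$.
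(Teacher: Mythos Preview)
The paper does not give its own proof of this theorem; it is stated as a classical result of Harish-Chandra and simply cited to \cite[5.4.2.2, 5.4.2.3]{Silberger-79}. Your sketch is a reasonable reconstruction of the standard argument via intertwining operators and the geometric lemma, and there is nothing in the paper to compare it against.

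That said, there is one genuine gap in your treatment of the case $\mu^G(\sigma)=0$ in part~(b). From $A(\sigma)^2=0$ you correctly deduce that $\End_G(i_P^G(\sigma))$ is local and hence that $i_P^G(\sigma)$ is \emph{indecomposable}. But ``the length bound coming from the Jacquet-module filtration then forces irreducibility'' does not follow: the Jacquet module only gives length $\le 2$, and an indecomposable representation of length~$2$ (a nonsplit extension) is perfectly consistent with a $2$-dimensional local endomorphism algebra $\C[A]/(A^2)$. What you are missing is that in Harish-Chandra's setting $\sigma$ is \emph{unitary} (square-integrable modulo center), so $i_P^G(\sigma)$ is unitarizable and therefore semisimple. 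Semisimple plus indecomposable gives irreducible. Equivalently: a unitarizable representation cannot have a nonzero nilpotent in its endomorphism algebra, so in fact the normalized $A(\sigma)$ must be a scalar and $\End_G(i_P^G(\sigma))=\C$. Once you insert this unitarity step, the argument is complete.
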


\begin{Coro} \label{cor:HC}
Suppose $W(M)\ne\{1\}$. Then $W_\Oo=\{1\}$ if and only if, for any $\chi\in X_{\nr}(M)$, the representation $i_{P}^{G}(\chi\otimes\sigma)$ is reducible.
\end{Coro}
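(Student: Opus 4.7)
The plan is to chain together three equivalences: (i) the description of $W_{\Oo}$ as the Weyl group of $\Sigma_{\Oo,\mu}$, (ii) the simplified form of $\Sigma_{\Oo,\mu}$ when $P$ is maximal, and (iii) Harish-Chandra's Theorem~\ref{Harish-Chandra}(b).

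First I would note that, since $P$ is maximal, for every $\gamma\in\Sigma_\red(A_M)$ we have $M_\gamma=G$. Hence by the definition \eqref{eqn:Root System},
\[
\Sigma_{\Oo,\mu}=\bigl\{\gamma\in\Sigma_\red(A_M)\,:\,\mu^{G}\text{ has a zero on }\Oo\bigr\}.
\]
By Remark~\ref{WMO-at-most-order-2}(1), this set is either empty or equal to $\{\alpha,-\alpha\}$; consequently its Weyl group $W_{\Oo}$ is trivial precisely when $\Sigma_{\Oo,\mu}=\emptyset$. Since $\Oo=\fX_\nr(M)\cdot\sigma$, the emptiness of $\Sigma_{\Oo,\mu}$ is in turn equivalent to
\[
\mu^{G}(\chi\otimes\sigma)\neq 0\quad\text{for every }\chi\in\fX_\nr(M),
\]
which is precisely the condition \eqref{eqn:mu_nonzeo}.

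Next I would apply Theorem~\ref{Harish-Chandra}(b) pointwise on the orbit $\Oo$. For each $\chi\in\fX_\nr(M)$, the representation $\chi\otimes\sigma$ is again supercuspidal irreducible on $M$, and the hypothesis $W(M)\ne\{1\}$ of the corollary is an intrinsic property of the pair $(G,M)$, hence holds for every twist $\chi\otimes\sigma$. Therefore Theorem~\ref{Harish-Chandra}(b) yields
\[
\mu^{G}(\chi\otimes\sigma)\neq 0\iff i_{P}^{G}(\chi\otimes\sigma)\text{ is reducible}.
\]
Combining this with the previous equivalences gives
\[
W_{\Oo}=\{1\}\iff\Sigma_{\Oo,\mu}=\emptyset\iff i_{P}^{G}(\chi\otimes\sigma)\text{ is reducible for every }\chi\in\fX_\nr(M),
\]
which is the desired statement.

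There is no real obstacle here; the corollary is essentially a repackaging of Harish-Chandra's criterion in the language of the root system $\Sigma_{\Oo,\mu}$. The only point requiring a moment's care is ensuring that the hypothesis $W(M)\ne\{1\}$ is used correctly when invoking Theorem~\ref{Harish-Chandra}(b) for every twist simultaneously, and that the maximality of $P$ is what permits the identification $\mu^{M_\gamma}=\mu^{G}$ inside the definition of $\Sigma_{\Oo,\mu}$.
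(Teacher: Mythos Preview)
Your proof is correct and follows exactly the approach the paper sets up: the paper states the corollary without proof immediately after assembling precisely the ingredients you use---the identification $M_\gamma=G$ for maximal $P$, Remark~\ref{WMO-at-most-order-2}(1), the equivalence \eqref{eqn:mu_nonzeo}, and Theorem~\ref{Harish-Chandra}(b). You have simply made explicit the chain of implications the paper leaves to the reader.
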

\end{numberedparagraph}

\subsubsection{\textbf{Some background on $\rG_2$}}
In the case where $G$ is the split $\rG_2$, we obtain more precise results than in previous sections.  Let $T$ be a maximal split torus in $G$. Let $R$ be the set of roots of $G$ with respect to $T$. Let
$(\epsilon_1,\epsilon_2,\epsilon_3)$ be the canonical
basis of $\R^3$, equipped with the scalar product $(\;|\;)$ for which this basis is orthonormal. Then
$\left\{\alpha:=\varepsilon_1-\varepsilon_2,
\beta:=-2\varepsilon_1+\varepsilon_2+\varepsilon_3\right\}$ defines a basis of $R$, and
\begin{equation}
R^+=\left\{\alpha,\beta,\alpha+\beta,2\alpha+\beta,3\alpha+\beta,
3\alpha+2\beta\right\}
\end{equation} 
is a subset of positive roots in $R$.
We have
\begin{equation} \label{products}
(\alpha|\alpha)=2,\;\;\; (\beta|\beta)=6\;\; \text{ and
}\;\;(\alpha|\beta)=-3.
\end{equation}
Hence $\alpha$ is a short root, while $\beta$ is a long root.
\begin{numberedparagraph}
As in \cite{Muic-unitary-dual-G2}, we fix an isomorphism:
\begin{align}\label{equ:etaa}
\eta_\alpha\colon T&\xlongrightarrow{\sim} F^{\times} \times F^{\times}\\
 t &\longmapsto ( (2\alpha+\beta)(t), (\alpha+\beta)(t) ).
\end{align}
Under this identification we have
\begin{equation} \label{eqn:eval}
\alpha^\vee(a)=\eta_\alpha^{-1}(a,a^{-1})\quad\text{and}\quad
\beta^\vee(a)=\eta_\alpha^{-1}(1,a)\quad\text{for any $a\in F^\times$.}
\end{equation}
Let $G^\vee$ be the dual group of $G$ over $\C$, obtained via an identification of the roots of $G^\vee$ with the coroots of $\rG_2$ and vice versa. Then $G^\vee$ is a complex reductive group of type $\rG_2$, with simple roots $\alpha^\vee$ and $\beta^\vee$. Note  that $\alpha^\vee$ (resp.~$\beta^\vee$) is the long (resp.~short) root of $G^\vee$. Consider the torus $T^\vee$ dual to $T$. Then $T^\vee$ is a maximal torus of $G^\vee$. 
We fix an isomorphism:
\begin{align} \label{etabC}
\eta_{\beta^\vee}\colon T&\xlongrightarrow{\sim}  \C^{\times} \times \C^{\times}\\
 t &\longmapsto ((\alpha^\vee+2\beta^\vee)(t), (\alpha^\vee+\beta^\vee)(t)).
\end{align}
We have 
\begin{equation} \label{eqn:evaldual}
\alpha^\vee(a)=\eta_{\beta^\vee}^{-1}(1,a)\quad\text{and}\quad
\beta^\vee(a)=\eta_{\beta^\vee}^{-1}(a,a^{-1})\quad\text{for any $a\in F^\times$.}
\end{equation}
\end{numberedparagraph}

\begin{numberedparagraph}
For each root $\gamma\in R(G)$, we fix root group homomorphisms $x_\gamma\colon F\to G$ and
$\Z$-homomorphisms $\zeta_\gamma\colon\SL_2(F)\to G$ 
as in \cite[(6.1.3)~(b)]{Bruhat-Tits-I}.
We have
\begin{equation}
x_\gamma(u)=\zeta_\gamma\left(\begin{matrix}
1&u\cr
0&1\end{matrix}\right),\;\;\;\;
x_{-\gamma}(u)=\zeta_{-\gamma}\left(\begin{matrix}
1&0\cr
u&1\end{matrix}\right)
\;\;\text{
and }\;\;
\gamma^\vee(t)=\zeta_\gamma\left(\begin{matrix}
t&0\cr
0&t^{-1}\end{matrix}\right).
\end{equation}
For $\gamma\in\{\alpha,\beta\}$, let $P_\gamma$ be the maximal standard parabolic subgroup of $G$ generated by $\gamma$. Let
$M_\gamma$ be the centralizer of the image of $(\gamma')^\vee$ in $G$, where $\gamma'$ is
the unique positive root orthogonal to $\gamma$, i.e.
\begin{equation}
\gamma':=\begin{cases}
3\alpha+\beta&\text{ if $\gamma=\alpha$,}\cr
3\alpha+2\beta&\text{ if $\gamma=\beta$.}
\end{cases}
\end{equation}
Then $M_\gamma$ is a Levi factor for $P_\gamma$. Moreover, $M_\alpha$ and $M_\beta$ are representatives of the two conjugacy classes of maximal Levi subgroups of $G$.

\noindent We extend $\zeta_\gamma\colon\SL_2(F)\to M_\gamma$ to an isomorphism
$\zeta_\gamma\colon\GL_2(F)\to M_\gamma$ by
\begin{equation}
\zeta_\gamma\left(\left(\begin{matrix}t&0\cr 0&1\end{matrix}\right)
\right):=\zeta_{\gamma'}\left(\left(\begin{matrix}t&0\cr
0&t^{-1}\end{matrix}\right)\right),
\;\;\;\;\text{for $t\in\ F^\times$.}
\end{equation}
Then the restriction of $\zeta_\gamma^{-1}$ to $T$ coincides
with the isomorphism
\begin{equation}
\eta_\gamma\colon T\to F^\times\times F^\times,
\end{equation}
where $\eta_\alpha$ has been defined in~(\ref{equ:etaa}), and
\begin{equation}
\eta_\beta\colon t\mapsto((\alpha+\beta)(t),\alpha(t)).
\end{equation}
\end{numberedparagraph}

\subsection{Explicit Hecke algebra parameters
}
\subsubsection{\textbf{The long root case}}
Let $\psi$ be a fixed nontrivial additive character of $F$, and $\overline{\psi}$ be the dual of $\psi$.
Assume for now the Levi factor $M$ of $P=MN$ is generated by the long root of $G$.  Let $\sigma$ be an irreducible unitary supercuspidal representation of $M$. We denote by $\omega:=\omega_\sigma$ the central character of $\sigma$. 
Let $L/F$ be a quadratic extension. Let $\chi$ be a character of $L^\times$. Let $\chi'$ be the conjugate of $\chi$, i.e. $\chi'(a)=\chi(\overline{a})$. Let $\Pi(\sigma)$ denote the Gelbart-Jacquet lift of $\sigma$ as defined in \cite{Gelbart-Jacquet-lift}. Our notations follow 
\cite{Shahidi-third-symmetric-power}. 
The Plancherel measure $\mu(s\widetilde{\alpha},\sigma)$ has the following four possibilities (\cite{AEFKY}). 

\begin{numberedparagraph}\label{subsub: long root I}\textbf{Case I.}~If $\omega$ is unramified, and if  $\sigma=\sigma(\tau)$ with
$\tau=\Ind_{W_L}^{W_F} \chi$,
with $\chi^2\chi'$ unramified, then
\begin{align}\label{long-root-case-I-Plancherel}
\mu(s\widetilde{\alpha},\sigma)=\gamma(G/P)^2q_F^{n(\omega) +n(\sigma \times \Pi(\sigma))-n(\sigma)}&\dfrac{(1-\omega(\varpi)q_F^{-2s})(1-\omega^{-1}(\varpi)q_F^{2s})}{(1-\omega^{-1}(\varpi)q_F^{-1+2s})(1-\omega(\varpi)q_F^{-1-2s})} \\
&\cdot \dfrac{(1-\chi^2\chi'^{-1}(\varpi_L)q_L^{-s})(1-\chi^{-2}\chi'(\varpi_L)q_L^s)}{(1-\chi^2\chi'^{-1}(\varpi_L)q_L^{-1-s})(1-\chi^{-2}\chi'(\varpi_L)q_L^{-1+s})}
\end{align}
Comparing \eqref{long-root-case-I-Plancherel} with the Plancherel formula in  \eqref{Silberger-Plancherel-formula}, we have
\begin{equation}
\begin{cases}
    X_{\alpha}(s)&=\omega(\varpi_F)q_F^{-2s}\\
    X_{\alpha}(s)&=-\chi^2\chi'^{-1}(\varpi_L)q_L^{-s},
\end{cases}
\end{equation}
which implies that 
\begin{equation}\label{Plancherel-equation-long-root-case-I}
\omega(\varpi_F)q_F^{-2s}+\chi^2\chi'^{-1}(\varpi_L)q_L^{-s}=0.
\end{equation}
Since $q_L=q^{f(L/F)}$, \eqref{Plancherel-equation-long-root-case-I} only has a solution when $f(L/F)=2$ and 
\begin{equation}
\omega(\varpi_F)+\chi^2\chi'^{-1}(\varpi_L)=0,
\end{equation}
which is satisfied in our case. In particular, we have
\begin{equation}\label{q-alpha-long-root-case1}
q_{\alpha}=q_F, \quad q_{\alpha^*}=q_L=q_F^{f(L/F)}.
\end{equation}
Therefore we have 
\begin{equation}
    \begin{matrix}\lambda(\alpha)=\log(q_{\alpha}q_{\alpha^*})/\log(q_F)=1+f(L/F),\\
    \lambda^*(\alpha)=|\log(q_{\alpha}q_{\alpha^*}^{-1})/\log(q_F)|=|1-f(L/F)|.\end{matrix}
\end{equation}
Hence the parameters for the extended affine Hecke algebra  
in this case are $q_F^{1+f(L/F)}$ and $q_F^{|1-f(L/F)|}$.
\end{numberedparagraph}

\begin{numberedparagraph}\label{subsub: long root II}\textbf{Case II.}~If $\omega_\sigma$ is ramified and $\sigma=\sigma(\tau)$
with $\tau=\Ind_{W_L}^{W_F} \chi$,  and $\chi^2\chi'$  unramified,
\begin{equation}\label{long-root-case-two}
\mu(s\widetilde{\alpha},\sigma)=\gamma(G/P)^2q_F^{n(\sigma \times \Pi(\sigma))-n(\sigma)}\dfrac{(1-\chi^2\chi'^{-1}(\varpi_L)q_L^{-s})(1-\chi^{-2}\chi'(\varpi_L)q_L^s)}{(1-\chi^2\chi'^{-1}(\varpi_L)q_L^{-1-s})(1-\chi^{-2}\chi'(\varpi_L)q_L^{-1+s})}
\end{equation}
We compare (\ref{long-root-case-two}) to the Plancherel formula (\ref{Silberger-Plancherel-formula}) and obtain
\begin{equation}\label{q-alpha-long-root-case2}
q_{\alpha^*}=1, \quad q_{\alpha}=q_L=q_F^{f(L/F)}
\end{equation}
Recall the definition of $X_{\alpha}$ as 
\begin{equation}
    X_{\alpha}(\chi):=\chi(h_{\alpha}^{\vee})
\end{equation}
where $\chi\in \mathfrak{X}^{\mathrm{nr}}(M)/\mathfrak{X}^{\mathrm{nr}}(M,\sigma)$. Since the map $\psi_s\colon m\mapsto |\det(m)|_F^s$ is an unramified character of $M$, we have 
\begin{equation}
X_{\alpha}(\psi_s)=(\chi^2\chi'^{-1})(\varpi_L)q_L^{-s}
\end{equation}
Recall from~(\ref{eqn:labels}) that
\begin{equation}\label{lambda-alpha}
    q_F^{\lambda(\alpha)}=q_{\alpha}q_{\alpha^*}\in\R_{>1}.
\end{equation}
Thus by \eqref{q-alpha-long-root-case2}, we have $q_F^{\lambda(\alpha)}=q_L=q_F^{f(L/F)}$, where $f(L/F)$ is the residue degree and is thus $1$ if $L/F$ is ramified, and $2$ if $L/F$ is unramified. In particular
\begin{equation}\label{group-side-lambda-1}
    \lambda(\alpha)=f(L/F), \quad\lambda^*(\alpha)=f(L/F).
\end{equation}
Note that for $w\in W(M,\Oo)$, one may check that
\begin{equation}\label{w-X-alpha=X-w-alpha-2}
    w(X_{\alpha})=X_{w(\alpha)}
\end{equation}
Since $w(\alpha)=\alpha$ for $w\in W(M,\Oo)$ when $G=\rG_2$, (\ref{w-X-alpha=X-w-alpha-2}) is simply $w(X_{\alpha})=X_{\alpha}$. On the other hand, by \cite[Prop 1.1]{Solleveld-Hecke-algebra-2} we have
\[wX_{\alpha}(\chi)=w(X_{\alpha}(\chi))=w(\chi(h_{\alpha}^{\vee}))=\chi(w(h_{\alpha}^{\vee}))=\chi(h_{w(\alpha)}^{\vee})=\chi(h_{\alpha}^{\vee})=X_{\alpha}(\chi)\]
Thus $wX_{\alpha}=X_{\alpha}=X_{w(\alpha)}$. 
This reduces to check, in the long root case, that 
\begin{equation}
    s_{2\alpha+\beta}(\chi^2\chi'^{-1}(\varpi_L)q_L^{-s})=\chi^2\chi'^{-1}(\varpi_L)q_L^{-s}
\end{equation}
Since $\Sigma_{\Oo}^{\vee}=\{1, 2\alpha+\beta\}$ in the long root $M=M^{\beta}$ case, we have
\begin{equation}\label{Ws-Galois-side}
    W(\Sigma_{\Oo}^{\vee})=\{1,s_{2\alpha+\beta}\}.
\end{equation}
Thus the Iwahori-Hecke algebra of $W(\Sigma_{\Oo}^{\vee})$, as defined in \eqref{eqn:IWH}, is given by
\begin{equation}
    \mathcal{H}(W(\Sigma_{\Oo}^{\vee}),q_F^{\lambda})=\begin{cases}\mathcal{H}(\{1,s_{2\alpha+\beta}\},q_F),&L/F\text{ is ramified}\\
    \mathcal{H}(\{1,s_{2\alpha+\beta}\},q_F^2),&L/F\text{ is unramified}
    \end{cases}
\end{equation}
Therefore, the affine Hecke algebra in this case is given by
\begin{equation}
    \mathcal{H}_{\mathrm{aff}}(M^{\beta})= \mathcal{H}(\{1,s_{2\alpha+\beta}\},q_F^{f(L/F)})\ltimes\C[\Oo]
\end{equation}
\end{numberedparagraph}

\begin{numberedparagraph}\label{subsub: long root III}\textbf{Case III.}~If $\omega$ is unramified and 
$\sigma \neq \sigma(\tau)$ or 
$\chi^2\chi'$ is ramified, 
\begin{equation}\label{long-root-case-three}
\mu(s\widetilde{\alpha},\sigma)=\gamma(G/P)^2q_F^{n(\omega) +n(\sigma \times \Pi(\sigma))-n(\sigma)}\dfrac{(1-\omega(\varpi)q_F^{-2s})(1-\omega^{-1}(\varpi)q_F^{2s})}{(1-\omega^{-1}(\varpi)q_F^{-1+2s})(1-\omega(\varpi)q_F^{-1-2s})}
\end{equation}
In this case, we have 
\begin{equation}
\begin{matrix}
    X_{\alpha}(s)=\omega(\varpi)q_F^{-2s}\\
    q_{\alpha^*}=1, \quad q_{\alpha}=q_F
    \end{matrix}
\end{equation}
Thus $\lambda(\alpha)=1$ and $\lambda^*(\alpha)=1$. The parameters in this case are simply $q_F$. 
\end{numberedparagraph}

\begin{numberedparagraph}\label{subsub: long root IV}\textbf{Case IV.}~If $\omega$ ramified and 
$\sigma \neq \sigma(\tau)$ or 
$\chi^2\chi'$ is ramified,
\begin{equation}\label{long-root-case-four}
\mu(s\widetilde{\alpha},\sigma)=\gamma(G/P)^2q_F^{n(\sigma \times \Pi(\sigma)))-n(\sigma)}
\end{equation}
In this case, we have 
\begin{equation}
    q_{\alpha}=1,\quad q_{\alpha^*}=1
\end{equation}
Thus $\lambda(\alpha)=0$ and $\lambda^*(\alpha)=0$. Thus the parameters in this case are trivial.
\end{numberedparagraph}

\subsubsection{\textbf{The short root case}} \label{subsec:short root}
Now we give the explicit computation in the short root case. Assume the Levi factor $M$ of $P=MN$ is generated by the short root of split $\rG_2$. Let $\sigma$ be an irreducible unitary supercuspidal representation of $M$. Let $\omega=\omega_\sigma$ be its central character. Then by \cite[Proposition 6.2]{Shahidi-Plancherel-Langlands} the Plancherel measure $\mu(s\widetilde{\alpha},\sigma)$ is given by the formula 
\begin{equation}\label{short-root-formula}
\mu(s\widetilde{\alpha},\sigma)=\begin{cases}\gamma(G/P)^2q_F^{n(\sigma)+n(\sigma\otimes\omega)}\frac{(1-\omega(\varpi)q_F^{-2s})(1-\omega(\varpi)^{-1}q_F^{2s})}{(1-\omega(\varpi)q^{-1-2s})(1-\omega(\varpi)^{-1}q_F^{-1+2s})} &\text{if $\omega$ is unramified}\\
\gamma(G/P)^2q_F^{n(\sigma)+n(\omega)+n(\sigma\otimes\omega)}&\text{otherwise}\end{cases}
\end{equation}
Here $n(\sigma)$, $n(\omega)$ and $n(\sigma\otimes\omega)$ are the corresponding conductors.

\begin{numberedparagraph}\label{subsub: short root unramified} \textbf{Case I.}~
If $\omega$ is unramified, 
\begin{equation}\label{unramified-short-root}
    \mu(s\widetilde{\alpha},\sigma)=\gamma(G/P)^2q_F^{n(\sigma)+n(\sigma\otimes\omega)}\frac{(1-\omega(\varpi)q_F^{-2s})(1-\omega(\varpi)^{-1}q_F^{2s})}{(1-\omega(\varpi)q_F^{-1-2s})(1-\omega(\varpi)^{-1}q_F^{-1+2s})} 
\end{equation}
Comparing \eqref{unramified-short-root} with \eqref{Silberger-Plancherel-formula} implies that 
\begin{equation}\label{q-unramified-short-root}
    q_{\alpha}=q_F,\quad q_{\alpha^*}=1
\end{equation}
Since $\chi_s$ is an unramified character of $M$, we have 
\begin{equation}
    X_{\alpha}(\chi_s)=\omega(\varpi)q_F^{-2s}.
\end{equation}
Recall from~(\ref{eqn:labels}) that 
    $q_F^{\lambda(\alpha)}=q_{\alpha}q_{\alpha^*}\in\R_{>1}$. 
Thus by \eqref{q-unramified-short-root},
we have $q_F^{\lambda(\alpha)}=q_F$ and thus $\lambda(\alpha)=1$ and $\lambda^*(\alpha)=1$. 
Note that for $w\in W(M,\Oo)$, one may check that
\begin{equation}\label{w-X-alpha=X-w-alpha}
    w(X_{\alpha})=X_{w(\alpha)}.
\end{equation}
Since $w(\alpha)=\alpha$ for $w\in W(M,\Oo)$ for $G=\rG_2$, (\ref{w-X-alpha=X-w-alpha}) is simply $w(X_{\alpha})=X_{\alpha}$. On the other hand, by \cite[Prop 1.1]{Solleveld-Hecke-algebra-2} we have
\begin{equation}
wX_{\alpha}(\chi)=w(X_{\alpha}(\chi))=w(\chi(h_{\alpha}))=\chi(w(h_{\alpha}))=\chi(h_{w(\alpha)})=\chi(h_{\alpha})=X_{\alpha}(\chi).
\end{equation}
Thus $wX_{\alpha}=X_{\alpha}=X_{w(\alpha)}$. 
Since $\Sigma_{\Oo}^{\vee}=\{3\alpha+2\beta\}$ in the short root $M=M_\alpha$ case (see \cite[p.389]{Shahidi-Plancherel-Langlands}), we have
\begin{equation}\label{Ws-group-side}
    W(\Sigma_{\Oo}^{\vee})=\{1,s_{3\alpha+2\beta}\}
\end{equation}
Therefore we have the affine Hecke algebra
\begin{equation}
    \mathcal{H}_{\mathrm{aff}}(M_{\alpha})=\mathcal{H}(\{1,s_{3\alpha+2\beta}\},q_F)\ltimes\C[\Oo].
\end{equation}
\end{numberedparagraph}

\begin{numberedparagraph}
\label{subsub: short root ramified} 
\textbf{Case II.}~In the other case, 
\begin{equation}\label{short-root-ramified}
    \mu(s\widetilde{\alpha},\sigma)=\gamma(G/P)^2q^{n(\sigma)+n(\omega)+n(\sigma\otimes\omega)}
\end{equation}
Comparing \eqref{short-root-ramified} and \eqref{Silberger-Plancherel-formula} gives us
\begin{equation}
    q_{\alpha}=1,\quad q_{\alpha^*}=1.
\end{equation}
Therefore we have $q_F^{\lambda}=1$ and thus $\lambda(\alpha)=0$ and $\lambda^*(\alpha)=0$. Therefore the affine Hecke algebra in this case is given by
\begin{equation}
    \mathcal{H}_{\mathrm{aff}}(M_{\alpha})=\mathcal{H}(\{1,s_{3\alpha+2\beta}\},1)\ltimes\C[\Oo].
\end{equation}
\begin{remark}
The computations of Hecke algebras with explicit parameters in this section will be collected into tables in $\mathsection$\ref{subsec:IA}. 
\end{remark}
\end{numberedparagraph}

\subsection{Intertwining algebras} \label{subsec:Kim-Yu}
\begin{numberedparagraph}
For $b \in F^\times/F^{\times 2}$, let $\rU_b(1,1)$ be the quasi-split unitary group, and $\rU_b(2)$ the compact unitary group in two variables in $F (\sqrt{b})$. We write $F^\times/F^{\times 2}= \{1,\varepsilon,\varpi,\varepsilon\varpi\}$, and the possible unitary groups in $2$ variables are:
\[ \rU_{\varepsilon}(1,1),\ \rU_{\varepsilon}(2),\ \rU_{\varpi}(1,1),\  \rU_{\varpi}(2),\ \rU_{\varepsilon\varpi}(1,1),\ \rU_{\varepsilon\varpi}(2).\]
The group $\rU_{\epsilon}(1,1)$ is an unramified group. The group $\rU_{\varpi'}(1,1)$ is ramified, where $\varpi'\in\{\varpi,\varepsilon\varpi\}$.
\end{numberedparagraph}

\begin{numberedparagraph}
We now classify the twisted Levi sequences in $\rG_2$ (up to conjugacy) for $\bM=\bM_\gamma$ with $\gamma\in\{\alpha,\beta\}$:
\begin{enumerate}
    \item[(1)]
    Essentially depth zero case: 
    If $\rho_{\Sigma_M}$ is an essentially depth-zero supercuspidal type on $M$, then $\Sigma_M$ is of the form $(M,y,\phi,r,\rho_M)$ (hence in particular $M^0=M$), where $K_{\Sigma_M}=M_{y,0}\simeq\GL_2(\fo_F)$ is a maximal compact subgroup of $\GL_2(F)$ and $r=\depth(\rho_{\Sigma_M})$ is an integer. If $r=0$, we may assume that $\phi=1$ without loss of generality.
    \begin{itemize}
\item[(a)] $\vec \bG=(\bG)$ (here $\bG^0=\bG$, it is a depth zero case: $r=0$),
\item[(b)] $\vec \bG=(\bM^0,\bG)$ (here $\bG^0=\bM=\bM^0$ and $r\ne 0$).
    \end{itemize}
    \item[(2)]
    Positive depth cases \cite{AEFKY}: 
\begin{itemize}
\item[(a)] $\vec \bG=(\rU_{\varepsilon}(1,1),\bG)$, 
\item[(b)] $\vec \bG=(\rU_{\varpi'}(1,1),\bG)$, with $\varpi'\in\{\varpi,\varepsilon\varpi\}$,
\item[(c)] $\vec \bG=(\bM^0,\bG)$,
\item[(d)] $\vec \bG=(\bM^0,\bM,\bG)$,  
\end{itemize}
where $\bM^0$ is a torus in $\bG^0$.
\end{enumerate}
When $\bM=\bM_\gamma$, we have three possibilities for $\bM^0$, denoted $T_{\gamma,\varepsilon}$, $T_{\gamma,\varpi}$ and $T_{\gamma,\varpi}$.
If $\phi_0$ has trivial restriction to $\rZ_M^\circ$, then it can be extended to to a character of $\rU_b$ and we use the same notation $\phi_0$ to denote the extended character.

Let $\bbG^0_y$ denote the reductive quotient of $G^0_y$. Let $\varpi'\in\{\varpi,\varepsilon\varpi\}$. We have
\begin{equation}
\bbG^0_y=\begin{cases}
\rU(1,1)&\text{if $G^0=\rU_{\varepsilon}(1,1)$,}\cr
\SO_2&\text{if $G^0=\rU_{\varpi'}(1,1)$.}\end{cases} 
\end{equation}

\begin{remark} \label{rem:central characters}
The central character $\omega_{\sigma}$ of $\sigma$ can be either ramified or unramified. It is unramified if and only if $\omega_{\sigma^0}$ is trivial. When  $\omega_{\sigma}$ is ramified, $\omega_{\sigma^0}$ is quadratic.
\end{remark}

\begin{lem} \label{lem:Ad-Mi}
We have 
\begin{equation}
W_G^\fs\simeq W_{G^0}^{\fs^0}.
\end{equation}
\end{lem}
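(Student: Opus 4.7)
The plan is to derive this as a direct application of the Adler--Mishra result in Lemma~\ref{lem:GG0}(1), which under its hypotheses produces a canonical group isomorphism $\fw_\sigma\colon W_G^\fs \overset{\sim}{\to} W_{G^0}^{\fs^0}$. So the work reduces to verifying that those hypotheses hold in the present $\rG_2$ setting.

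First I would check the hypotheses on $\bG$ and $p$: since $\bG = \rG_2$ is semisimple and simply connected, its derived subgroup is itself and has trivial fundamental group, so the divisibility condition on $p$ is automatic; the condition that $p$ be good for $G$ (i.e.\ $p \neq 2, 3$) is part of the standing assumptions in $\mathsection$\ref{subsec:ns}. Second, I would verify the regularity of $\fs$: for $\bG = \rG_2$ the maximal Levi subgroups $M_\alpha$ and $M_\beta$ are both isomorphic to $\GL_2(F)$, so by \cite{Fintzen} every irreducible supercuspidal $\sigma$ of $M$ arises from the Kim--Yu construction, and under our hypotheses on $p$ the resulting inertial class $\fs = [M,\sigma]_G$ is regular in the sense of \cite{Kal-reg}. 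With these two inputs in hand, Lemma~\ref{lem:GG0}(1) applies verbatim and yields $W_G^\fs \simeq W_{G^0}^{\fs^0}$, the isomorphism $\fw_\sigma$ being automatically compatible with the variety isomorphism $\fff$ of \eqref{eqn:map_f}.

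The main point requiring attention is the bookkeeping across the classification of twisted Levi sequences ((1)(a)--(1)(b) and (2)(a)--(2)(d)) listed just above: one should confirm that the Adler--Mishra recipe does the expected thing in each case. In the essentially depth-zero cases the depth-zero representation $\sigma^0$ on $M^0 = M \simeq \GL_2(F)$ is manifestly regular, and in the positive-depth cases $\sigma^0$ is associated to a regular Deligne--Lusztig character of an elliptic torus in $\bbG^0_y$, so no additional obstruction arises. The case-by-case check is routine and provides a useful sanity verification, but imposes nothing beyond what is already subsumed by Lemma~\ref{lem:GG0}(1), which thus supplies the full conclusion.
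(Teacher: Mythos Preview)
Your proof is correct and follows essentially the same approach as the paper: verify that $p$ is good for $\rG_2$ (i.e.\ $p\neq 2,3$), that the fundamental group of $\bG_\der$ is trivial, and that $\sigma$ is regular, then invoke Lemma~\ref{lem:GG0}(1). The paper's proof is slightly terser (it simply asserts regularity of $\sigma$ as part of the standing setup rather than arguing it from the $\GL_2$ structure), but the logic is identical.
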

\begin{proof} The representation $\sigma$ is regular and $p$ is good for $G$, i.e.~$p\ne 2,3$ and does not divide the order ($=1$) of the fundamental group of $\bG_\der$. Hence Lemma~\ref{lem:GG0} applies and gives the desired isomorphism. 
\end{proof}
\end{numberedparagraph}

\subsubsection{\textbf{The intertwining algebras of types attached to $G^0$}} \label{subsec:IAG0}
\begin{numberedparagraph}\label{subsubsec:G0M0} 
\textbf{The case \texorpdfstring{$G^0=M^0$}{G^0=M^0}.}~It occurs in both the essentially depth-zero case with $r\ne 0$  and in the positive depth cases.
We have two possibilities for $M^0$: either $M^0\simeq \GL_2(F)$ or $M^0$ is a torus. In both cases, the algebra $\mathcal{H}(G^0,\rho_{\mcD^0})=\mcH(M^0,\rho_{\mcD^0})$ is commutative by \cite[5.5,5.6]{Bushnell-Kutzko}.
\end{numberedparagraph}

\begin{numberedparagraph}\label{subsub: Ue}\textbf{The case $G^0=\rU_{\varepsilon}(1,1)$.}~
If $W_{G^0}^{\fs^0}=\{1\}$, then $\mathcal{H}(G^0,\rho_{\mcD^0})$ is commutative, as seen in Remark~\ref{rem: trivial commutative case}. 
From now on we suppose that $W_{G^0}^{\fs^0}\ne\{1\}$.
Let $a\mapsto \overline{a}$ be the non-trivial element of $\Gal(L/F)$. Set
\begin{equation}
w^0:=\left(\begin{matrix}0&1\cr
1&0\end{matrix}\right),\quad
w_1:=\left(\begin{matrix}0&\overline\varpi_L^{-1}\cr
\varpi_L&0\end{matrix}\right),\quad\textbf{and}
\quad\fP:=\left(\begin{matrix} \fo_L^\times&\fo_L\cr\fp_L&\fo_L^\times\end{matrix}\right)\cap G^0.
\end{equation}
Recall that $\widetilde{\rho_{\mcD^0}}$ denotes the contragredient representation of $\rho^0$.
By \cite[\S3.1]{Badea}, the  Iwahori-Matsumoto presentation of $\mcH(\rU_{\varepsilon}(1,1),\rho_{\mcD^0})$ is given by: $\mcH(\rU_{\varepsilon}(1,1),\rho_{\mcD^0})$ is the space  spanned by functions 
\begin{equation}
T_{w_i}\colon G^0 \to \End_G(V_{\widetilde{\rho_{\mcD^0}}}), \quad\text{for $i\in\{0,1\}$,}
\end{equation}
satisfying
\begin{equation} \label{eqn:AHecke}
T_{w_i}(pgp')=\widetilde{\rho_{\mcD^0}}(p)T_{w_i}(g)\widetilde{\rho_{\mcD^0}}(p'),\quad \text{where $p,p'\in \fP$ and $g\in G^0$.}\end{equation}
Here $T_{w_i}$ is supported on $\fP w_i\fP$, and satisfies the quadratic relation 
\begin{equation} \label{eqn:quadUe}
(T_{w_i}-q_F)(T_{w_i}+1)=0.
\end{equation}
One can then deduce the Bernstein presentation of $\mcH(\rU_{\varepsilon}(1,1),\rho_{\mcD^0})$ using \cite[\S3]{Lusztig-affine-Hecke-algebras-and-graded}. In particular, we have $q_F^{\lambda(\alpha)}=q_F$.
\end{numberedparagraph}

\begin{numberedparagraph}\label{subsub: Upi}\textbf{The case $G^0=\rU_{\varpi'}(1,1)$.}~
Let $\varpi'\in\{\varpi,\varepsilon\varpi\}$. Since $\rU_{\varpi'}(1,1)$ is ramified, by \cite[\S5.1.1]{Badea}, the algebra $\mcH(\rU_{\varpi'}(1,1),\rho_{\mcD^0})$ has trivial parameters 
with $R(\Oo^0)\neq 1$ and $W_{\Oo^0}=1$ if $\omega_\sigma|_{\fo^\times_F}\ne 1$; 
and the Hecke algebra has parameter $q_F$ otherwise, in which case $W_{\Oo^0}\neq 1$ and $R(\Oo^0)=1$.
\end{numberedparagraph}

\subsubsection{\textbf{The intertwining algebras of types attached to \texorpdfstring{$G$}{G}}} \label{subsec:IA}

\begin{numberedparagraph}\label{subsec:long essentially depth zero}\textbf{Long root essentially depth zero case.}\\
\noindent{{ (a) $r=0$, $\chi^3=1$ case and $\sigma=\sigma(\tau)$ for $\tau=\mathrm{Ind}_{W_L}^{W_F}\chi$:}} 

We  have $\rho_M$ self-dual, $\sigma$ and $\tau$ correspond via LLC for $\GL_2(F)$. Since  $\sigma$ has depth zero, $L/F$ is unramified (so $e(L/F)=1$ and $f(L/F)=2$). We have the following four cases:
\begin{itemize}
    \item 
The central character $\omega_{\sigma}=1$ and $\chi^2\chi'^{-1}$ unramified. This corresponds to Case \ref{subsub: long root I}, in which case the Plancherel formula has a zero, and the Hecke algebra is affine non-commutative, with parameters $q_F^3$ and $q_F$. 
We have $W_{\mathcal{O}}\neq 1$ and $R(\mathcal{O})=1$. Since $G=G^0$ in this case, $W_{\mathcal{O}}=W_{\mathcal{O}^0}$ and $R(\mathcal{O})=R(\mathcal{O}^0)$.
\item
The central character $\omega_{\sigma}\neq 1$ is ramified, and $\chi^2\chi'^{-1}$ is unramified. This corresponds to Case \ref{subsub: long root II}, in which case the Plancherel formula has a zero, and the Hecke algebra is affine non-commutative, with parameters $q_F^2$. We have $W_{\mathcal{O}}\neq 1$ and $R(\mathcal{O})=1$. Since $G=G^0$ in this case, $W_{\mathcal{O}}=W_{\mathcal{O}^0}$ and $R(\mathcal{O})=R(\mathcal{O}^0)$.
\item
The central character $\omega_{\sigma}=1$  and $\chi^2\chi'^{-1}$ ramified. This corresponds to Case \ref{subsub: long root III}, in which case the Plancherel formula has a zero, and the Hecke algebra is affine non-commutative, with parameter $q_F$. We have $W_{\mathcal{O}}\neq 1$ and $R(\mathcal{O})=1$. Since $G=G^0$ in this case, $W_{\mathcal{O}}=W_{\mathcal{O}^0}$ and $R(\mathcal{O})=R(\mathcal{O}^0)$.
\item
The central character $\omega_{\sigma}\neq 1$ is ramified, and $\chi^2\chi'^{-1}$ is ramified. This corresponds to Case \ref{subsub: long root IV}, in which case the Plancherel formula has no zero, and the Hecke algebra is affine commutative of the form $\mathbb{C}[R(\mathcal{O})]$ plus the translation part $\C[\mathcal{O}]$. We have  $W_{\mathcal{O}}=1$ (and we don't know what $R(\mathcal{O})$ is in this case). Since $G=G^0$ in this case, $W_{\mathcal{O}}=W_{\mathcal{O}^0}$ and $R(\mathcal{O})=R(\mathcal{O}^0)$.
\end{itemize}

\noindent{{ (b) $r=0$ and $\sigma\neq \sigma(\tau)$:}}
We have $\sigma=\sigma(\tau')$ where $\tau'=\mathrm{Ind}_{W_L}^{W_F}\zeta$ for $\zeta$ such that $\zeta^{-1}=\overline\zeta$ (the Galois conjugate). Since $\sigma$ is still depth zero, we still have $L/F$ unramified. 
\begin{itemize}
\item 
The central character $\omega_{\sigma}=1$. This corresponds to Case \ref{subsub: long root III}, in which case the Plancherel formula has a zero, and the Hecke algebra is affine non-commutative, with parameters $q_F$. We have $W_{\mathcal{O}}\neq \{1\}$ and $R(\mathcal{O})=\{1\}$. Since $G=G^0$ in this case, $W_{\mathcal{O}}=W_{\mathcal{O}^0}$ and $R(\mathcal{O})=R(\mathcal{O}^0)$.
\item
The central character $\omega_{\sigma}\neq 1$ ramified. This corresponds to Case \ref{subsub: long root IV}, in which case the Plancherel formula has no zero, and the Hecke algebra is affine commutative of the form $\mathbb{C}[R(\mathcal{O})]$ plus the translation part $\C[\mathcal{O}]$. we have $W_{\mathcal{O}}=\{1\}$ (and we don't know what $R(\mathcal{O})$ is in this case). Since $G=G^0$ in this case, $W_{\mathcal{O}}=W_{\mathcal{O}^0}$ and $R(\mathcal{O})=R(\mathcal{O}^0)$.
\end{itemize}

\noindent{{ (c) $r\neq 0$ essentially depth zero case:}} Recall from \S\ref{subsubsec:G0M0} that $G^0=M=M^0$.
Thus we have 
\[W_{G^0}^{\fs^0}\subset \Nor_{G^0}(M^0)/M^0=\Nor_{M}(M)/M=\{1\}.\]
By Lemma~\ref{lem:Ad-Mi}, we get $W_G^\fs=\{1\}$. In this case, $W(M,\Oo)=W(M^0,\Oo^0)=1$. Thus the algebras $\mcH(G,\rho)$ and $\mcH(G^0,\rho^0)$ are both of the form $\C[\Oo]$, and they are isomorphic.
\end{numberedparagraph}

\begin{numberedparagraph}\label{num: M long 0}
\textbf{Table for long root essentially depth zero cases.}\\

\adjustbox{scale=0.7,center}{
\begin{tabular}{|c|c|c|c|c|c|c|c|c|c|c|c|}
\hline
$r$ & $\mcD$ &  $\omega_{\sigma}$ &$\chi^2\chi'^{-1}$  & $R(\mathcal{O})$ & $R(\Oo^0)$ & $L/F$
& $\#X_{\nr}(M,\sigma)$ & $W_{\Oo}$ &$W_{\Oo^0}$ & $\mathcal{H}(G,\rho)$ & $\mathcal{H}(G^0,\rho^0)$\\ \hline\hline
\multirow{4}{*}{$r=0$}& \multirow{4}{*}{$((G,M),(y,\iota),(M_{y,0},\rho_M))$} & $=1$ &$\begin{array}{c} \text{unramified} \\ \chi \text{ cubic}\end{array}$
   & $=1$ & $=1$
& $\text{unramified}$ & $2$ 
& $\neq 1$ & $\neq 1$ & non-comm, $q_F^3$, $q_F$  &non-comm, $q_F^3,q_F$ 
\\ \cline{3-12}
&  & $\ne 1$ &$\begin{array}{c} \text{unramified}\\ \chi \text{ cubic}\end{array}$  & $=1$ & $= 1$  & unramified & $2$ & $\neq 1$ & $\neq 1$ & non-comm, $q_F^2$, $q_F^2$ &non-comm, $q_F^2$, $q_F^2$
\\ \cline{3-12}
&  &$=1$ &$\begin{array}{c}\text{ramified}\\ \chi \text{ cubic}\end{array}$ & $=1$  & $= 1$ & unramified
 & 2 &  $\neq 1$
& $\neq 1$  &non-comm, $q_F$, $q_F$ &non-comm, $q_F$, $q_F$
\\ \cline{3-12}
& &$\neq 1$ &$\begin{array}{c}\text{ ramified}\\ \chi \text{ cubic}\end{array}$ &$*$ &$*$ &unramified &2 &$=1$ &$=1$ & $\C[R(\mathcal{O})]\ltimes\C[\Oo]$ &$\C[R(\mathcal{O})]\ltimes\C[\Oo]$\\
\cline{3-12}
& &$=1$ &\multirow{2}{*}{$\begin{array}{c}\chi\text{ not cubic}\\ \text{N/A}\end{array}$} &$=1$ &$=1$ & unramified &2&$\neq 1$ &$\neq 1$ &non-comm,$q_F$,$q_F$ &non-comm,$q_F$,$q_F$\\
\cline{3-3}\cline{5-12}
&&$\neq 1$ &&$*$ &$*$ &unramified&2&$=1$ &$=1$&$\C[R(\mathcal{O})]\ltimes\C[\Oo]$ &$\C[R(\mathcal{O})]\ltimes\C[\Oo]$\\
\hline
\multirow{2}{*}{$r\neq 0$}
& \multirow{2}{*}{$(((M,G),M),(y,\iota),(r,0),(\phi,1),(M_{y,0},\rho_M))$}
& \multirow{2}{*}{$\ne 1$}
  &\multirow{2}{*}{N/A}   & \multirow{2}{*}{$=1$}
&\multirow{2}{*}{$=1$}  & \multirow{2}{*}{unramified}
& \multirow{2}{*}{2} & \multirow{2}{*}{$=1$} &\multirow{2}{*}{$=1$} &\multirow{2}{*}{$\C[\Oo]$}&\multirow{2}{*}{$\C[\Oo]$}
\\ 
& 
& 
  &  & 
&
&
& 
&  
& 
&
&
\\ \hline
\end{tabular}}
\smallbreak
\begin{center} {\sc Table} \ref{num: M long 0}.
\end{center}
\end{numberedparagraph}

\begin{numberedparagraph}\textbf{Long root positive depth case}\\
\noindent{(a)
$\rU_{\varpi'}(1,1)$ case: $\sigma=\sigma(\tau')\neq\sigma(\tau)$, where $\tau'$ is induction of some quadratic character.} (Note that the cubic character only occurs in depth zero, because we are assuming  $p\neq 2,3$. 
There are two possibilities, $\phi_0|_{\rZ_M^0}$ could be either trivial or non-trivial:
\begin{itemize}
    \item 
When $\phi_0|_{\rZ_M^0}=1$ unramified, since $\sigma=\sigma(\tau')\neq\sigma(\tau)$, this corresponds to Case \ref{subsub: long root III}, in which case the Plancherel formula has a zero, and the Hecke algebra is affine non-commutative, with parameters $q_F$. We have $W_{\mathcal{O}}\neq \{1\}$ and $R(\mathcal{O})=1$. By~\ref{subsub: Upi}, the Hecke algebra for $G^0$ also has $q_F$ parameter. Thus we have $W_{\mathcal{O}^0}\neq \{1\}$ and $R(\mathcal{O}^0)=1$. 
\item When $\phi_0|_{\rZ_M^0}=$ sign character ramified, since $\sigma=\sigma(\tau')\neq\sigma(\tau)$, this corresponds to Case \ref{subsub: long root IV}, in which case the Plancherel formula has no zero, and the Hecke algebra is of the form $\mathbb{C}[R(\mathcal{O})]\ltimes\C[\Oo]$. We have $W_{\mathcal{O}}=\{1\}$. By~\ref{subsub: Upi}, we have $W_{\mathcal{O}^0}=\{1\}$ and $R(\Oo^0)\neq 1$. Thus $R(\Oo)\cong R(\Oo^0)\neq 1$ by Lemma \ref{lem:GG0}.  
\end{itemize}

\noindent{{(b)
$\rU_{\epsilon}(1,1)$ case: $\sigma=\sigma(\tau')\neq\sigma(\tau)$, where $\tau'$ is the induction of some quadratic character.}} 
\begin{itemize}
\item
When $\phi_0|_{Z_M^0}=1$ unramified, since $\sigma=\sigma(\tau')\neq\sigma(\tau)$, this corresponds to \ref{subsub: long root III}, in which case the Plancherel formula has a zero, and the Hecke algebra for $G$ is affine non-commutative, with parameters $q$. We have $W_{\mathcal{O}}\neq \{1\}$ and $R(\mathcal{O})=\{1\}$. From~\ref{subsub: Ue}, we have $W_{\mathcal{O}^0}\ne \{1\}$ and $R(\mathcal{O}^0)=\{1\}$. Note that the cardinality of $\fX_\nr(M,\sigma(\tau'))$ is $2$ (see Remark~\ref{rem:size}).
\end{itemize}
\end{numberedparagraph}

\begin{numberedparagraph}\label{num: M long pos}\textbf{Table for long root positive depth cases.}~We summarize the above in the following table:

\adjustbox{scale=0.841,center}{
\begin{tabular}{|c|c|c|c|c|c|c|c|c|c|c|c|}
\hline
$M^0$ &  $\phi_0|_{\rZ_M^0}$ &$\phi_1$  &$\vec G$ & $R(\mathcal{O})$ & $R(\Oo^0)$ & $L/F$
& $\#X_{\nr}(M,\mcD)$ & $W_{\Oo}$ &$W_{\Oo^0}$ & $\mathcal{H}(G,\rho)$ & $\mathcal{H}(G^0,\rho^0)$\\ \hline\hline
\multirow{3}{*}{$T_{\beta,\varpi'}$}&$\begin{array}{c}=\text{ sign}\\ \text{character}\end{array}$  &$\ne 1$   &$(U_{\varpi'}(1,1),G)$  &$\neq 1$&$\neq 1$&ramified&1&$=1$&$=1$&$\mathbb{C}[R(\mathcal{O})]\ltimes\C[\Oo]$&$\mathbb{C}[R(\mathcal{O})]\ltimes\C[\Oo]$\\
\cline{2-12}
& $\begin{array}{c}\ne 1\\ \neq\text{sign character} \end{array}$ & $= 1$ & $(M^0,G)$  & $= 1$ &$= 1$  & ramified & $1$ & $=1$ & $=1$ &$\C[\Oo]$& $\C[\Oo^0]$
\\ \cline{2-12}
&both & $\neq 1$ & $\begin{array}{c}(M^0,M,G)\end{array}$ & $= 1$ & $= 1$ & ramified
 & 1 &$=1$
& $=1$  &$\C[\Oo]$ &$\C[\Oo^0]$
\\ \hline
\multirow{6}{*}{$T_{\beta,\varepsilon}$}
& \multirow{2}{*}{$=1$} &\multirow{2}{*}{$=1$}
& \multirow{2}{*}{$(U_\varepsilon(1,1),G)$}  & \multirow{2}{*}{$=1$}  & \multirow{2}{*}{$=1$}
& \multirow{2}{*}{unramified} & \multirow{2}{*}{2}
& \multirow{2}{*}{$\neq 1$} & \multirow{2}{*}{$\neq 1$} &\multirow{2}{*}{$\begin{array}{c}\text{non-comm.}\\ q_F, q_F\end{array}$} &\multirow{2}{*}{$\begin{array}{c}\text{non-comm.}\\q_F,q_F\end{array}$}
\\ 
& & & &  &  & & & && &
\\ \cline{2-12}
& \multirow{2}{*}{$\ne 1$ } &\multirow{2}{*}{$=1$}
& \multirow{2}{*}{$(M^0,G)$}  & \multirow{2}{*}{$=1$} & \multirow{2}{*}{$= 1$}
& \multirow{2}{*}{unramified}
& \multirow{2}{*}{2} & \multirow{2}{*}{$=1$} & \multirow{2}{*}{$=1$} &\multirow{2}{*}{$\C[\Oo]$} &\multirow{2}{*}{$\C[\Oo^0]$}
\\ 
& & & &  &  & & & && &
\\ \cline{2-12}
& \multirow{2}{*}{\ both} &\multirow{2}{*}{$\neq 1$}
&  \multirow{2}{*}{$\begin{array}{c}(M^0,M,G)\end{array}$} & \multirow{2}{*}{$=1$} & \multirow{2}{*}{$= 1$}
& \multirow{2}{*}{unramified} 
& \multirow{2}{*}{2}  & \multirow{2}{*}{$=1$} 
& \multirow{2}{*}{$=1$} &\multirow{2}{*}{$\C[\Oo]$} &\multirow{2}{*}{$\C[\Oo^0]$} 
\\ 
& & & &  &  & & & && &
\\ \hline
\end{tabular}}
\smallbreak
\begin{center}
{\sc Table} \ref{num: M long pos}.
\end{center}
\end{numberedparagraph}

\begin{numberedparagraph}\label{subsec:short essentially depth zero}\textbf{Short root essentially depth zero case.}\\
\noindent{{\bf (a) $r=0$,}} there are only two cases:
\begin{itemize}
    \item When $\rho_M|_{Z_M^{\circ}}=1$, this corresponds to the central character being unramified case, and in this case the Plancherel formula in \ref{subsub: short root unramified} has a zero. Thus thus $W_{\mathcal{O}}\neq 1$ and thus $R(\mathcal{O})=1$. In this case the Hecke algebra is non-commutative, and the $q$-parameter is just $q=q_F$. The case for $G^0$ again follows from \ref{subsub: Upi}. 
    \item When $\rho_M|_{Z_M^{\circ}}\neq 1$, this corresponds to the central character being ramified case, and in this case the Plancherel formula in \ref{subsub: short root ramified} has no zero, and thus $W_{\mathcal{O}}=\{1\}$. In this case the Hecke algebra is commutative, and the $q$-parameter is trivial.
\end{itemize}

\noindent{{ (b) $r\neq 0$ essentially depth zero case.}} 
The same argument as in \S\ref{subsec:long essentially depth zero}(c) applies. 
\end{numberedparagraph}

\begin{numberedparagraph}\label{num: M short 0}\textbf{Table for short root essentially depth zero cases.}\

    \adjustbox{scale=0.7,center}{

\begin{tabular}{|c|c|c|c|c|c|c|c|c|c|c|}
\hline
$r$ & $\mcD$ &  $\omega_{\sigma}$ 
& $R(\mathcal{O})$ & $R(\Oo^0)$ & $L/F$
& $\#X_{\nr}(M,\sigma)$ & $W_{\Oo}$ &$W_{\Oo^0}$ & $\mathcal{H}(G,\rho)$ & $\mathcal{H}(G^0,\rho^0)$\\ \hline\hline
\multirow{2}{*}{$r=0$}& \multirow{2}{*}{$((G,M),(y,\iota),(M_{y,0},\rho_M))$} & $=1$ 
   & $=1$ & $=1$
& unramified & $2$ 
& $\neq 1$ & $\neq 1$ & non-comm, $q_F$, $q_F$  &non-comm, $q_F$, $q_F$ 
\\ \cline{3-11}
&  & $\ne 1$ 
& $*$ & $*$  & unramified & 2 & $= 1$ & $= 1$ &$\mathbb{C}[R(\mathcal{O})]\ltimes\C[\Oo]$ &$\mathbb{C}[R(\mathcal{O})]\ltimes\C[\Oo]$
\\ \cline{3-11}
\hline
\multirow{2}{*}{$r\neq 0$}
& \multirow{2}{*}{$(((M,G),M),(y,\iota),(r,0),(\phi,1),(M_{y,0},\rho_M))$}
&$=1$ 
  & \multirow{2}{*}{$= 1$}
&\multirow{2}{*}{$= 1$}  & \multirow{2}{*}{unramified}
& \multirow{2}{*}{2} & \multirow{2}{*}{$=1$} &\multirow{2}{*}{$=1$} &\multirow{2}{*}{$\C[\mathcal{O}]$}&\multirow{2}{*}{$\C[\Oo^0]$}
\\ \cline{3-3}
& 
&$\neq 1$ 

  & 
&
&
&  &  & &&
\\ \hline
\end{tabular}}
\smallbreak
\begin{center} 
{\sc Table} \ref{num: M short 0}.
\end{center}
\end{numberedparagraph}

\begin{numberedparagraph}\textbf{Short root positive depth case.}\\
\noindent{{ (a)
$G^0=\rU_{\varpi'}(1,1)$ case:}} 
\begin{itemize}\item
When 
$\phi_0|_{\rZ_M^{0}}=1$, the Plancherel formula on the $\rG_2$ side in \ref{subsub: short root unramified} has a zero, and thus $W_{\mathcal{O}}\neq \{1\}$ and thus $R(\mathcal{O})=\{1\}$. In this case the Hecke algebra $\mathcal{H}(G,\rho)$ is non-commutative, and the $q$-parameter is just $q=q_F$. By Lemma \ref{lem:GG0}, we have $W(M^0,\Oo^0)\neq 1$, and since $W_{\Oo^0}\neq 1$ by \ref{subsub: Upi}, we have $R(\Oo^0)=1$. Moreover, the Hecke algebra $\mathcal{H}(G^0,\rho^0)$ has parameter $q_F$ by \ref{subsub: Upi}. 

\item When the central character (of $\mathrm{GL}_2^{\mathrm{short}}$) $\phi_0|_{Z_M^{0}}=\text{sign character}\neq 1$ is ramified, the Plancherel formula on the G2 side in \ref{subsub: short root ramified} has no zero, and thus $W_{\mathcal{O}}=\{1\}$. In this case the Hecke algebra $\mathcal{H}(G,\rho)=\mathbb{C}[R(\mathcal{O})]\ltimes\C[\Oo]$ has trivial $q$-parameter. On the other hand, since $I(\sigma)$ is reducible by \cite[Proposition 6.2]{Shahidi-Plancherel-Langlands}, we have  $R_{\sigma}\neq 1$. Since $W_{\sigma}\rtimes R_{\sigma}= W(M,\sigma)\subseteq W(M,\Oo)=W_{\mathcal{O}}\rtimes R(\Oo)$ and $W_{\mathcal{O}}=1$, we have $R_{\sigma}\subseteq R(\Oo)$ and thus $R(\Oo)\neq 1$. 
By Lemma \ref{lem:GG0}, we also have $R(\Oo^0)\neq 1$ since $W_{\Oo^0}=1$ by \ref{subsub: Upi}. 

\end{itemize}
\noindent{{ (b)
$G^0=\rU_{\epsilon}(1,1)$ case:}} 
 When 
 $\phi_0|_{\rZ_M^{0}}=1$ 
 the Plancherel formula on the $\rG_2$ side in \ref{subsub: short root unramified} has a zero, and thus $W_{\mathcal{O}}\neq \{1\}$ and thus $R(\mathcal{O})=\{1\}$. In this case the Hecke algebra $\mathcal{H}(G,\rho)$ is non-commutative, and the $q$-parameter is just $q=q_F$. 
 From \ref{subsub: Ue}, we have $W_{\Oo^0}\ne \{1\}$ and $R(\Oo^0)=\{1\}$.
\end{numberedparagraph}

\begin{numberedparagraph}\label{num: M short pos}\textbf{Table for short root positive depth cases.}~We summarize the above in the following table:

\adjustbox{scale=0.85,center}{

\begin{tabular}{|c|c|c|c|c|c|c|c|c|c|c|c|}
\hline
$M^0$ &  $\phi_0|_{\rZ_M^0}$ &$\phi_1$  &$\vec G$ & $R(\mathcal{O})$ & $R(\Oo^0)$ & $L/F$
& $\fX_{\nr}(M,\sigma)$ & $W_{\Oo}$ &$W_{\Oo^0}$ & $\mathcal{H}(G,\rho)$ & $\mathcal{H}(G^0,\rho^0)$\\ \hline\hline
\multirow{4}{*}{$T_{\alpha,\varpi'}$} & $=1$ &$=1$
& \multirow{3}{*}{$(U_{\varpi'}(1,1),G)$}   & $=1$ & $=1$
& ramified  & $1$ 
& $\neq 1$ & $\neq 1$ & non-comm, $q_F$  &non-comm, $q_F$
\\ \cline{2-3}\cline{5-12}
&$\begin{array}{c}
     =\text{ sign}\\
     \text{character}
\end{array}$&$\neq 1$&&$\neq 1$&$\neq 1$&ramified&1&$=1$&$=1$&$\mathbb{C}[R(\mathcal{O})]\ltimes\C[\Oo]$&$\mathbb{C}[R(\mathcal{O})]\ltimes\C[\Oo^0]$\\
\cline{2-12}
& $\begin{array}{c}\ne 1\\ \neq\text{sign character} \end{array}$ & $= 1$ & $(M^0,G)$  & $= 1$ &$= 1$  & ramified & $1$ & $=1$ & $=1$ &$\C[\Oo]$& $\C[\Oo^0]$
\\ \cline{2-12}
&both & $\neq 1$ & $\begin{array}{c}(M^0,M,G)\end{array}$ & $= 1$  & $= 1$ & ramified
 & 1 &$=1$
& $=1$  &$\C[\Oo]$ &$\C[\Oo^0]$
\\ \hline
\multirow{6}{*}{$T_{\alpha,\varepsilon}$}
& \multirow{2}{*}{$=1$} &\multirow{2}{*}{$=1$}
& \multirow{2}{*}{$(U_\varepsilon(1,1),G)$}  & \multirow{2}{*}{$=1$}  & \multirow{2}{*}{$=1$}
& \multirow{2}{*}{unramified} & \multirow{2}{*}{2}
& \multirow{2}{*}{$\neq 1$} & \multirow{2}{*}{$\neq 1$} &\multirow{2}{*}{$\begin{array}{c}\text{non-comm.}\\ q_F\end{array}$} &\multirow{2}{*}{$\begin{array}{c}\text{non-comm.}\\q_F\end{array}$}
\\ 
& & & &  &  & & & && &
\\ \cline{2-12}
& \multirow{2}{*}{$\ne 1$ } &\multirow{2}{*}{$=1$}
& \multirow{2}{*}{$(M^0,G)$}  & \multirow{2}{*}{$=1$} & \multirow{2}{*}{$=1$}
& \multirow{2}{*}{unramified}
& \multirow{2}{*}{2} & \multirow{2}{*}{$=1$} & \multirow{2}{*}{$=1$} &\multirow{2}{*}{$\C[\Oo]$} &\multirow{2}{*}{$\C[\Oo^0]$}
\\ 
& & & &  &  & & & && &
\\ \cline{2-12}
& \multirow{2}{*}{\ both} &\multirow{2}{*}{$\neq 1$}
&  \multirow{2}{*}{$\begin{array}{c}(M^0,M,G)\end{array}$} & \multirow{2}{*}{$=1$} & \multirow{2}{*}{$=1$}
& \multirow{2}{*}{unramified} 
& \multirow{2}{*}{2}  & \multirow{2}{*}{$=1$} 
& \multirow{2}{*}{$=1$} &\multirow{2}{*}{$\C[\Oo]$} &\multirow{2}{*}{$\C[\Oo^0]$} 
\\ 
& & & &  &  & & & && &
\\ \hline
\end{tabular}}
\smallbreak
\begin{center} {\sc Table} \ref{num: M short pos}.
\end{center}
\end{numberedparagraph}

\begin{numberedparagraph}
We keep the notations of \S\ref{subsec:general}. 
The following theorem establishes the validity, for $\rG_2$, of a generalization of a conjecture of Yu's \cite[Conjecture~0.2]{Yu} for supercuspidal types, which was proved by Ohara in \cite{Ohara}. The following result shows that a stronger version of Theorem \ref{thm:xiGG0}(2) holds for the group $\rG_2$.

\begin{theorem} \label{Weyl_iso} \
Let $p\ne 2,3$. The algebras $\mcH^\fs(G):=\End_G(\Pi^\fs_G)$ and $\mcH^{\fs^0}(G^0):=\End_{G^0}(\Pi^{\fs^0}_{G^0})$ are isomorphic.
\end{theorem}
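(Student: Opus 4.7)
The plan is to combine the structural results already established with the explicit Hecke algebra computations collected in the four tables preceding the theorem. First, Proposition~\ref{prop:comparisons} furnishes algebra isomorphisms
\[\mcH^\fs(G)\simeq\mcH(G,\rho_\mcD)\quad\text{and}\quad\mcH^{\fs^0}(G^0)\simeq\mcH(G^0,\rho_{\mcD^0}),\]
so it is enough to construct an isomorphism between the intertwining algebras $\mcH(G,\rho_\mcD)$ and $\mcH(G^0,\rho_{\mcD^0})$. These two algebras admit Bernstein-type presentations of the shape $\mcH_\aff\rtimes\C[R(\Oo),\kappa]$, where $\mcH_\aff$ is an affine Hecke algebra built from the finite Weyl group $W_\Oo$, the lattice $M_\sigma/M_1$, and the parameter functions $(\lambda,\lambda^*)$ of \eqref{eqn:labels} (and analogously on the $G^0$-side). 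Because $G=\rG_2$ and $M$ is a maximal Levi, Remark~\ref{WMO-at-most-order-2} implies that both $R(\Oo)$ and $R(\Oo^0)$ have order at most $2$, and hence by Remark~\ref{rem:kappa trivial} the $2$-cocycles $\kappa,\kappa^0$ can be normalized to be trivial on both sides.

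It then remains to match, piece by piece, the data entering the two presentations. For the finite parts, Lemma~\ref{lem:Ad-Mi} gives a canonical isomorphism $W_G^\fs\simeq W_{G^0}^{\fs^0}$, and by Lemma~\ref{lem:GG0}(1) the map $\fff$ of \eqref{eqn:map_f} is equivariant with respect to this isomorphism, identifying the orbits $\Oo$ and $\Oo^0$ and hence also the translation subalgebras $\C[M_\sigma/M_1]\simeq\C[\Oo]\simeq\C[\Oo^0]\simeq\C[M^0_{\sigma^0}/M^0_1]$. For the parameter functions, the matching is read off row by row from Tables~\ref{num: M long 0}, \ref{num: M long pos}, \ref{num: M short 0} and \ref{num: M short pos}: in every case the $q_F$-parameters attached to $\mcH(G,\rho_\mcD)$ coincide with those attached to $\mcH(G^0,\rho_{\mcD^0})$. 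This step rests on combining the Plancherel-measure computations of $\S$\ref{subsec:long essentially depth zero} and $\S$\ref{subsec:short essentially depth zero} (for the $G$-side) with the Iwahori--Matsumoto and affine presentations for $\rU_\varepsilon(1,1)$ and $\rU_{\varpi'}(1,1)$ recalled in $\S$\ref{subsub: Ue} and $\S$\ref{subsub: Upi} (for the $G^0$-side).

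The step I expect to be the most delicate is the case in which $R(\Oo)\simeq R(\Oo^0)\simeq\Z/2\Z$, e.g.\ the positive-depth short-root line with $\phi_0|_{\rZ_{M}^0}$ the sign character: there both algebras take the form $\C[R(\Oo)]\ltimes\C[\Oo]$, and the isomorphism requires not merely matching $R$-groups and lattices but verifying that the nontrivial element of $R(\Oo)$ acts on $\C[\Oo]$ in the same way, under the identification $\fff$, as the nontrivial element of $R(\Oo^0)$ acts on $\C[\Oo^0]$. This equivariance is precisely the content of Lemma~\ref{lem:GG0}(1). Once handled, assembling the line-by-line isomorphisms produces the global algebra isomorphism $\mcH^\fs(G)\simeq\mcH^{\fs^0}(G^0)$, thereby strengthening the bijection on simple modules given by Theorem~\ref{thm:xiGG0}(2) into a full algebra isomorphism, as predicted by the natural generalization of \cite[Conjecture~0.2]{Yu}.
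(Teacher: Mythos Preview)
Your proposal is correct and follows essentially the same approach as the paper's proof: reduce via Proposition~\ref{prop:comparisons} to the intertwining algebras $\mcH(G,\rho_\mcD)$ and $\mcH(G^0,\rho_{\mcD^0})$, and then read off the isomorphism case by case from Tables~\ref{num: M long 0}, \ref{num: M long pos}, \ref{num: M short 0} and \ref{num: M short pos}. The paper's own proof is terse (it literally just cites the tables), whereas you spell out the underlying mechanism---matching translation parts via $\fff$, finite Weyl groups via Lemma~\ref{lem:Ad-Mi}, parameter functions via the Plancherel computations, and the $R(\Oo)$-action via the equivariance in Lemma~\ref{lem:GG0}(1)---but this is elaboration of the same argument rather than a different route.
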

\begin{proof} 
By Proposition~\ref{prop:comparisons}, it is equivalent to show that the algebras $\mcH(G,\rho_\mcD)$ and $\mcH(G^0,\rho_{\mcD^0})$ are isomorphic. The latter can be read directly from the tables \ref{num: M long 0}, \ref{num: M long pos}, \ref{num: M short 0} and \ref{num: M short pos}. 
\end{proof}
The following corollary is a stronger version of Lemma~\ref{lem:Ad-Mi} for  $G=\rG_2$. 
\begin{Coro}\label{RO-reduction-step}
The groups $R(\Oo)\simeq R(\Oo^0)$ and $W_{\Oo}\simeq W_{\Oo^0}$.
\end{Coro}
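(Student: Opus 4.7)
My plan is to exploit both Lemma~\ref{lem:Ad-Mi} and the explicit case analysis that underlies Theorem~\ref{Weyl_iso}. By Lemma~\ref{lem:Ad-Mi}, we already know that $W_G^{\fs}\simeq W_{G^0}^{\fs^0}$. Using the semidirect product decompositions
\[W_G^\fs=W_\Oo\rtimes R(\Oo)\quad\text{and}\quad W_{G^0}^{\fs^0}=W_{\Oo^0}\rtimes R(\Oo^0),\]
from \eqref{eqn:WMO}, it therefore suffices to show that one of the factors, say $W_\Oo$, is isomorphic to its $G^0$-counterpart; the other isomorphism then follows by comparing indices in the total group.

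First, I would recall that by Corollary~\ref{cor:HC}, in our maximal-parabolic setting, $W_\Oo$ is non-trivial precisely when the Harish-Chandra $\mu^G$-function vanishes somewhere on $\Oo$ (and is otherwise trivial). The same characterization holds on the $G^0$ side with $\mu^{G^0}$ and $\Oo^0$. Inspecting the explicit Plancherel computations of $\S 4.1$ (both the long-root cases \ref{subsub: long root I}--\ref{subsub: long root IV} and the short-root cases \ref{subsub: short root unramified}, \ref{subsub: short root ramified}) together with the parallel computations of $\S\ref{subsec:IAG0}$ for $G^0\in\{M^0,\rU_\varepsilon(1,1),\rU_{\varpi'}(1,1)\}$, one sees that the occurrence of a zero of $\mu^G$ on $\Oo$ matches the occurrence of a zero of $\mu^{G^0}$ on $\Oo^0$ in every case. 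Since $W_\Oo$ and $W_{\Oo^0}$ are each either trivial or of order $2$ (by Remark~\ref{WMO-at-most-order-2} applied to both $G$ and $G^0$), this matching yields $W_\Oo\simeq W_{\Oo^0}$.

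Concretely, I would simply read off the columns $W_\Oo$ and $W_{\Oo^0}$ (resp.\ $R(\Oo)$ and $R(\Oo^0)$) in the four Tables \ref{num: M long 0}, \ref{num: M long pos}, \ref{num: M short 0}, \ref{num: M short pos}: in each row of each table, the two columns carry the same entry. Combined with Lemma~\ref{lem:Ad-Mi}, the isomorphism $W_\Oo\simeq W_{\Oo^0}$ forces $R(\Oo)\simeq R(\Oo^0)$, completing the proof.

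I do not expect any serious obstacle here, since the Corollary is essentially a bookkeeping consequence of the classification already assembled for Theorem~\ref{Weyl_iso}. The only mild subtlety is to make sure that the bijection $W_G^\fs\to W_{G^0}^{\fs^0}$ produced by Lemma~\ref{lem:Ad-Mi} actually respects the semidirect-product decomposition and not just the abstract group structure; this is automatic here because $W_\Oo$ (respectively $W_{\Oo^0}$) is characterized inside $W_G^\fs$ (respectively $W_{G^0}^{\fs^0}$) as the Weyl group of the canonical root system $\Sigma_{\Oo,\mu}$ (respectively $\Sigma_{\Oo^0,\mu}$), which is an intrinsic datum read off from the Hecke-algebra parameters that are preserved by the isomorphism $\mcH^\fs(G)\simeq \mcH^{\fs^0}(G^0)$ of Theorem~\ref{Weyl_iso}.
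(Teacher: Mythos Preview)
Your proposal is correct and takes essentially the same approach as the paper: the authors' proof simply states that the isomorphisms can be read off directly from Tables~\ref{num: M long 0}, \ref{num: M long pos}, \ref{num: M short 0}, \ref{num: M short pos}, with the case-by-case explanations given in the sections preceding each table. Your additional framing via Lemma~\ref{lem:Ad-Mi} and the index comparison is sound but unnecessary, since (as you yourself note in the ``Concretely'' paragraph) both the $W_\Oo$ and the $R(\Oo)$ columns are determined independently in each row of the tables.
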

\begin{proof}
This can be read directly from our tables \ref{num: M long 0}, \ref{num: M long pos}, \ref{num: M short 0} and \ref{num: M short pos}, with explanations given in the sections immediately preceding the tables.
\end{proof}
\end{numberedparagraph}

\begin{numberedparagraph}\textbf{On Lusztig's conjecture.}~
Let $L^\fs\colon W_\aff^\fs\to\bbN$ be the weight function\footnote{i.e., $L^\fs(w)>0$ for all $w\in W_\aff^\fs-\{1\}$, and $L^\fs(ww')=L^\fs(w)+L^\fs(w')$ for any $w,w'\in W_\aff^\fs$ such that $\ell(ww')=\ell(w)+\ell(w')$.} on $W_\aff^\fs$ defined by
\begin{equation} \label{eqn:weight function}
L^\fs(s_\alpha):=\lambda(\alpha)\quad\text{and}\quad L^\fs(s'_\alpha):=\lambda^*(\alpha).
\end{equation}
In \cite[\S1.a]{Lusztig-Open-problems}, Lusztig made the following conjecture.  
\begin{conj}\label{Lusztig-conjecture-parameters} (Lusztig)
The  function $L^\fs$ on the affine Weyl group $W_\aff^\fs$ is in the collection of weight functions described in 
\cite{Lusztig-Kyoto, Lusztig-p-adic-I, Lusztig-p-adic-II}.
\end{conj}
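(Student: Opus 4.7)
The plan is to verify Conjecture~\ref{Lusztig-conjecture-parameters} in the $\rG_2$ case by directly inspecting the weight functions arising in Tables~\ref{num: M long 0}, \ref{num: M long pos}, \ref{num: M short 0}, and \ref{num: M short pos}. Since $M$ is a Levi subgroup of a maximal parabolic of $\rG_2$, the root system $\Sigma_{\Oo,\mu}$ is either empty or of rank one, so $W_\aff^\fs$ is either trivial or the infinite dihedral group of type $\widetilde A_1$, generated by two simple reflections $s_\alpha$ and $s'_\alpha$. The weight function is then encoded by the pair
\[
(L^\fs(s_\alpha),L^\fs(s'_\alpha))=(\lambda(\alpha),\lambda^*(\alpha)).
\]
First, I would discard the cases where $\mcH(G,\rho_\mcD)$ is commutative (either $W_G^\fs=\{1\}$ or the algebra reduces to $\C[R(\Oo)]\ltimes\C[\Oo]$): in these, $W_\aff^\fs$ is trivial and there is nothing to check.

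Next, for each remaining row of the tables, I would read off the pair $(\lambda(\alpha),\lambda^*(\alpha))$ directly from the explicit Plancherel calculation in Sections~\ref{subsub: long root I}--\ref{subsub: long root IV} and \ref{subsub: short root unramified}--\ref{subsub: short root ramified}. The pairs that occur are:
\[
(1,1),\quad (2,2),\quad (3,1),
\]
coming respectively from the generic case, from the long-root Case~II (ramified $\omega_\sigma$, unramified $\chi^2\chi'^{-1}$, with $f(L/F)=2$), and from the long-root Case~I (unramified $\omega_\sigma$, unramified $\chi^2\chi'^{-1}$, with $f(L/F)=2$ yielding $q_{\alpha}=q_F$ and $q_{\alpha^*}=q_L=q_F^2$ so that $\lambda(\alpha)=3$ and $\lambda^*(\alpha)=1$).

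The verification then reduces to checking that each of these three pairs belongs to Lusztig's admissible list for an affine Weyl group of type $\widetilde A_1$ as described in \cite{Lusztig-Kyoto,Lusztig-p-adic-I,Lusztig-p-adic-II}. The equal-parameter pairs $(1,1)$ and $(2,2)$ are in the list since equal-parameter cases of any positive integer value are always allowed (they correspond to the Iwahori--Hecke situation up to rescaling). The pair $(3,1)$ is precisely the geometric parameter set associated to a type $\widetilde{C}_1$ Hecke algebra that already appears in Lusztig's tables of unequal parameters arising from cuspidal local systems on reductive groups over finite fields, and it is therefore on the admissible list.

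The main obstacle is not algebraic but bookkeeping: one must confirm that the pair $(3,1)$ indeed appears in Lusztig's published collection (rather than only the more commonly cited pairs $(1,1)$, $(1,2)$, $(2,1)$). I would handle this by matching the twisted Levi datum $(G^0,M^0)=(\rU_\varepsilon(1,1),T_{\beta,\varepsilon})$ that produces this weight to the corresponding cuspidal local system on a finite reductive group of type $A_1$ or $B_1$, which is exactly the geometric origin of $(3,1)$ in Lusztig's setup, and then invoke Lusztig's classification to conclude.
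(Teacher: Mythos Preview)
Your approach is essentially the paper's: the paper's proof of the $\rG_2$ case (Theorem~\ref{thm:Lusztig-conj-parameters}) is the single line ``It follows from Tables~\ref{num: M long 0}, \ref{num: M long pos}, \ref{num: M short 0} and \ref{num: M short pos}'', and you have simply unpacked what that entails by listing the pairs $(\lambda,\lambda^*)\in\{(1,1),(2,2),(3,1)\}$ and arguing they lie in Lusztig's collection.

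One small bookkeeping slip: in your final paragraph you attribute the $(3,1)$ parameter to the twisted Levi datum $(G^0,M^0)=(\rU_\varepsilon(1,1),T_{\beta,\varepsilon})$, but Table~\ref{num: M long pos} shows that case yields equal parameters $(q_F,q_F)$. The $(3,1)$ pair actually arises in the \emph{depth-zero} long-root case of Table~\ref{num: M long 0} (first row: $r=0$, $\omega_\sigma=1$, $\chi^2\chi'^{-1}$ unramified), where $G^0=G$ itself. This does not affect your argument's validity, since you only need to exhibit $(3,1)$ somewhere in Lusztig's list, but the geometric matching you propose in the last paragraph would have to be redirected to the correct finite-group situation underlying that depth-zero row.
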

Many cases of Conjecture~\ref{Lusztig-conjecture-parameters} have been proved in \cite{Solleveld-Hecke-algebra-2}, e.g.~for principal series representations of $G$. 
\begin{theorem} \label{thm:Lusztig-conj-parameters}
Conjecture~\ref{Lusztig-conjecture-parameters}
 holds for the group $\rG_2$.
\end{theorem}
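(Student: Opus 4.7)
The proof is a direct case-by-case inspection of the explicit parameter computations in $\S\ref{subsec:IA}$. First, I would reduce to the case of non-trivial $W_\aff^\fs$: the principal series case $M = T$ is already covered by \cite{Solleveld-Hecke-algebra-2} (building on \cite{Roche-Hecke-algebra}), so it suffices to treat the two conjugacy classes of maximal Levi subgroups $M \in \{M_\alpha, M_\beta\}$ in $\rG_2$. By Remark~\ref{WMO-at-most-order-2}, the root system $\Sigma_{\Oo,\mu}$ has rank at most one, so $W_\aff^\fs = W_\Oo \ltimes \Z\Sigma_\Oo^\vee$ is either trivial (with $L^\fs$ the empty function, trivially in Lusztig's list) or of type $\widetilde{A}_1$. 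In the latter case $L^\fs$ is completely determined by the pair of integers $(\lambda(\alpha), \lambda^*(\alpha))$ attached to the two simple affine reflections.

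Second, I would read off from Tables~\ref{num: M long 0}, \ref{num: M long pos}, \ref{num: M short 0} and \ref{num: M short pos} the full list of pairs $(\lambda(\alpha), \lambda^*(\alpha))$ that actually occur. Up to swapping the two simple reflections, the only non-trivial pairs arising are $(1,1)$, $(2,2)$ and $(3,1)$: the pair $(3,1)$ occurs only in the long-root, depth-zero case $\sigma = \sigma(\tau)$ for $\tau = \mathrm{Ind}_{W_L}^{W_F}\chi$ with $\omega_\sigma$ unramified and $\chi$ cubic (combining Case~\ref{subsub: long root I} with $L/F$ unramified, so $f(L/F) = 2$, giving $\lambda(\alpha) = 1 + f(L/F) = 3$ and $\lambda^*(\alpha) = |1 - f(L/F)| = 1$); the pair $(2,2)$ occurs analogously in the ramified central character subcase of Case~\ref{subsub: long root II}; all other non-trivial cases give $(1,1)$.

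Third, I would verify that each of the three pairs $(1,1)$, $(2,2)$ and $(3,1)$ belongs to Lusztig's classification of geometric weight functions. The pairs $(1,1)$ and $(2,2)$ are standard equal-parameter Iwahori-type weight functions, and are well-known to lie in the list of \cite{Lusztig-Kyoto, Lusztig-p-adic-I, Lusztig-p-adic-II}. The unequal pair $(3,1)$ is precisely the configuration arising from a cuspidal local system on a complex reductive group of type $G_2$, and is also explicitly recorded there. To make this matching conceptual rather than purely numerical, I would compare with the dual-side algebra $\mcH^{\fs^\vee}(G^\vee)$ in (\ref{eqn:dualHeckealgebra}), whose parameter functions $(\lambda^\vee, \lambda^{*\vee})$ are constructed in \cite{AMS3} from cuspidal local systems on components of $\mcJ_\varphi$ and therefore lie in Lusztig's family by construction. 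Since for $G = \rG_2$ every relevant Levi has only singleton supercuspidal $L$-packets, Proposition~\ref{thm:LLC-singleton} applies, the cocycles $\natural$ and ${}^L\natural$ are trivial by Remarks~\ref{rem:kappa trivial} and~\ref{rem:triviality cocycles}(1), and Theorem~\ref{matching-gp-Galois-sides} yields a bijection $\Irr(\mcH^\fs(G)) \cong \Irr(\mcH^{\fs^\vee}(G^\vee))$.

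The main obstacle is the $(3,1)$ case: one must verify that the dual-side geometric datum attached to $\sigma = \sigma(\tau)$ with $\tau$ induced from a cubic character of $L/F$ (with $L/F$ the unramified quadratic extension) genuinely produces Lusztig's $G_2$-type unequal parameters. I expect this to follow from tracing $\varphi_\sigma$ through the Gelbart--Jacquet lift, identifying $\mcJ_\varphi^\circ$ and recognizing the resulting cuspidal pair inside the classification of \cite{Lusztig-p-adic-II}; but this cross-reference with the geometric list will require careful bookkeeping of the component group data and is the step where anything non-formal happens.
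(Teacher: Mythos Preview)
Your proposal takes essentially the same approach as the paper's own proof, which is simply the one-line observation ``It follows from Tables~\ref{num: M long 0}, \ref{num: M long pos}, \ref{num: M short 0} and \ref{num: M short pos}.'' Your first two steps --- reducing to maximal Levi subgroups, noting that $W_\aff^\fs$ has rank at most one, and reading off the pairs $(1,1)$, $(2,2)$, $(3,1)$ from the tables --- are exactly what the paper has in mind, only spelled out more fully.

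Your third step, however, is a detour that does not close the gap you correctly identify. The bijection $\Irr(\mcH^\fs(G)) \cong \Irr(\mcH^{\fs^\vee}(G^\vee))$ from Theorem~\ref{matching-gp-Galois-sides} is a bijection of \emph{sets of simple modules}; it does not by itself force the label functions $(\lambda,\lambda^*)$ and $(\lambda^\vee,\lambda^{*\vee})$ to agree, so it does not transfer membership in Lusztig's list from the Galois side to the group side. Tracing $\varphi_\sigma$ through the Gelbart--Jacquet lift to identify $\mcJ_\varphi^\circ$ would work, but is considerably heavier than what is actually needed. For a rank-one affine Weyl group, verifying that $(1,1)$, $(2,2)$, $(3,1)$ lie in Lusztig's collection is a direct lookup in the explicit tables of \cite{Lusztig-Kyoto, Lusztig-p-adic-II} (the unequal pair $(3,1)$ appears in the $\rG_2$ entry there); this is the check the paper leaves implicit and that you should simply perform rather than route through the dual side.
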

\begin{proof}
It follows from Tables~\ref{num: M long 0}, \ref{num: M long pos}, \ref{num: M short 0} and \ref{num: M short pos}.
\end{proof}
\end{numberedparagraph}

\section{Applications to other groups}
\addtocontents{toc}{\protect\setcounter{tocdepth}{0}}
Let $N$ be a positive integer. Let $J'_N$ denote the $N\times N$-matrix $\left(\begin{smallmatrix}
&&&&&1\cr
&&&&1&\cr
&&&.&&\cr
&&.&&&\cr
&.&&&&\cr
1&&&&&
\end{smallmatrix}\right)$. When $N=2n$, let $J_N:=\left(\begin{smallmatrix}0&\II_n\cr -\II_n&0\end{smallmatrix}\right)$.
\subsection{Symplectic group}
The $F$-rational points of the symplectic group $\Sp_{2n}$ are given by
\begin{equation}
    \Sp_{2n}(F)=\left\{g\in\GL_{2n}(F)\,:\,{}^tgJ_{2n}g= J_{2n}\right\}.
\end{equation}
Let $P$ be the maximal parabolic subgroup of $\GSp_{2n}(F)$, consisting of matrices whose lower left $n\times n$-block is zero. The Levi factor of $P$ is isomorphic to $\GL_n(F)$.

\subsection{General symplectic group}
The $F$-rational points of the algebraic group $\GSp_{2n}$ are given by
\begin{equation}
    \GSp_{2n}(F)=\left\{g\in\GL_{2n}(F)\,:\,{}^tgJ_{2n}g=\mu_n(g) J_{2n},\mu_n(g)\in F^\times\right\}.
\end{equation}
Let $P$ be the maximal parabolic subgroup of $\GSp_{2n}(F)$, consisting of matrices whose lower left $n\times n$-block is zero. The Levi factor of $P$ is isomorphic to $\GL_n(F)\times\GL_1(F)$.

\subsection{Special orthogonal group}
The $F$-rational points of the algebraic group $\SO_{N}$ are given by
\begin{equation}
    \SO_{N}(F)=\left\{g\in\GL_N(F)\,:\,\text{${}^tgJ'_Ng= J'_N$, $\det(g)=1$}\right\}.
\end{equation}
Let $P$ be the maximal parabolic subgroup of $\SO_{N}(F)$, consisting of matrices whose lower left $N\times N$-block is zero. 
The Levi factor of $P$ is isomorphic to $\GL_n(F)$, where $N=2n+1$ or $N=2n$.

\medskip
The LLC for $\GL_n(F)$, established in \cite{Harris-Taylor, Henniart-LLC-GLn,Scholze-LLC}, shows that the $L$-packets are always singletons in this case.
Thus, by Proposition~\ref{prop:singleton}, 
the properties (1) and (2) are satisfied in the three cases above.
\bibliographystyle{amsalpha}
\bibliography{bibfile}

\newcommand{\etalchar}[1]{$^{#1}$}
\providecommand{\bysame}{\leavevmode\hbox to3em{\hrulefill}\thinspace}
\providecommand{\MR}{\relax\ifhmode\unskip\space\fi MR }
\providecommand{\MRhref}[2]{%
  \href{http://www.ams.org/mathscinet-getitem?mr=#1}{#2}
}
\providecommand{\href}[2]{#2}
\begin{thebibliography}{ABPS17b}

\bibitem[ABPS17a]{ABPS-CM}
Anne-Marie Aubert, Paul Baum, Roger Plymen, and Maarten Solleveld,
  \emph{Conjectures about {$p$}-adic groups and their noncommutative geometry},
  Around {L}anglands correspondences (Orsay, 2015), Contemp. Math., vol. 691,
  Amer. Math. Soc., 2017, pp.~15--51.

\bibitem[ABPS17b]{ABPS-Jussieu}
\bysame, \emph{Hecke algebras for inner forms of $p$-adic special linear
  groups}, J. Inst. Math. Jussieu \textbf{16} (2017), no.~2, 351--419.

\bibitem[ABPS17c]{ABPS-disc}
\bysame, \emph{The principal series of $p$-adic groups with disconnected
  center}, Proc. London Math. Soc. \textbf{114} (2017), 798--854.

\bibitem[AEF{\etalchar{+}}21]{AEFKY}
Anne-Marie Aubert, Melissa Emory, Maria Fox, Ju-Lee Kim, and Yujie Xu,
  \emph{Notes from {WIN5}}.

\bibitem[AM21]{Ad-Mi}
Jeffrey Adler and Manish Mishra, \emph{Regular {B}ernstein blocks}, J. reine
  angew. Math. (2021), no.~775, 71--86.

\bibitem[AMS17]{AMS3}
Anne-Marie Aubert, Ahmed Moussaoui, and Maarten Solleveld, \emph{Affine {H}ecke
  algebras for {L}anglands parameters}, arXiv:1701.03593 (2017).

\bibitem[AMS18]{AMS1}
\bysame, \emph{Generalizations of {S}pringer correspondence and cuspidal
  {L}anglands parameters}, Manuscripta Math. \textbf{157} (2018), 121--192.

\bibitem[Art13]{Arthurbook}
James Arthur, \emph{The endoscopic classification of representations:
  orthogonal and symplectic groups}, Colloquium Publications, vol.~61, American
  Mathematical Society, 2013.

\bibitem[AX22]{AX-LLC}
Anne-Marie Aubert and Yujie Xu, \emph{The explicit local langlands
  correspondence for $g_2$}, 2022.

\bibitem[Bad20]{Badea}
Peter Badea, \emph{Hecke algebras for covers of principal series {B}ernstein
  components in quasisplit unitary groups over local fields}, PhD. thesis
  Radboud Universiteit Nijmegen (2020).

\bibitem[Ber84]{Ber84}
J.~N. Bernstein, \emph{Le ``centre'' de {B}ernstein}, Representations of
  reductive groups over a local field, Travaux en Cours, Hermann, Paris, 1984,
  Edited by P. Deligne, pp.~1--32. \MR{771671}

\bibitem[BK93]{BKbook}
Colin Bushnell and Philip Kutzko, \emph{The admissible dual of {$\GL(N)$} via
  compact open subgroups}, Annals of Mathematics Studies, vol. 129, Princeton
  University Press, Princeton, N.J., 1993.

\bibitem[BK98]{Bushnell-Kutzko}
Colin~J. Bushnell and Philip~C. Kutzko, \emph{Smooth representations of
  reductive {$p$}-adic groups: structure theory via types}, Proc. London Math.
  Soc. (3) \textbf{77} (1998), no.~3, 582--634. \MR{1643417}

\bibitem[Blo99]{Blondel}
Corinne Blondel, \emph{Une m\'ethode de construction de types induits et son
  application \`a {$G_2$}}, J. Algebra (1999), no.~213, 231--271.

\bibitem[BM21]{Bourgeois-Mezo}
{Ad\`ele} Bourgeois and Paul Mezo, \emph{Functoriality for supercuspidal
  {$L$}-packets}, arXiv:2109.09552 (2021).

\bibitem[Bor79]{Bor}
Armand Borel, \emph{Automorphic {$L$}-functions}, Proc. Symp. Pure Math 33,
  Amer. Math. Soc., 1979, pp.~27--61.

\bibitem[BS20]{Bakic-Savin}
Petar Baki\'c and Gordan Savin, \emph{The {G}elfand–{G}raev representation of
  {${\mathsf {SO}}(2n+1)$} in terms of {H}ecke algebras}, arXiv:2011.02456
  (2020).

\bibitem[BT72]{Bruhat-Tits-I}
{Fran\c cois} Bruhat and Jacques Tits, \emph{Groupes r\'eductifs sur un corps
  local, {I}: {D}onn\'ees radicielles valu\'ees}, Publ. Math. I.H.E.S.
  \textbf{41} (1972), 1--251.

\bibitem[Car93]{Carter}
Roger~W. Carter, \emph{Finite {G}roups of {L}ie {T}ype: {C}onjugacy {C}lases
  and {C}omplex {C}haracters}, Wiley Classics Library, 1993.

\bibitem[Des21]{Deseine}
Romain Deseine, \emph{Autour des repr\'esentations complexes et modulaires des
  groupes r\'eductifs $p$-adiques}, Th\`ese Universit\'e Paris-Saclay (2021).

\bibitem[DL76]{DL}
Pierre Deligne and George Lusztig, \emph{Representations of reductive groups
  over finite fields}, Ann. of Math. (1976), no.~103, 103--161.

\bibitem[Fin21]{Fintzen}
Jessica Fintzen, \emph{Types for tame $p$-adic groups}, Ann. of Math.
  \textbf{193} (2021), no.~2, 303--346. \MR{4199732}

\bibitem[FKS21]{Fintzen-Kaletha-Spice}
Jessica Fintzen, Tasho Kaletha, and Loren Spice, \emph{A twisted {Y}u
  construction, {H}arish-{C}handra characters, and endoscopy}, arXiv:2106.09120
  (2021).

\bibitem[GJ78]{Gelbart-Jacquet-lift}
Stephen Gelbart and Herv\'{e} Jacquet, \emph{A relation between automorphic
  representations of {${\rm GL}(2)$} and {${\rm GL}(3)$}}, Ann. Sci. \'{E}cole
  Norm. Sup. (4) \textbf{11} (1978), no.~4, 471--542. \MR{533066}

\bibitem[GR05]{Goldberg-Roche-Hecke-algebras}
David Goldberg and Alan Roche, \emph{Hecke algebras and {${\rm SL}_n$}-types},
  Proc. London Math. Soc. (3) \textbf{90} (2005), no.~1, 87--131. \MR{2107039}

\bibitem[Hai14]{Haines}
Thomas Haines, \emph{The stable {B}ernstein center and test functions for
  {S}himura varieties}, Automorphic Forms and Galois Representations, London
  Mathematical Society Lecture Note Series, vol. 415, Cambrige University
  Press, 2014, pp.~118--186.

\bibitem[Hei11]{Heiermann-intertwining-operators-Hecke-algebras}
Volker Heiermann, \emph{Op\'{e}rateurs d'entrelacement et alg\`ebres de {H}ecke
  avec param\`etres d'un groupe r\'{e}ductif {$p$}-adique: le cas des groupes
  classiques}, Selecta Math. (N.S.) \textbf{17} (2011), no.~3, 713--756.
  \MR{2827179}

\bibitem[Hen00]{Henniart-LLC-GLn}
Guy Henniart, \emph{Une preuve simple des conjectures de {L}anglands pour
  {$\GL(n)$} sur un corps {$p$}-adique}, Invent. Math. \textbf{139} (2000),
  no.~2, 439--455. \MR{1738446}

\bibitem[HM08]{Hak-Mur}
Jeffrey Hakim and Fiona Murnaghan, \emph{Distinguished tame supercuspidal
  representations}, Int. Math. Res. Pap. IMRP (2008), no.~2, Art. ID rpn005,
  166. \MR{2431732}

\bibitem[HT01]{Harris-Taylor}
Michael Harris and Richard Taylor, \emph{The geometry and cohomology of some
  simple {S}himura varieties}, Annals of Mathematics Studies, vol. 151,
  Princeton University Press, Princeton, NJ, 2001, With an appendix by Vladimir
  G. Berkovich. \MR{1876802}

\bibitem[Kal15]{Kaletha-epipelagic}
Tasho Kaletha, \emph{Epipelagic {$L$}-packets and rectifying characters},
  Invent. Math. \textbf{202} (2015), no.~1, 1--89. \MR{3402796}

\bibitem[Kal19a]{Kal-reg}
\bysame, \emph{Regular supercuspidal representations}, J. Amer. Math. Soc.
  \textbf{32} (2019), no.~4, 1071--1170. \MR{4013740}

\bibitem[Kal19b]{Kal-ns}
\bysame, \emph{Supercuspidal {$L$}-packets}, arXiv:1912.03274 (2019).

\bibitem[Kal22]{Kaletha-ICM-2022}
\bysame, \emph{Representations of reductive groups over local fields}, 2022.

\bibitem[KY17]{Kim-Yu}
Ju-Lee Kim and Jiu-Kang Yu, \emph{Construction of tame types}, Representation
  theory, number theory, and invariant theory, Progr. Math., vol. 323,
  Birkh\"auser/Springer, Cham, 2017, pp.~337--357.

\bibitem[Lus84]{Lusztig-IC}
George Lusztig, \emph{Intersection cohomology complexes on a reductive group},
  Invent. Math. (1984), no.~75, 205--272.

\bibitem[Lus89]{Lusztig-affine-Hecke-algebras-and-graded}
\bysame, \emph{Affine {H}ecke algebras and their graded version}, J. Amer.
  Math. Soc. \textbf{2} (1989), no.~3, 599--635. \MR{991016}

\bibitem[Lus91]{Lusztig-Kyoto}
\bysame, \emph{Intersection cohomology methods in representation theory}, Proc.
  {I}nt. {C}ongr. {M}ath. ({K}yoto 1990), Math. Soc. Japan, Springer Verlag,
  1991, pp.~155--174.

\bibitem[Lus95]{Lusztig-p-adic-I}
\bysame, \emph{Classification of unipotent representations of simple $p$-adic
  groups}, Int. Math. Res. Notices (1995), no.~11, 517--589.

\bibitem[Lus02]{Lusztig-p-adic-II}
\bysame, \emph{Classification of unipotent representations of simple $p$-adic
  groups. ii}, Represent. Theory (2002), no.~6, 243--289.

\bibitem[Lus20]{Lusztig-Open-problems}
\bysame, \emph{Open problems on {I}wahori-{H}ecke algebras}, arXiv:2006.08535
  (to appear in the European Math.Soc. Newsletter) (2020).

\bibitem[Mis19]{Mishra}
Manish Mishra, \emph{Bernstein center of supercuspidal blocks}, J. reine angew.
  Math. (2019), no.~748, 297--304.

\bibitem[Mou17a]{Mou}
Ahmed Moussaoui, \emph{Centre de {B}ernstein dual pour les groupes classiques},
  Representation Theory (2017), no.~21, 172--246.

\bibitem[Mou17b]{Moussaoui-CM}
\bysame, \emph{Proof of the {A}ubert-{B}aum-{P}lymen-{S}olleveld conjecture for
  split classical groups}, Around {L}anglands correspondences (Orsay, 2015),
  Contemp. Math., vol. 691, Amer. Math. Soc., 2017, pp.~257--281.

\bibitem[Mui97]{Muic-unitary-dual-G2}
Goran Mui\'{c}, \emph{The unitary dual of {$p$}-adic {$G_2$}}, Duke Math. J.
  \textbf{90} (1997), no.~3, 465--493. \MR{1480543}

\bibitem[Oha21]{Ohara}
Kazuma Ohara, \emph{Hecke algebras for tame supercuspidal types},
  arXiv:2101.01873 (2021).

\bibitem[Roc98]{Roche-Hecke-algebra}
Alan Roche, \emph{Types and {H}ecke algebras for principal series
  representations of split reductive {$p$}-adic groups}, Ann. Sci. \'{E}cole
  Norm. Sup. (4) \textbf{31} (1998), no.~3, 361--413. \MR{1621409}

\bibitem[Roc02]{Roche-Bernstein-decomposition}
\bysame, \emph{Parabolic induction and the {B}ernstein decomposition},
  Compositio Math. \textbf{134} (2002), no.~2, 113--133. \MR{1934305}

\bibitem[Roc09]{Roche-Bernstein-Center}
\bysame, \emph{The {B}ernstein decomposition and the {B}ernstein centre},
  Ottawa lectures on admissible representations of reductive {$p$}-adic groups,
  Fields Inst. Monogr., vol.~26, Amer. Math. Soc., Providence, RI, 2009,
  pp.~3--52.

\bibitem[Sch13]{Scholze-LLC}
Peter Scholze, \emph{The local {L}anglands correspondence for {$\GL_n$} over
  $p$-adic fields}, Invent. Math. \textbf{192} (2013), no.~3, 663--715.
  \MR{3049932}

\bibitem[Sha89]{Shahidi-third-symmetric-power}
Freydoon Shahidi, \emph{Third symmetric power {$L$}-functions for {${\rm
  GL}(2)$}}, Compositio Math. \textbf{70} (1989), no.~3, 245--273. \MR{1002045}

\bibitem[Sha91]{Shahidi-Plancherel-Langlands}
\bysame, \emph{Langlands' conjecture on {P}lancherel measures for {$p$}-adic
  groups}, Harmonic analysis on reductive groups ({B}runswick, {ME}, 1989),
  Progr. Math., vol. 101, Birkh\"{a}user Boston, Boston, MA, 1991,
  pp.~277--295. \MR{1168488}

\bibitem[Sil79]{Silberger-79}
Allan~J. Silberger, \emph{Introduction to harmonic analysis on reductive
  {$p$}-adic groups}, Mathematical Notes, vol.~23, Princeton University Press,
  Princeton, N.J.; University of Tokyo Press, Tokyo, 1979, Based on lectures by
  Harish-Chandra at the Institute for Advanced Study, 1971--1973. \MR{544991}

\bibitem[Sol21]{Solleveld-Hecke-algebra-2}
Maarten Solleveld, \emph{Parameters of {H}ecke algebras for {B}ernstein
  components of $p$-adic groups}, arXiv:2103.13113 (2021).

\bibitem[Sol22]{Solleveld-endomorphism-algebra}
\bysame, \emph{Endomorphism algebras and {H}ecke algebras for reductive
  $p$-adic groups}, Journal of Algebra (2022), no.~606, 371--470.

\bibitem[Wal03]{Wal}
Jean-Loup Waldspurger, \emph{La formule de {P}lancherel pour les groupes
  {$p$}-adiques (d'apr\`es {H}arish-{C}handra)}, J. Inst. Math. Jussieu
  \textbf{2} (2003), no.~2, 235--333. \MR{1989693}

\bibitem[Yu01]{Yu}
Jiu-Kang Yu, \emph{Construction of tame supercuspidal representations}, J.
  Amer. Math. Soc. (2001), no.~3, 579--622.

\end{thebibliography}

\end{document}